\newtheorem{theorem}{Theorem}[section]
\newtheorem{lemma}[theorem]{Lemma}
\newtheorem{corollary}[theorem]{Corollary}
\theoremstyle{definition}
\newtheorem{definition}[theorem]{Definition}
\newtheorem{remark}[theorem]{Remark}
\newcommand{\rmnum}[1]{\romannumeral #1}
\newcommand{\Rmnum}[1]{\expandafter\@slowromancap\romannumeral #1@}
\begin{document}
\title{Ram\'{\i}rez's problems and fibers on well approximable set of systems of affine forms}

\author{Bing Li\textsuperscript{a} and Bo Wang\textsuperscript{b,c,}\thanks{Corresponding author}\\
\small \it \textsuperscript{\rm a }School of Mathematics, South China University of Technology,\\
\small \it Guangzhou 510641, China\\
\small \it \textsuperscript{\rm b }School of Mathematics, Sun Yat-sen University,\\
\small \it Guangzhou 510275, China\\
\small \it \textsuperscript{\rm c }School of Mathematics, Jiaying University,\\
\small \it Meizhou 514015, China
}
\date{}
\maketitle
\begin{center}
\begin{minipage}{120mm}
{\small {\bf Abstract.}
We show that badly approximable matrices are exactly those that, for any inhomogeneous parameter, can not be inhomogeneous approximated at every monotone divergent rate, which generalizes Ram\'{\i}rez's result (2018). We also establish some metrical results of the fibers on well approximable set of systems of affine forms, which gives answer to two of Ram\'{\i}rez's problems (2018). Furthermore, we prove that badly approximable systems are exactly those that, can not be approximated at each monotone convergent rate $\psi$. Moreover, we study the topological structure of the set of approximation functions.
}
\end{minipage}
\end{center}

\vskip0.5cm {\small{\bf Key words and phrases} \ Kurzweil's theorem, inhomogeneous Diophantine approximation, well approximable set, badly approximable systems, Baire category theorem}
\footnotetext{\small \it E-mails addresses: \rm scbingli@scut.edu.cn (B. Li), 
math\_bocomeon@163.com (B. Wang).}

\section{Introduction}
\subsection{Notations}
Firstly, we fix our notations in this paper. Let $m,n$ be two positive integers. Denote by $[0,1)^{m\times n}$ the set of all $m\times n$ matrices with entries in $[0,1)$. Given function $\psi:\mathbb{N}\to\mathbb{R}_{\geq0}$, where $\mathbb{R}_{\geq0}=[0,+\infty)$. Define
\begin{equation*}
W_{m,n}(\psi):=\left\{(A,\boldsymbol{\gamma})\in[0,1)^{m\times n}\times[0,1)^{m}:\langle A\boldsymbol{q}-\boldsymbol{\gamma} \rangle<\psi(\|\boldsymbol{q}\|)\ {\rm for\ i.m.}\ \boldsymbol{q}\in\mathbb{Z}^{n}\right\},
\end{equation*}
where $\|\cdot\|$ denotes supremum norm and $\langle\cdot\rangle$ denotes supremum norm distance to $\mathbb{Z}^{m}$, that is,
\begin{equation*}
\|\boldsymbol{x}\|=\max\left\{|x_{1}|,\cdots,|x_{n}|\right\}\ {\rm for\ any}\ \boldsymbol{x}=(x_{1},\cdots,x_{n})\in \mathbb{R}^{n}
\end{equation*}
and
\begin{equation*}
\langle\boldsymbol{y}\rangle=\min\left\{\|\boldsymbol{y}-\boldsymbol{p}\|:\boldsymbol{p}\in\mathbb{Z}^{m}\right\}\ {\rm for\ any}\ \boldsymbol{y}\in \mathbb{R}^{m}.
\end{equation*}
Here and throughout, ``i.m.'' stands for ``infinitely many''. The set $W_{m,n}(\psi)$ is usually called the collection of inhomogeneously $\psi$-approximable pairs in $[0,1)^{m\times n}\times[0,1)^{m}$. The corresponding fibers of $W_{m,n}(\psi)$ denoted by $W_{m,n}^{\boldsymbol{\gamma}}(\psi)$ and $W_{m,n,A}(\psi)$, that is,
\begin{equation*}
W_{m,n}^{\boldsymbol{\gamma}}(\psi):=\{A\in[0,1)^{m\times n}:(A,\boldsymbol{\gamma})\in W_{m,n}(\psi)\}
\end{equation*}
and
\begin{equation*}
W_{m,n,A}(\psi):=\{\boldsymbol{\gamma}\in[0,1)^{m}:(A,\boldsymbol{\gamma})\in W_{m,n}(\psi)\},
\end{equation*}
are called the well approximable set and the covering set respectively. Throughout this paper, for given $l\in\mathbb{N}$, we use $\mu_{l}$ for $l$ dimensional Lebesgue measure. The Lebesgue measure for $W_{m,n}^{\boldsymbol{\gamma}}(\psi)$ is summarized by the following statement, which called inhomogeneous Khintchine-Groshev theorem \cite[Theorem 12/15]{VGS}.
\begin{theorem}\label{01}{\rm(Inhomogeneous Khintchine-Groshev theorem)}
For any $\psi:\mathbb{N}\to\mathbb{R}_{\geq0}$ and $\boldsymbol{\gamma}\in[0,1)^{m}$, we have
\begin{equation*}
\mu_{mn}(W_{m,n}^{\boldsymbol{\gamma}}(\psi))=\begin{cases} 0, &\text{if}\ \sum_{q=1}^{\infty}q^{n-1}\psi(q)^{m}<\infty, \\
1, &\text{if}\ \sum_{q=1}^{\infty}q^{n-1}\psi(q)^{m}=\infty\ {\text and\ \psi\ is\ decreasing}. \end{cases}
\end{equation*}
\end{theorem}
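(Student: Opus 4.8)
The plan is to treat the two halves of the dichotomy separately, in both cases viewing $W_{m,n}^{\boldsymbol{\gamma}}(\psi)$ as a $\limsup$ set. For $\boldsymbol{q}\in\mathbb{Z}^{n}\setminus\{\boldsymbol{0}\}$ put $q=\|\boldsymbol{q}\|$ and $E_{\boldsymbol{q}}:=\{A\in[0,1)^{m\times n}:\langle A\boldsymbol{q}-\boldsymbol{\gamma}\rangle<\psi(q)\}$, so that $W_{m,n}^{\boldsymbol{\gamma}}(\psi)=\limsup_{\boldsymbol{q}}E_{\boldsymbol{q}}$. The first step is the elementary observation that for a fixed nonzero $\boldsymbol{q}$ the map $A\mapsto A\boldsymbol{q}\bmod\mathbb{Z}^{m}$ pushes the (normalised) Lebesgue measure on $[0,1)^{m\times n}$ forward to Haar measure on $\mathbb{T}^{m}$ — one verifies this row by row, the Fourier coefficients of the pushforward all vanishing because some entry of $\boldsymbol{q}$ is nonzero — so that $\mu_{mn}(E_{\boldsymbol{q}})=\min\{2\psi(q),1\}^{m}$; likewise, if $\boldsymbol{q},\boldsymbol{q}'$ are linearly independent over $\mathbb{Q}$ then $A\mapsto(A\boldsymbol{q},A\boldsymbol{q}')\bmod\mathbb{Z}^{2m}$ pushes forward to Haar on $\mathbb{T}^{2m}$, and hence $E_{\boldsymbol{q}}$ and $E_{\boldsymbol{q}'}$ are \emph{exactly} independent.

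For the convergence case I would simply sum: since $\#\{\boldsymbol{q}\in\mathbb{Z}^{n}:\|\boldsymbol{q}\|=q\}\asymp q^{n-1}$, one gets $\sum_{\boldsymbol{q}\neq\boldsymbol{0}}\mu_{mn}(E_{\boldsymbol{q}})\ll\sum_{q\geq1}q^{n-1}\psi(q)^{m}$, and if the latter is finite the Borel--Cantelli lemma gives $\mu_{mn}(W_{m,n}^{\boldsymbol{\gamma}}(\psi))=0$; note that monotonicity of $\psi$ plays no role here, consistent with the stated theorem.

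For the divergence case, assume $\psi$ decreasing and $\sum_{q}q^{n-1}\psi(q)^{m}=\infty$. If $\psi(q)\geq 1/2$ for infinitely many $q$ the conclusion is trivial (infinitely many $E_{\boldsymbol{q}}$ are the whole cube), so assume otherwise; then $\sum_{\boldsymbol{q}}\mu_{mn}(E_{\boldsymbol{q}})\asymp\sum_{q}q^{n-1}\psi(q)^{m}=\infty$. By the divergence Borel--Cantelli lemma it then suffices to establish quasi-independence on average,
\[
\sum_{\|\boldsymbol{q}\|,\|\boldsymbol{q}'\|\leq Q}\mu_{mn}(E_{\boldsymbol{q}}\cap E_{\boldsymbol{q}'})\ \ll\ \Big(\sum_{\|\boldsymbol{q}\|\leq Q}\mu_{mn}(E_{\boldsymbol{q}})\Big)^{2}
\]
with an implied constant independent of $Q$; this produces $\mu_{mn}(W_{m,n}^{\boldsymbol{\gamma}}(\psi))>0$, and a zero--one law of Cassels--Gallagher type — proved via the Lebesgue density theorem, using that the intervals constituting $E_{\boldsymbol{q}}$ are short and equidistributed when $\psi$ is monotone — then upgrades this to full measure. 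By the independence observation above, the linearly independent pairs $(\boldsymbol{q},\boldsymbol{q}')$ contribute at most $\big(\sum_{\|\boldsymbol{q}\|\leq Q}\mu_{mn}(E_{\boldsymbol{q}})\big)^{2}$ to the left-hand side, so the whole problem reduces to the linearly dependent pairs.

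The main obstacle, as in every proof of a Khintchine--Groshev type theorem, is precisely this last contribution, coming from pairs $\boldsymbol{q}=a\boldsymbol{v}$, $\boldsymbol{q}'=b\boldsymbol{v}$ with $\boldsymbol{v}$ primitive and $a,b\in\mathbb{Z}\setminus\{0\}$, and it is the only place where monotonicity of $\psi$ is genuinely used. Here one parametrises by $\boldsymbol{t}=A\boldsymbol{v}\bmod\mathbb{Z}^{m}$ (Haar on $\mathbb{T}^{m}$); the defining event factorises over the $m$ coordinates, and in each coordinate one must bound the measure of the intersection of a union of $|a|$ intervals of length $2\psi(|a|\|\boldsymbol{v}\|)/|a|$ with a union of $|b|$ intervals of length $2\psi(|b|\|\boldsymbol{v}\|)/|b|$. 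Counting which of these intervals overlap (a count governed by $\gcd(a,b)$) yields the ``expected'' independent main term $\psi(|a|\|\boldsymbol{v}\|)\psi(|b|\|\boldsymbol{v}\|)$ together with a $\gcd$-dependent error; monotonicity then lets one replace the value of $\psi$ at the larger argument by its value at the smaller one, so that the resulting weighted triple sum over $\boldsymbol{v},a,b$ with $|a|\|\boldsymbol{v}\|,|b|\|\boldsymbol{v}\|\leq Q$ can be summed to $O\big((\sum_{\|\boldsymbol{q}\|\leq Q}\mu_{mn}(E_{\boldsymbol{q}}))^{2}\big)$; the diagonal $a=b$ only contributes $\sum_{\|\boldsymbol{q}\|\leq Q}\mu_{mn}(E_{\boldsymbol{q}})$, which is absorbed since these partial sums diverge. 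Because the inhomogeneous shift $\boldsymbol{\gamma}$ merely translates all the intervals in sight without changing their lengths, the classical homogeneous computations (Khintchine for $m=n=1$; Groshev, Schmidt, Sprind\v{z}uk in general) carry over with only cosmetic changes — which is the route followed in \cite{VGS}.
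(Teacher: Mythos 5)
The paper does not prove this statement: it is quoted verbatim from \cite[Theorem 12/15]{VGS} (Sprind\v{z}uk) and used as a black box, so there is no in-text proof to compare with. Your sketch is, in outline, the standard second-moment proof that the cited source and its successors carry out, and the structural observations you single out are the right ones: $\mu_{mn}(E_{\boldsymbol{q}})=\min\{2\psi(\|\boldsymbol{q}\|),1\}^{m}$ by pushing Lebesgue forward under $A\mapsto A\boldsymbol{q}\bmod\mathbb{Z}^{m}$; exact independence of $E_{\boldsymbol{q}}$ and $E_{\boldsymbol{q}'}$ for $\mathbb{Q}$-linearly independent $\boldsymbol{q},\boldsymbol{q}'$ (the matrix $\boldsymbol{q}\boldsymbol{p}^{T}+\boldsymbol{q}'(\boldsymbol{p}')^{T}$ vanishes only when $\boldsymbol{p}=\boldsymbol{p}'=\boldsymbol{0}$, so all nontrivial Fourier coefficients of the joint pushforward vanish); Borel--Cantelli with no monotonicity for convergence; and reduction of the quasi-independence estimate to collinear pairs $a\boldsymbol{v},b\boldsymbol{v}$ where the $\gcd(a,b)$-governed overlap count is the only place monotonicity enters. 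The inhomogeneous shift indeed only translates the constituent intervals, so the homogeneous computation transfers.

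Two caveats are worth flagging, both of which you have correctly identified as the heart of the matter but left as black boxes. First, the summation over collinear pairs is the genuinely delicate step: one must show that the accumulated ``incidence error'' over primitive $\boldsymbol{v}$ and over $a,b$ with $|a|\|\boldsymbol{v}\|,|b|\|\boldsymbol{v}\|\leq Q$ is $O\bigl(\bigl(\sum_{\|\boldsymbol{q}\|\leq Q}\mu_{mn}(E_{\boldsymbol{q}})\bigr)^{2}\bigr)$ uniformly in $Q$, and in Sprind\v{z}uk this occupies several pages and is exactly where monotonicity is exploited to replace $\psi$ evaluated at the larger of $|a|\|\boldsymbol{v}\|,|b|\|\boldsymbol{v}\|$ by its value at the smaller; your sketch correctly locates this but does not establish it. Second, the zero--one law you invoke to upgrade positive measure to full measure is not free in the inhomogeneous matrix setting; for $m=n=1$ it is Cassels' theorem, and in general one needs either a suitable invariance/tail argument or a local (all-scales) version of the Paley--Zygmund lower bound together with the Lebesgue density theorem. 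Either route works and is standard, but it must be cited or proved rather than asserted. With those two points filled in, the argument is a correct proof of the theorem along the same lines as the reference the paper cites.
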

For the Hausdorff measure and Hausdorff dimension of $W_{m,n}^{\boldsymbol{\gamma}}(\psi)$, see \cite{BBDV,JL}. The metric results of $W_{m,n,A}(\psi)$ is more complicated than those of $W_{m,n}^{\boldsymbol{\gamma}}(\psi)$, we refer the readers to \cite{YB,AJ,KRW,SJ}.

Throughout this paper, let $\mathcal{D}_{m,n}$ and $\mathcal{C}_{m,n}$ be the sets of all decreasing function $\psi:\mathbb{N}\to\mathbb{R}_{\geq0}$ satisfies $\sum\limits_{q=1}^{\infty}q^{n-1}\psi(q)^{m}$ diverges and converges, respectively. For the sake of simplicity, we will denote $\mathcal{D}_{m,n}$ and $\mathcal{C}_{m,n}$ by $\mathcal{D}$ and $\mathcal{C}$, respectively. Denote by
\begin{equation*}
\Omega(m,n):=\bigcap_{\psi\in\mathcal{D}}W_{m,n}(\psi).
\end{equation*}
Besides, we use $\Omega^{\boldsymbol{\gamma}}(m,n)$ and $\Omega_{A}(m,n)$ to denote the fibers of $\Omega(m,n)$. That is,
\begin{equation*}
\Omega^{\boldsymbol{\gamma}}(m,n)=\{A\in[0,1)^{m\times n}:(A,\boldsymbol{\gamma})\in\Omega(m,n)\}
\end{equation*}
and
\begin{equation*}
\Omega_{A}(m,n)=\{\boldsymbol{\gamma}\in[0,1)^{m}:(A,\boldsymbol{\gamma})\in\Omega(m,n)\}.
\end{equation*}
A dual set to $\Omega(m,n)$ is
\begin{equation*}
\Lambda(m,n)=\bigcup_{\psi\in\mathcal{C}}W_{m,n}(\psi).
\end{equation*}
The corresponding fibers of $\Lambda(m,n)$ denoted by $\Lambda^{\boldsymbol{\gamma}}(m,n)$ and $\Lambda_{A}(m,n)$, that is,
\begin{equation*}
\Lambda^{\boldsymbol{\gamma}}(m,n):=\{A\in[0,1)^{m\times n}: (A,\boldsymbol{\gamma})\in\Lambda_{m,n}\}= \bigcup_{\psi\in\mathcal{C}}W_{m,n}^{\boldsymbol{\gamma}}(\psi)
\end{equation*}
and
\begin{equation*}
\Lambda_{A}(m,n):=\{\boldsymbol{\gamma}\in[0,1)^{m}: (A,\boldsymbol{\gamma})\in\Lambda_{m,n}\}=\bigcup_{\psi\in\mathcal{C}}W_{m,n,A}(\psi).
\end{equation*}
Let us denote
\begin{equation*}
\boldsymbol{\rm Bad}(m,n):=\left\{(A,\boldsymbol{\gamma})\in[0,1)^{m\times n}\times[0,1)^{m}: \liminf\limits_{\boldsymbol{q}\in\mathbb{Z}^{n},\|\boldsymbol{q}\|\to\infty}\|\boldsymbol{q}\|^{n}\langle A\boldsymbol{q}-\boldsymbol{\gamma}\rangle^{m}>0\right\},
\end{equation*}
which is the set of badly approximable systems of $m$ affine forms in $n$ variables. We also denote the fibers of $\boldsymbol{\rm Bad}(m,n)$ by $\boldsymbol{\rm Bad}^{\boldsymbol{\gamma}}(m,n)$ and $\boldsymbol{\rm Bad}_{A}(m,n)$, that is,
\begin{equation*}
\boldsymbol{\rm Bad}^{\boldsymbol{\gamma}}(m,n):=\{A\in[0,1)^{m\times n}: (A,\boldsymbol{\gamma})\in\boldsymbol{\rm Bad}(m,n)\}
\end{equation*}
and
\begin{equation*}
\boldsymbol{\rm Bad}_{A}(m,n):=\{\boldsymbol{\gamma}\in[0,1)^{m}: (A,\boldsymbol{\gamma})\in\boldsymbol{\rm Bad}(m,n)\}.
\end{equation*}
For the metrical results of $\boldsymbol{\rm Bad}^{\boldsymbol{\gamma}}(m,n)$ and $\boldsymbol{\rm Bad}_{A}(m,n)$, see \cite{YHKV,MJ,WMS}.

\subsection{Ram\'{\i}rez's problems: concerning Kurzweil type theorem}
For $\psi:\mathbb{N}\to\mathbb{R}_{\geq0}$, denote $$V_{m,n}(\psi):=\{A\in[0,1)^{m\times n}: \mu_{m}(W_{m,n,A}(\psi))=1\}.$$
Motivated by Steinhaus's question: $$ {\rm whether\ all\ irrational\ numbers\ from\ the\ interval}\ [0,1)\ {\rm belong\ to\ the\ set}\ \bigcap\limits_{\psi\in\mathcal{D}}V_{1,1}(\psi)\ ?$$  In 1955, Kurzweil \cite{JK} established the following theorem.
\begin{theorem}{\rm(\cite[Theorem 5]{JK} )}
\begin{equation*}
\boldsymbol{\rm Bad}^{\boldsymbol{0}}(m,n)=\bigcap\limits_{\psi\in\mathcal{D}}V_{m,n}(\psi).
\end{equation*}
\end{theorem}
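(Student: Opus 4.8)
The plan is to identify $[0,1)^{m}$ with the torus $\mathbb{R}^{m}/\mathbb{Z}^{m}$, to write $B(\boldsymbol{x},r)$ for the ball of radius $r$ in the metric $\langle\,\cdot\,\rangle$, and to note that for a fixed matrix $A$ one has $W_{m,n,A}(\psi)=\limsup_{\|\boldsymbol{q}\|\to\infty}E_{\boldsymbol{q}}$ with $E_{\boldsymbol{q}}:=B\bigl(A\boldsymbol{q},\psi(\|\boldsymbol{q}\|)\bigr)$. Replacing $\psi$ by $\min\{\psi,\tfrac13\}\in\mathcal{D}$ if necessary (which only shrinks $W_{m,n,A}(\psi)$) we may assume $\psi<\tfrac12$, so that $\mu_{m}(E_{\boldsymbol{q}})\asymp\psi(\|\boldsymbol{q}\|)^{m}$ and $\sum_{\boldsymbol{q}}\mu_{m}(E_{\boldsymbol{q}})\asymp\sum_{q\ge1}q^{n-1}\psi(q)^{m}$. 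A preliminary observation is a zero--one law: if $A\in\boldsymbol{\rm Bad}^{\boldsymbol{0}}(m,n)$ then the $\mathbb{Z}^{n}$-action $\boldsymbol{\gamma}\mapsto\boldsymbol{\gamma}+A\boldsymbol{q}$ on $\mathbb{R}^{m}/\mathbb{Z}^{m}$ is ergodic --- otherwise $\boldsymbol{a}^{\mathsf{T}}A\in\mathbb{Z}^{n}$ for some nonzero $\boldsymbol{a}\in\mathbb{Z}^{m}$, which confines every $A\boldsymbol{q}$ to a rational subtorus of dimension $m-1$ and, by Dirichlet's theorem applied in dimension $m-1$, forces $\liminf_{\|\boldsymbol{q}\|\to\infty}\|\boldsymbol{q}\|^{n}\langle A\boldsymbol{q}\rangle^{m}=0$ --- and $W_{m,n,A}(\psi)$ agrees, modulo a null set, with each of its translates under this action; hence $\mu_{m}(W_{m,n,A}(\psi))\in\{0,1\}$. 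It therefore remains, for $A\in\boldsymbol{\rm Bad}^{\boldsymbol{0}}(m,n)$, only to decide whether $\mu_{m}(W_{m,n,A}(\psi))>0$. I would prove the two inclusions in turn.

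\emph{The inclusion $\boldsymbol{\rm Bad}^{\boldsymbol{0}}(m,n)\subseteq\bigcap_{\psi\in\mathcal{D}}V_{m,n}(\psi)$.} Fix $A$ with $\langle A\boldsymbol{q}\rangle\ge c\|\boldsymbol{q}\|^{-n/m}$ for all $\boldsymbol{q}\neq\boldsymbol{0}$, and fix $\psi\in\mathcal{D}$. The key arithmetic input is that the points $A\boldsymbol{q}$ with $\|\boldsymbol{q}\|\le Q$ are pairwise $\gtrsim Q^{-n/m}$-separated, whence for every $Q\ge1$ and $\varepsilon>0$
\[
\#\bigl\{\boldsymbol{v}\in\mathbb{Z}^{n}:0<\|\boldsymbol{v}\|\le Q,\ \langle A\boldsymbol{v}\rangle<\varepsilon\bigr\}\ \lesssim\ \varepsilon^{m}Q^{n}
\]
(a $\gtrsim Q^{-n/m}$-separated set inside a ball of radius $\varepsilon$ has $\lesssim(\varepsilon Q^{n/m})^{m}$ points, and the trivial ``$+1$'' is absorbed because the left-hand side is nonzero only when $\varepsilon\gtrsim Q^{-n/m}$). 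Since $E_{\boldsymbol{q}}\cap E_{\boldsymbol{q}'}\neq\varnothing$ forces $\langle A(\boldsymbol{q}-\boldsymbol{q}')\rangle<\psi(\|\boldsymbol{q}\|)+\psi(\|\boldsymbol{q}'\|)$, decomposing $\boldsymbol{q}$ and $\boldsymbol{q}'$ into dyadic shells and summing the counting bound gives the quasi-independence estimate
\[
\sum_{\|\boldsymbol{q}\|,\|\boldsymbol{q}'\|\le N}\mu_{m}(E_{\boldsymbol{q}}\cap E_{\boldsymbol{q}'})\ \lesssim\ \Bigl(\sum_{\|\boldsymbol{q}\|\le N}\mu_{m}(E_{\boldsymbol{q}})\Bigr)^{2}.
\]
Since $\sum_{\boldsymbol{q}}\mu_{m}(E_{\boldsymbol{q}})\asymp\sum q^{n-1}\psi(q)^{m}=\infty$, the divergence Borel--Cantelli lemma (Kochen--Stone) yields $\mu_{m}(W_{m,n,A}(\psi))>0$, hence $=1$ by the zero--one law; so $A\in V_{m,n}(\psi)$ for every $\psi\in\mathcal{D}$.

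\emph{The inclusion $\bigcap_{\psi\in\mathcal{D}}V_{m,n}(\psi)\subseteq\boldsymbol{\rm Bad}^{\boldsymbol{0}}(m,n)$.} Suppose $A\notin\boldsymbol{\rm Bad}^{\boldsymbol{0}}(m,n)$, i.e.\ $\liminf_{\|\boldsymbol{q}\|\to\infty}\|\boldsymbol{q}\|^{n}\langle A\boldsymbol{q}\rangle^{m}=0$; I would construct a single $\psi\in\mathcal{D}$ with $\mu_{m}(W_{m,n,A}(\psi))=0$, so that $A\notin V_{m,n}(\psi)$. Build $\psi$ as a decreasing step function: recursively choose integers $N_{1}<N_{2}<\cdots$ and ``super-good'' vectors $\boldsymbol{q}_{j}$ with $\|\boldsymbol{q}_{j}\|\ge N_{j}$ and $\|\boldsymbol{q}_{j}\|^{n}\langle A\boldsymbol{q}_{j}\rangle^{m}\le 2^{-j(m+n)}$ (possible since the $\liminf$ vanishes), then take $N_{j+1}$ very large and set $\psi\equiv s_{j}:=N_{j+1}^{-n/m}$ on $[N_{j},N_{j+1})$. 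Each block then contributes $\asymp N_{j+1}^{n}s_{j}^{m}\asymp1$ to $\sum q^{n-1}\psi(q)^{m}$, so $\psi\in\mathcal{D}$. The mechanism is that $\boldsymbol{w}_{j}:=A\boldsymbol{q}_{j}-\boldsymbol{p}_{j}$ (with $\boldsymbol{p}_{j}$ the nearest integer vector to $A\boldsymbol{q}_{j}$) satisfies $\|\boldsymbol{w}_{j}\|=\langle A\boldsymbol{q}_{j}\rangle$, which is minuscule; consequently the centres $A(\boldsymbol{q}+k\boldsymbol{q}_{j})\equiv A\boldsymbol{q}+k\boldsymbol{w}_{j}\pmod{\mathbb{Z}^{m}}$ crawl along very short segments. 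Partitioning the $\boldsymbol{q}$ in block $j$ into such ``chains'' --- each of length $\lesssim N_{j+1}/\|\boldsymbol{q}_{j}\|$, so that there are $\lesssim N_{j+1}^{n-1}\|\boldsymbol{q}_{j}\|$ of them --- and covering the balls of each chain by one thickened segment gives
\[
\mu_{m}\Bigl(\ \bigcup_{\boldsymbol{q}\ \text{in block}\ j}E_{\boldsymbol{q}}\Bigr)\ \lesssim\ \max\Bigl\{\frac{\|\boldsymbol{q}_{j}\|}{N_{j+1}},\ \frac{\langle A\boldsymbol{q}_{j}\rangle}{s_{j}}\Bigr\}.
\]
Choosing $\boldsymbol{q}_{j}$ super-good enough and then $N_{j+1}$ large enough forces both fractions to be $\le2^{-j}$; since these block-covers are then summable, $\mu_{m}\bigl(\limsup_{\boldsymbol{q}}E_{\boldsymbol{q}}\bigr)=0$, as needed.

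I expect the second inclusion to be the more delicate part. The difficulty is that one must arrange \emph{one and the same} function $\psi$ to be both divergent (so its blocks cannot be too short) and to possess a summable sequence of block-covers (so the chains must be long and the within-block overlaps must be genuine), and reconciling these requires a carefully interlocked choice of the three parameter families $N_{j}$, $s_{j}$, $\boldsymbol{q}_{j}$; this is exactly where the failure of badly approximability is used, and it hides the only non-routine geometry, namely the thickened-segment estimate above. The first inclusion, by contrast, is a standard quasi-independent divergence Borel--Cantelli argument once the packing bound is in hand, so the real work there is just to verify that bound and the resulting double sum.
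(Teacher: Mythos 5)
The paper does not prove this statement: it is Kurzweil's 1955 theorem, imported verbatim as a citation of \cite[Theorem 5]{JK}, so there is no ``paper's proof'' to compare against. Judged on its own terms, your two-part strategy --- a separation/quasi-independence argument with divergence Borel--Cantelli for $\boldsymbol{\rm Bad}^{\boldsymbol 0}\subseteq\bigcap V_{m,n}(\psi)$, and a chain cover with a hand-built step function $\psi$ for the converse --- is the right one and is in the same spirit as Kurzweil's original argument. The packing bound $\#\{\boldsymbol{v}\ne\boldsymbol 0:\|\boldsymbol v\|\le Q,\ \langle A\boldsymbol v\rangle<\varepsilon\}\lesssim\varepsilon^m Q^n$, the dyadic summation giving $\sum\mu_m(E_{\boldsymbol q}\cap E_{\boldsymbol q'})\lesssim(\sum\mu_m(E_{\boldsymbol q}))^2$, and the bookkeeping in the converse (the exponent $2^{-j(m+n)}$ is exactly what makes $N_{j+1}\ge2^{j}\|\boldsymbol q_j\|$ compatible with $\langle A\boldsymbol q_j\rangle\le2^{-j}s_j$) all check out, as does the thickened-segment estimate $\lesssim\max\{\|\boldsymbol q_j\|/N_{j+1},\ \langle A\boldsymbol q_j\rangle/s_j\}$ once one argues, as you indicate, via a slightly enlarged box that there are $\lesssim N_{j+1}^{n-1}\|\boldsymbol q_j\|$ chains.

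The one genuine gap is the zero--one law. Ergodicity of the $\mathbb Z^n$-action $\boldsymbol\gamma\mapsto\boldsymbol\gamma+A\boldsymbol q_0$ is correctly characterised, but the assertion that $W_{m,n,A}(\psi)$ ``agrees, modulo a null set, with each of its translates'' is not self-evident. Writing $\boldsymbol q'=\boldsymbol q+\boldsymbol q_0$ one only gets the sandwich
\[
W_{m,n,A}\bigl(\psi(\cdot+C)\bigr)\ \subseteq\ W_{m,n,A}(\psi)+A\boldsymbol q_0\ \subseteq\ W_{m,n,A}\bigl(\psi(\cdot-C)\bigr),\qquad C=\|\boldsymbol q_0\|,
\]
and likewise for $W_{m,n,A}(\psi)$ itself; equality mod null of $W_{m,n,A}(\psi)$ and its translate follows only if one already knows that a bounded shift in the argument of a decreasing $\psi$ does not change $\mu_m(W_{m,n,A}(\psi))$. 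That is itself a zero--one-type statement and is exactly the nontrivial ingredient. (In the special case $m=n=1$, $q_0=1$, $q'>0$ the shift is exact and the argument closes, but already for negative $q'$, or for $n\ge2$, the sandwich is strict.) To make the first inclusion airtight you should either cite a Cassels/Gallagher/Schmidt-type zero--one law for $\mu_m(W_{m,n,A}(\psi))$, or replace the global ergodicity argument by a local quasi-independence estimate in every sub-ball of $[0,1)^m$ (which your separation bound does supply, uniformly) followed by the Lebesgue density theorem. As written, the step from $\mu_m(W_{m,n,A}(\psi))>0$ to $=1$ is asserted rather than proved.
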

Based on the above result, Velani asked the following restricted question.\\
\textbf{Velani's question.} Let $M\subset[0,1)^{m}$ be some subset (say, an affine subspace, or any manifold, or a fractal) supporting a probability measure $\nu_{M}$. Do we still have $$\boldsymbol{\rm Bad}^{\boldsymbol{0}}(m,n)=\bigcap\limits_{\psi\in\mathcal{D}}V^{M}_{m,n}(\psi),$$
where $$V^{M}_{m,n}(\psi):=\{A\in[0,1)^{m\times n}: \nu_{M}(W_{m,n,A}(\psi)\cap M)=1\}.$$

To the best of our knowledge, the above \textbf{Velani's question} was first studied by Ram\'{\i}rez \cite{FAR1} who gave a answer to \textbf{Velani's question} when $n=1$ and $M=\{\boldsymbol{\gamma}\}$ is a single point with the Dirac measure $\nu=\delta_{\boldsymbol{\gamma}}$. Note that when $M=\{\boldsymbol{\gamma}\}$ is a single point with the Dirac measure $\nu=\delta_{\boldsymbol{\gamma}}$, since $\nu(W_{m,n,A}(\psi)\cap\{\boldsymbol{\gamma}\})=1$ is equivalent to $A\in W_{m,n}^{\boldsymbol{\gamma}}(\psi)$, we have $$V^{\{\boldsymbol{\gamma}\}}_{m,n}(\psi)=W_{m,n}^{\boldsymbol{\gamma}}(\psi).$$ It means that $$\bigcap\limits_{\psi\in\mathcal{D}}V^{\{\boldsymbol{\gamma}\}}_{m,n}(\psi)=\bigcap\limits_{\psi\in\mathcal{D}}W_{m,n}^{\boldsymbol{\gamma}}(\psi)=\Omega^{\boldsymbol{\gamma}}(m,n).$$ Ram\'{\i}rez \cite[Theorem 2.1]{FAR1} showed that
\begin{equation*}
\Omega^{\boldsymbol{\gamma}}(m,1)\cap\boldsymbol{\rm Bad}^{\boldsymbol{0}}(m,1)=\emptyset\ {\rm for\ each}\ \boldsymbol{\gamma}\in[0,1)^{m}.
\end{equation*}
More exactly, Ram\'{\i}rez \cite[Theorem 2.1]{FAR1} proved that
\begin{equation*}
\bigcup_{\boldsymbol{\gamma}\in[0,1)^{m}}\Omega^{\boldsymbol{\gamma}}(m,1)=[0,1)^{m}\setminus\boldsymbol{\rm Bad}^{\boldsymbol{0}}(m,1).
\end{equation*}
What is more, Ram\'{\i}rez \cite[Theorem 2.2]{FAR1} demonstrated that $\Omega(m,1)$ is Lebesgue measurable and $\Omega_{A}(m,1)$ has Lebesgue measure 0 for each $A\in[0,1)^{m}$. By Ram\'{\i}rez's results and Fubini's theorem \cite[Theorem 18.3]{PB}, we know that
\begin{equation*}
\Omega_{A}(m,1)\neq\emptyset\ \Leftrightarrow\ A\in[0,1)^{m}\setminus\boldsymbol{\rm Bad}^{\boldsymbol{0}}(m,1)
\end{equation*}
and
\begin{equation*}
\Omega^{\boldsymbol{\gamma}}(m,1)\ {\rm has\ Lebesgue\ measure\ 0\ for\ almost\ all}\ \boldsymbol{\gamma}\in[0,1)^{m}.
\end{equation*}
Ram\'{\i}rez also proved that $\Omega^{\boldsymbol{\gamma}}(m,1)$ always have measure 0 or 1 \cite[Lemma 4.5]{FAR1}. Furthermore, Ram\'{\i}rez gave some open problems in \cite{FAR1}.\\
\textbf{Problem 1.} Whether does there exist $\boldsymbol{\gamma}\in[0,1)^{m}$ for which $\Omega^{\boldsymbol{\gamma}}(m,1)$ has measure 1 ? \\
\textbf{Problem 2.} Is $\Omega^{\boldsymbol{\gamma}}(m,1)$ non-empty for any $\boldsymbol{\gamma}\in[0,1)^{m}$ ?

In this paper, we give a negative answer to \textbf{Problem 1} and give an affirmative answer to \textbf{Problem 2}. Our main result are as follows.
\begin{theorem}\label{27}
\begin{enumerate}[(i)]
\item We have $$\bigcup_{\boldsymbol{\gamma}\in[0,1)^{m}}\Omega^{\boldsymbol{\gamma}}(m,n)= [0,1)^{m\times n}\setminus\boldsymbol{\rm Bad}^{\boldsymbol{0}}(m,n).$$
\item For any $\boldsymbol{\gamma}\in[0,1)^{m}$, we have $$\mu_{mn}(\Omega^{\boldsymbol{\gamma}}(m,n))=0.$$
\item For every $A\in[0,1)^{m\times n}$, we have $$\mu_{m}(\Omega_{A}(m,n))=0.$$
\end{enumerate}
\end{theorem}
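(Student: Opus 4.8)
The plan is to reduce all three statements to a single criterion involving the \emph{best--approximation function}
$$\delta_{A,\boldsymbol{\gamma}}(Q):=\min\bigl\{\langle A\boldsymbol{q}-\boldsymbol{\gamma}\rangle:\ \boldsymbol{q}\in\mathbb{Z}^{n},\ 0<\|\boldsymbol{q}\|\le Q\bigr\}\qquad(Q\in\mathbb{N}),$$
which is non-increasing in $Q$. Since every $\psi\in\mathcal{D}$ is non-increasing, a vector realising the minimum has norm $\le Q$, so $A\in W_{m,n}^{\boldsymbol{\gamma}}(\psi)$ if and only if $\delta_{A,\boldsymbol{\gamma}}(Q)<\psi(Q)$ for infinitely many $Q$. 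I would first set aside the \emph{degenerate} situations: if $\overline{A\mathbb{Z}^{n}}\neq\mathbb{T}^{m}$ then $\Omega^{\boldsymbol{\gamma}}(m,n)$ lies in the Lebesgue-null union of proper affine subspaces $\{A:\boldsymbol{p}^{\top}A\in\mathbb{Z}^{n}\text{ for some }\boldsymbol{p}\neq\boldsymbol{0}\}$ and $\Omega_{A}(m,n)$ lies in the proper closed subgroup $\overline{A\mathbb{Z}^{n}}$, and the case $A\boldsymbol{q}\equiv\boldsymbol{\gamma}$ solvable is handled directly (it gives $\boldsymbol{0}\in\Omega_{A}(m,n)$ once one reduces to $\boldsymbol{\gamma}=\boldsymbol{0}$). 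For the remaining non-degenerate pairs $\delta_{A,\boldsymbol{\gamma}}(Q)\to0$ with infinitely many distinct records, and comparing the displayed criterion with the series test of Theorem~\ref{01} yields the key equivalence
$$(A,\boldsymbol{\gamma})\in\Omega(m,n)\quad\Longleftrightarrow\quad\sum_{Q\ge 1}Q^{n-1}\delta_{A,\boldsymbol{\gamma}}(Q)^{m}<\infty :$$
if the series converges then no $\psi\in\mathcal{D}$ can dominate $\delta_{A,\boldsymbol{\gamma}}$ eventually, so $(A,\boldsymbol{\gamma})\in W_{m,n}(\psi)$; if it diverges then $\psi:=\delta_{A,\boldsymbol{\gamma}}$ itself lies in $\mathcal{D}$ and is not approximated.

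For part (i), that $A\in\boldsymbol{\rm Bad}^{\boldsymbol{0}}(m,n)$ forces $\Omega_{A}(m,n)=\emptyset$ reduces, for each $\boldsymbol{\gamma}$, to $\sum_{Q}Q^{n-1}\delta_{A,\boldsymbol{\gamma}}(Q)^{m}=\infty$. Writing $\boldsymbol{q}^{(k)}$ for the successive records, $D_{k}:=\|\boldsymbol{q}^{(k)}\|$, $\delta_{k}:=\langle A\boldsymbol{q}^{(k)}-\boldsymbol{\gamma}\rangle$, we have $\langle A(\boldsymbol{q}^{(k+1)}-\boldsymbol{q}^{(k)})\rangle\le 2\delta_{k}$, so the badly approximable hypothesis $\langle A\boldsymbol{q}\rangle^{m}\gg\|\boldsymbol{q}\|^{-n}$ gives $2D_{k+1}\ge\|\boldsymbol{q}^{(k+1)}-\boldsymbol{q}^{(k)}\|\gg\delta_{k}^{-m/n}$, i.e.\ $\delta_{k}^{m}\gg D_{k+1}^{-n}$, whence $\sum_{Q}Q^{n-1}\delta_{A,\boldsymbol{\gamma}}(Q)^{m}\asymp\sum_{k}\delta_{k}^{m}(D_{k+1}^{n}-D_{k}^{n})\gg\sum_{k}\bigl(1-(D_{k}/D_{k+1})^{n}\bigr)=\infty$, the last divergence being standard once $D_{k}\to\infty$. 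The reverse implication, $A\notin\boldsymbol{\rm Bad}^{\boldsymbol{0}}(m,n)\Rightarrow\Omega_{A}(m,n)\neq\emptyset$, is where the work lies. From $\liminf_{\|\boldsymbol{q}\|\to\infty}\|\boldsymbol{q}\|^{n/m}\langle A\boldsymbol{q}\rangle=0$ I would extract a very lacunary sequence $\boldsymbol{q}_{j}$ with $c_{j}:=\|\boldsymbol{q}_{j}\|^{n/m}\langle A\boldsymbol{q}_{j}\rangle\to0$, put $\boldsymbol{v}_{j}:=A\boldsymbol{q}_{j}\bmod\mathbb{Z}^{m}$ (the representative of norm $\langle A\boldsymbol{q}_{j}\rangle$), and set $\boldsymbol{\gamma}:=\sum_{j\ge1}\lambda_{j}\boldsymbol{v}_{j}\bmod\mathbb{Z}^{m}$ for positive integers $\lambda_{j}$ still to be chosen. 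Then for $0\le k\le\lambda_{J}$ the vector $\boldsymbol{q}=\sum_{j<J}\lambda_{j}\boldsymbol{q}_{j}+k\boldsymbol{q}_{J}$ satisfies $\|\boldsymbol{q}\|\asymp k\|\boldsymbol{q}_{J}\|$ and $\langle A\boldsymbol{q}-\boldsymbol{\gamma}\rangle\le(\lambda_{J}-k)\langle A\boldsymbol{q}_{J}\rangle+\sum_{j>J}\lambda_{j}\langle A\boldsymbol{q}_{j}\rangle$, so varying $k$ and $J$ gives a good approximation of $\boldsymbol{\gamma}$ at \emph{every} sufficiently large scale; a computation shows the $J$-th block of scales contributes $\asymp c_{J}^{m}\lambda_{J}^{\,n+m}$ to $\sum_{Q}Q^{n-1}\delta_{A,\boldsymbol{\gamma}}(Q)^{m}$, plus negligible bridging terms. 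Choosing recursively $\|\boldsymbol{q}_{j}\|$ large, then $c_{j}$ small along a subsequence, then $\lambda_{j}$ inside an appropriate range of size $2^{cj}$, makes the whole series finite, so $(A,\boldsymbol{\gamma})\in\Omega(m,n)$ by the equivalence; together with the first implication this gives $\bigcup_{\boldsymbol{\gamma}}\Omega^{\boldsymbol{\gamma}}(m,n)=[0,1)^{m\times n}\setminus\boldsymbol{\rm Bad}^{\boldsymbol{0}}(m,n)$. The bookkeeping of this construction is the main obstacle.

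Parts (ii) and (iii) then follow from the equivalence and one elementary measure estimate. First, $(A,\boldsymbol{\gamma})\in\Omega(m,n)\Rightarrow\limsup_{Q}Q^{n/m}\delta_{A,\boldsymbol{\gamma}}(Q)=0$: if this $\limsup$ were $L>0$, pick $Q_{k}\to\infty$ with $Q_{k+1}\ge 4Q_{k}$ and $Q_{k}^{n/m}\delta_{A,\boldsymbol{\gamma}}(Q_{k})\ge L/2$; by monotonicity $\sum_{Q_{k}/2<Q\le Q_{k}}Q^{n-1}\delta_{A,\boldsymbol{\gamma}}(Q)^{m}\gg(L/2)^{m}$ for every $k$, forcing $\sum_{Q}Q^{n-1}\delta_{A,\boldsymbol{\gamma}}(Q)^{m}=\infty$, a contradiction. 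Secondly, for fixed $\boldsymbol{\gamma}$ the map $A\mapsto A\boldsymbol{q}\bmod\mathbb{Z}^{m}$ pushes $\mu_{mn}$ to the uniform measure on $\mathbb{T}^{m}$ for any $\boldsymbol{q}\neq\boldsymbol{0}$, so $\mu_{mn}(\{A:\langle A\boldsymbol{q}-\boldsymbol{\gamma}\rangle<\varepsilon\})\le(2\varepsilon)^{m}$, and symmetrically in the $\boldsymbol{\gamma}$-variable; since $\#\{0<\|\boldsymbol{q}\|\le Q\}\le(2Q+1)^{n}$, a union bound gives, for a suitable small constant $c_{0}>0$,
$$\mu_{mn}\bigl(\{A:\delta_{A,\boldsymbol{\gamma}}(Q)<c_{0}Q^{-n/m}\}\bigr)\le\tfrac12,\qquad\mu_{m}\bigl(\{\boldsymbol{\gamma}:\delta_{A,\boldsymbol{\gamma}}(Q)<c_{0}Q^{-n/m}\}\bigr)\le\tfrac12\quad\text{for all }Q.$$
Letting $Q\to\infty$ and using continuity of measure, the sets $\{A:\limsup_{Q}Q^{n/m}\delta_{A,\boldsymbol{\gamma}}(Q)\ge c_{0}\}$ and $\{\boldsymbol{\gamma}:\limsup_{Q}Q^{n/m}\delta_{A,\boldsymbol{\gamma}}(Q)\ge c_{0}\}$ have measure $\ge\tfrac12$, so $\Omega^{\boldsymbol{\gamma}}(m,n)$ and $\Omega_{A}(m,n)$ are contained in the Borel sets $\{\limsup_{Q}Q^{n/m}\delta_{A,\boldsymbol{\gamma}}(Q)=0\}$, which have measure $\le\tfrac12$ (in particular both fibers are automatically Lebesgue measurable with measure $\le\tfrac12$). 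It remains to boost $\le\tfrac12$ to $0$ by a zero--one law. The Borel set $\{\limsup_{Q}Q^{n/m}\delta_{A,\boldsymbol{\gamma}}(Q)=0\}$ is invariant under $\boldsymbol{\gamma}\mapsto\boldsymbol{\gamma}+A\boldsymbol{q}_{0}$ (this changes $\delta_{A,\boldsymbol{\gamma}}$ only by a bounded shift of the argument and finitely many extra terms, which cannot affect whether $Q^{n/m}\delta_{A,\boldsymbol{\gamma}}(Q)\to0$); since $A\mathbb{Z}^{n}$ is dense in $\mathbb{T}^{m}$ in the non-degenerate case, this set is translation-invariant modulo null sets, hence of measure $0$ or $1$ by ergodicity, hence $0$, which gives (iii). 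For (ii) the same runs in the $A$-variable via the ergodic action $A\mapsto AV$ of $GL_{n}(\mathbb{Z})$ on $[0,1)^{m\times n}$ when $n\ge2$, and via the zero--one law \cite[Lemma 4.5]{FAR1} of Ram\'{\i}rez (whose proof extends) when $n=1$; in particular $\Omega^{\boldsymbol{\gamma}}(m,n)$ never has full measure, answering Problem~1 negatively.
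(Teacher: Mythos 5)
The central difficulty is that the criterion you propose — $(A,\boldsymbol\gamma)\in\Omega(m,n)\Leftrightarrow\sum_Q Q^{n-1}\delta_{A,\boldsymbol\gamma}(Q)^m<\infty$ — fails precisely in the ``degenerate'' case you set aside. If $\boldsymbol\gamma\equiv A\boldsymbol q_0\pmod{\mathbb{Z}^m}$ for some $\boldsymbol q_0\neq\boldsymbol 0$, then $\delta_{A,\boldsymbol\gamma}(Q)=0$ for all $Q\geq\|\boldsymbol q_0\|$ and the series converges trivially, yet membership in $\Omega(m,n)$ requires \emph{infinitely many} approximants, not a single zero. This is exactly why the paper's Lemma~\ref{36} characterizes $\Omega(m,n)$ by convergence of all the tails $S_l(A,\boldsymbol\gamma)$, and only Corollary~\ref{41} (valid when $\langle A\boldsymbol q-\boldsymbol\gamma\rangle>0$ for every $\boldsymbol q\neq\boldsymbol 0$) reduces to $S_1$. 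Your parenthetical ``it gives $\boldsymbol 0\in\Omega_A(m,n)$ once one reduces to $\boldsymbol\gamma=\boldsymbol 0$'' is in fact the opposite of the truth for $A\in\boldsymbol{\rm Bad}^{\boldsymbol 0}(m,n)$: there $\langle A\boldsymbol q\rangle^m\geq c\|\boldsymbol q\|^{-n}$, so $\psi(q)=\tfrac12(c/q^n)^{1/m}\in\mathcal{D}$ is never beaten and $\boldsymbol 0\notin\Omega_A(m,n)$. Consequently your proof that $A\in\boldsymbol{\rm Bad}^{\boldsymbol 0}(m,n)\Rightarrow\Omega_A(m,n)=\emptyset$ is missing the case $\boldsymbol\gamma\in A\mathbb{Z}^n+\mathbb{Z}^m$, which the paper handles directly (its Case~1, taking $l=\|\boldsymbol q_0\|+1$ so that $S_l=\infty$ from the lower bound on $\langle A\boldsymbol q\rangle$). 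Once this is patched, your records argument for the remaining $\boldsymbol\gamma$ is a small streamlining of the paper's Case~2: the single infinite-product fact $\sum(1-(D_k/D_{k+1})^n)=\infty$ replaces the paper's split on whether $\limsup D_{k+1}/D_k$ is finite. For the converse of~(i), your sketch introduces multipliers $\lambda_j$ and you yourself flag the bookkeeping as unfinished; the paper shows $\lambda_j\equiv1$ already works, so before pursuing this you should check whether the extra freedom buys anything.

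For parts~(ii) and~(iii) your route is genuinely different from the paper's and worth keeping in mind as an alternative. The paper proves~(ii) via the inclusion $\Omega^{\boldsymbol\gamma}(m,n)\subset\mathbf{Sing}_{m,n}^{\boldsymbol\gamma}(1)$ together with Khintchine's null-set theorem (for $\boldsymbol\gamma=\boldsymbol 0$) and the Kim--Kim Hausdorff-measure statement Lemma~\ref{45} (for $\boldsymbol\gamma\neq\boldsymbol 0$), and proves~(iii) by citing Kurzweil's Lemma~12 to get a single $\psi_0\in\mathcal{D}$ with $\mu_m(W_{m,n,A}(\psi_0))=0$. You instead show $\Omega^{\boldsymbol\gamma}(m,n)$ and $\Omega_A(m,n)$ lie in the Borel set $\{\limsup_Q Q^{n/m}\delta(Q)=0\}$, bound its measure by $1/2$ with a union bound and continuity of measure, and then pass to zero via a zero--one law. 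This is more self-contained, but the last step is where you take on risk: the exact invariance of $\{\limsup=0\}$ under $\boldsymbol\gamma\mapsto\boldsymbol\gamma+A\boldsymbol q_0$, and then density of $A\mathbb{Z}^n$, need a line of verification (the shifted minimum acquires a spurious $q'=0$ term that is harmless only once $\delta\to0$); the ergodicity of the diagonal $GL_n(\mathbb{Z})$-action on $[0,1)^{m\times n}$ for $n\geq2$ should be justified, not asserted; and for $n=1$ you are simply importing Ram\'irez's zero--one law, which is the same appeal the paper could have made. None of these is implausible, but spelled out they are not shorter than the citations the paper uses.
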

\begin{remark}
Theorem $\ref{27}$ (\rmnum{2}) with $n=1$ gives a negative answer to \textbf{Problem 1}. Moreover, the answer to \textbf{Problem 1} is ``No'' for $n\geq1$.
\end{remark}
By Theorem $\ref{27}$, we know that $\mu_{mn}(\Omega^{\boldsymbol{\gamma}}(m,n))=0$ for any $\boldsymbol{\gamma}\in[0,1)^{m}$ and $\mu_{m}(\Omega_{A}(m,n))=0$ for every $A\in[0,1)^{m\times n}$. Naturally, we want to know the Hausdorff dimension of the sets $\Omega^{\boldsymbol{\gamma}}(m,n)$ and $\Omega_{A}(m,n)$. Theorem $\ref{114}$ gives the Hausdorff dimension of $\Omega^{\boldsymbol{0}}(m,n)$ when $mn>1$ and a lower bound of the Hausdorff dimension of $\Omega^{\boldsymbol{\gamma}}(m,n)$ when $m>n$. Theorem $\ref{120}$ gives a upper bound of the Hausdorff dimension of $\Omega_{\alpha}(1,1)$. Furthermore, we can obtain the Hausdorff dimension of $\Omega_{\alpha}(1,1)$ when the denominator of the convergent of the continued fraction of $\alpha$ increases super-exponentially.
\begin{theorem}\label{114}
(\rmnum{1}) If $m,n\in\mathbb{N}$ and $mn>1$, then we have $$\dim_{\rm H}(\Omega^{\boldsymbol{0}}(m,n))=mn\left(1-\frac{1}{m+n}\right).$$
(\rmnum{2}) If $m,n\in\mathbb{N}$ and $m>n$, then for all $\boldsymbol{\gamma}\in[0,1)^{m}$, we have $$\dim_{\rm H}(\Omega^{\boldsymbol{\gamma}}(m,n))\geq m(n-1)+m\left(\frac{m-n}{m+n}\right)^{2}.$$
\end{theorem}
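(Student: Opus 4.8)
The plan is to reduce both parts to the dimension theory of singular systems, via a reformulation of membership in $\Omega^{\boldsymbol{\gamma}}(m,n)$ in terms of the record (best‑approximation) function. For $A\in[0,1)^{m\times n}$ and $\boldsymbol{\gamma}\in[0,1)^m$ set $r_{\boldsymbol{\gamma}}(Q):=\min\{\langle A\boldsymbol q-\boldsymbol{\gamma}\rangle:\boldsymbol q\in\mathbb Z^n,\ 1\le\|\boldsymbol q\|\le Q\}$, a non‑increasing step function. I would first prove
\[
A\in\Omega^{\boldsymbol{\gamma}}(m,n)\ \Longleftrightarrow\ \int_{1}^{\infty}r_{\boldsymbol{\gamma}}(Q)^{m}Q^{n-1}\,dQ<\infty .
\]
The mechanism is the same comparison that underlies Theorem~\ref{27}: for a decreasing $\psi$ the inequality $\langle A\boldsymbol q-\boldsymbol{\gamma}\rangle<\psi(\|\boldsymbol q\|)$ has infinitely many solutions iff $r_{\boldsymbol{\gamma}}(Q)<\psi(Q)$ for infinitely many $Q$, so $A\notin\Omega^{\boldsymbol{\gamma}}(m,n)$ iff the decreasing function $r_{\boldsymbol{\gamma}}$ itself lies in $\mathcal{D}$, i.e. iff $\sum_q q^{n-1}r_{\boldsymbol{\gamma}}(q)^m=\infty$; one only has to discard the at most one vector $\boldsymbol q$ with $A\boldsymbol q\in\boldsymbol{\gamma}+\mathbb Z^m$ (for $\boldsymbol{\gamma}=\boldsymbol 0$ this is exactly the vector $\boldsymbol q=\boldsymbol 0$ that we already excluded, and for $\boldsymbol{\gamma}\neq\boldsymbol 0$ the exceptional $A$ form a countable union of proper affine subspaces, avoided in the construction below). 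Two consequences will be used: (a) the dyadic estimate $\int_{Q}^{2Q}r_{\boldsymbol{\gamma}}(s)^m s^{n-1}\,ds\ge c\,r_{\boldsymbol{\gamma}}(2Q)^m(2Q)^n$ shows that convergence of the integral forces $r_{\boldsymbol{\gamma}}(Q)Q^{n/m}\to 0$, i.e. $(A,\boldsymbol{\gamma})$ is singular; (b) conversely, $r_{\boldsymbol{\gamma}}(Q)\le Q^{-n/m}(\log Q)^{-2/m}$ for all large $Q$ already gives $A\in\Omega^{\boldsymbol{\gamma}}(m,n)$, since then the integrand is $\le\bigl(Q(\log Q)^2\bigr)^{-1}$.

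For part (i) the upper bound follows at once from (a): $\Omega^{\boldsymbol 0}(m,n)\subseteq\boldsymbol{\rm Sing}(m,n)$, and $\dim_{\rm H}\boldsymbol{\rm Sing}(m,n)=mn\bigl(1-\tfrac{1}{m+n}\bigr)$ for $mn>1$ by the work of Cheung (for $(2,1)$), Cheung–Chevallier (for $(n,1)$ and $(1,n)$) and Kadyrov–Kleinbock–Lindenstrauss–Margulis together with Das–Fishman–Simmons–Urbański in general. For the lower bound I would revisit the Cantor‑set constructions establishing $\dim_{\rm H}\boldsymbol{\rm Sing}(m,n)\ge mn(1-\tfrac1{m+n})$ and note that the rate of singularity along the set can be prescribed arbitrarily slowly — in particular one can arrange $r_{\boldsymbol 0}(Q)\le Q^{-n/m}(\log Q)^{-2/m}$ for all large $Q$, uniformly over the Cantor set, because the extra factor enters the dimension count only through $O(\log\log Q)$ and therefore leaves it unchanged. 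By consequence (b) this set lies in $\Omega^{\boldsymbol 0}(m,n)$, so $\dim_{\rm H}\Omega^{\boldsymbol 0}(m,n)=mn\bigl(1-\tfrac1{m+n}\bigr)$.

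For part (ii) only a lower bound is required, so by (b) it suffices to construct a closed set $F\subseteq[0,1)^{m\times n}$ with $\dim_{\rm H}F\ge m(n-1)+m\bigl(\tfrac{m-n}{m+n}\bigr)^2$ along which $r_{\boldsymbol{\gamma}}(Q)\le Q^{-n/m}(\log Q)^{-2/m}$ for all large $Q$. The idea is to let the first $n-1$ columns of $A$ range over a full‑dimensional set $\mathcal T$ (contributing $m(n-1)$ to the dimension and supplying $\asymp Q^{\,n-1}$ ``correction vectors'' $A'\boldsymbol q'$ that are roughly $Q^{-(n-1)/m}$‑dense in $[0,1)^m$), and to carve the last column out of $[0,1)^m$ by an inhomogeneous, higher‑dimensional analogue of Cheung's construction of singular vectors: passing from scale $Q_k$ to $Q_{k+1}$ one subdivides the current box of last columns into those sub‑boxes on which some new $\boldsymbol q$ with $\|\boldsymbol q\|\le Q_{k+1}$ realises the target $Q_{k+1}^{-n/m}(\log Q_{k+1})^{-2/m}$, the number of admissible sub‑boxes being controlled by a lattice‑point count; the $\asymp Q$ multiples of the last column together with the $\asymp Q^{\,n-1}$ corrections combine (for suitable $A'$ and $\boldsymbol a$) to recover the full Dirichlet exponent $n/m$ uniformly in $Q$. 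Since Hausdorff dimension is superadditive under products, $\dim_{\rm H}F\ge m(n-1)+\dim_{\rm H}\mathcal S$, where $\mathcal S$ is the set of admissible last columns. The exponent $m\bigl(\tfrac{m-n}{m+n}\bigr)^2$ emerges from optimising the two free parameters of the construction — the growth ratio $\log Q_{k+1}/\log Q_k$ and the proportion of retained branches — against the constraint imposed by the number of available integer vectors; performing this optimisation, and upgrading the heuristic lattice‑point/overlap counts to rigorous estimates (which also pins down a full‑dimensional admissible $\mathcal T$), is the main obstacle. Membership $F\subseteq\Omega^{\boldsymbol{\gamma}}(m,n)$ is then immediate from consequence (b).
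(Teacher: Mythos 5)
Your reduction to the convergence of $\int_1^\infty r_{\boldsymbol\gamma}(Q)^m Q^{n-1}\,dQ$ is exactly the paper's Lemma~\ref{36}/Corollary~\ref{41} in integral form, and your consequence (a) is Lemma~\ref{302} ($\Omega^{\boldsymbol\gamma}\subset\mathbf{Sing}_{m,n}^{\boldsymbol\gamma}(1)$); so the skeleton of the argument matches the paper, and the upper bound in part (i) is the same (cite $\dim_{\rm H}\mathbf{Sing}_{m,n}^{\boldsymbol 0}(1)=mn(1-\tfrac{1}{m+n})$). The genuine gap is in both lower bounds, where you propose to carry out new Cantor-set constructions rather than invoke results that already do the work. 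For part~(i), the paper observes that $\bigcup_{\kappa>1}D_{m,n}^{\boldsymbol\gamma}(\psi_\kappa)\subset\Gamma^{\boldsymbol\gamma}(m,n)$ (your consequence (b), with the much easier polynomially improved rate $Q^{-n\kappa/m}$, $\kappa>1$, in place of the delicate logarithmic rate) and then cites the Das--Fishman--Simmons--Urba\'nski theorem that $\dim_{\rm H}\mathbf{VSing}_{m,n}^{\boldsymbol 0}=mn(1-\tfrac{1}{m+n})$ (Lemma~\ref{303}); you instead ask to ``revisit'' the Cantor-set construction and insert a $(\log Q)^{-2/m}$ factor, asserting without proof that it costs only $O(\log\log Q)$ in the dimension count. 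That assertion is precisely what needs to be proved, and it is nontrivial; the paper's route via $\mathbf{VSing}$ avoids it entirely. For part~(ii) you acknowledge explicitly that rigorising the lattice-point and overlap estimates in your inhomogeneous construction ``is the main obstacle,'' so this is a sketch of a plan, not a proof. The paper closes this gap by citing Schleischitz's lower bound $\dim_{\rm H}\mathbf{Sing}_{m,1}^{\boldsymbol\gamma}(\kappa)\ge m\bigl(\tfrac{m-\kappa}{m+\kappa}\bigr)^2$ (Lemma~\ref{19}), promoting it to $m(n-1)+m\bigl(\tfrac{m-n\kappa}{m+n\kappa}\bigr)^2$ by taking products with $[0,1)^{m\times(n-1)}$, and letting $\kappa\to 1^+$.

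Two smaller issues: the exceptional set of $A$ with $A\boldsymbol q-\boldsymbol\gamma\in\mathbb Z^m$ for some nonzero $\boldsymbol q$ is not ``at most one vector'' but a countable union of $m(n-1)$-dimensional hyperplanes $\mathcal R^{\boldsymbol\gamma}(m,n)$; this is harmless precisely because both target lower bounds strictly exceed $m(n-1)$, but the justification should be stated. Also, for the equality $\Gamma^{\boldsymbol\gamma}\setminus\mathcal R^{\boldsymbol\gamma}=\Omega^{\boldsymbol\gamma}\setminus\mathcal R^{\boldsymbol\gamma}$ you need the paper's Lemma~\ref{43} (convergence of $S_l$ is independent of $l$ when $\langle A\boldsymbol q-\boldsymbol\gamma\rangle>0$ for all $\boldsymbol q\ne\boldsymbol 0$), which your write-up elides.
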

In view of Theorem $\ref{114}$ (\rmnum{2}), we immediately obtain the following corollary. The condition of Corollary $\ref{115}$ is weaker than the condition of Theorem $\ref{114}$ (\rmnum{2}).
\begin{corollary}\label{115}
For all $m,n\in\mathbb{N}$ with $mn>1$, we have $$\Omega^{\boldsymbol{\gamma}}(m,n)\neq\emptyset$$ for each $\boldsymbol{\gamma}\in[0,1)^{m}$.
\end{corollary}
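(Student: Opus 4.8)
The plan is to argue according to the relation between $m$ and $n$.

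\emph{The case $m>n$.} Here Corollary~\ref{115} is immediate from Theorem~\ref{114}~(\rmnum{2}): since $m>n\ge1$ we have $m-n\ge1$, so $m\big(\tfrac{m-n}{m+n}\big)^{2}>0$ while $m(n-1)\ge0$, and thus the lower bound in Theorem~\ref{114}~(\rmnum{2}) is strictly positive. A set of positive Hausdorff dimension is non-empty, hence $\Omega^{\boldsymbol{\gamma}}(m,n)\neq\emptyset$. In particular this settles every pair $(m,1)$ with $m\ge2$, i.e.\ the part of \textbf{Problem 2} concerning $n=1$.

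\emph{The case $m\le n$.} Since $mn>1$, this forces $n\ge2$, so $n-1\ge1$. First I would record a reduction to the homogeneous set $\Omega^{\boldsymbol{0}}(m,n)$: \emph{if $B\in[0,1)^{m\times n}$ has last column $B\boldsymbol{e}_{n}=\boldsymbol{\gamma}$ and $B\in\Omega^{\boldsymbol{0}}(m,n)$, then $B\in\Omega^{\boldsymbol{\gamma}}(m,n)$.} Indeed, fix $\psi\in\mathcal{D}$ and set $\psi_{+}(q):=\psi(q+1)$; then $\psi_{+}$ is decreasing and $\sum_{q}q^{n-1}\psi_{+}(q)^{m}\asymp\sum_{q}q^{n-1}\psi(q)^{m}=\infty$, so $\psi_{+}\in\mathcal{D}$ and $B\in W_{m,n}^{\boldsymbol{0}}(\psi_{+})$. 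For each of the infinitely many $\boldsymbol{q}^{*}$ realising this, put $\boldsymbol{q}:=\boldsymbol{q}^{*}+\boldsymbol{e}_{n}$; then $B\boldsymbol{q}-\boldsymbol{\gamma}=B\boldsymbol{q}^{*}$ and $\|\boldsymbol{q}\|\le\|\boldsymbol{q}^{*}\|+1$, whence $\langle B\boldsymbol{q}-\boldsymbol{\gamma}\rangle=\langle B\boldsymbol{q}^{*}\rangle<\psi_{+}(\|\boldsymbol{q}^{*}\|)=\psi(\|\boldsymbol{q}^{*}\|+1)\le\psi(\|\boldsymbol{q}\|)$, so $B\in W_{m,n}^{\boldsymbol{\gamma}}(\psi)$. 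As $\psi\in\mathcal{D}$ is arbitrary, $B\in\Omega^{\boldsymbol{\gamma}}(m,n)$.

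By this reduction it is enough to produce a matrix in $\Omega^{\boldsymbol{0}}(m,n)$ whose last column equals $\boldsymbol{\gamma}$, i.e.\ to show that $\Omega^{\boldsymbol{0}}(m,n)$ meets the affine plane $\Pi_{\boldsymbol{\gamma}}:=\{B\in[0,1)^{m\times n}:B\boldsymbol{e}_{n}=\boldsymbol{\gamma}\}$. I would do this by running the construction behind Theorem~\ref{114}~(\rmnum{1}) inside $\Pi_{\boldsymbol{\gamma}}$. That construction builds a Cantor-type subset of $\Omega^{\boldsymbol{0}}(m,n)$ by choosing a sequence of working scales that increases fast enough to force $\langle B\boldsymbol{q}\rangle$ to be very small along a chosen sequence of integer vectors, yet slowly enough that no $\psi\in\mathcal{D}$ can stay below these values at every scale, and then nesting the corresponding cubes of matrices. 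Since $\Pi_{\boldsymbol{\gamma}}$ is $m(n-1)$-dimensional and $n-1\ge1$, the same nested-cube scheme can be carried out within $\Pi_{\boldsymbol{\gamma}}$: freezing the last column at $\boldsymbol{\gamma}$ merely translates the approximation targets by the known vector $q_{n}\boldsymbol{\gamma}$, and the remaining $n-1$ columns still provide enough freedom at each step to realise the required near-Dirichlet approximations with integer vectors whose first $n-1$ coordinates are non-zero.

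The main obstacle is precisely this last point: one has to verify that the ubiquity/counting estimates driving the proof of Theorem~\ref{114}~(\rmnum{1}) survive the restriction to $\Pi_{\boldsymbol{\gamma}}$ — concretely, that at each admissible scale there remain, inside the frozen-column slice, enough integer vectors realising the prescribed approximations to keep intact the delicate balance between the shrinking of the cubes and the growth of the scales. For $m=n$ one cannot sidestep this by transposing to the strictly tall case already handled, so a frozen-column argument of this kind seems unavoidable.
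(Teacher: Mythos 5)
Your treatment of the case $m>n$ matches the paper exactly: quote Theorem~\ref{114}~(\rmnum{2}), note the lower bound is strictly positive, and conclude non-emptiness. The gap is in the case $n\ge 2$. Your reduction to the affine slice $\Pi_{\boldsymbol{\gamma}}=\{B:B\boldsymbol{e}_n=\boldsymbol{\gamma}\}$ is correct as far as it goes, but you then try to exhibit a point of $\Omega^{\boldsymbol{0}}(m,n)\cap\Pi_{\boldsymbol{\gamma}}$ by re-running the Cantor/ubiquity construction used for the Hausdorff dimension bound inside the slice, and you explicitly concede that the counting estimates needed to make that work are not established. That is a genuine unresolved step, and it is also an over-engineered route: you don't need a fractal construction, because for $n\ge 2$ there is a matrix for which the inhomogeneous problem has \emph{exact} solutions.

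This is what the paper does: take
\begin{equation*}
A=\begin{pmatrix}
\gamma_{1} & 0 &\cdots & 0\\
\gamma_{2} & 0 &\cdots & 0\\
\vdots & \vdots & \ddots & \vdots \\
\gamma_{m} & 0 &\cdots & 0
\end{pmatrix},
\end{equation*}
so one column is $\boldsymbol{\gamma}$ and the rest are zero (possible precisely because $n\ge2$ gives a spare column to vary). Then $A(1,q,0,\dots,0)^{T}=\boldsymbol{\gamma}$ for every $q\in\mathbb{N}$, hence $\langle A\boldsymbol{q}-\boldsymbol{\gamma}\rangle=0$ for infinitely many $\boldsymbol{q}\in\mathbb{Z}^{n}$ of unbounded norm. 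Since any $\psi\in\mathcal{D}$ is decreasing with divergent $\sum q^{n-1}\psi(q)^{m}$, it is strictly positive everywhere, so $0<\psi(\|\boldsymbol{q}\|)$ always holds, giving $A\in W^{\boldsymbol{\gamma}}_{m,n}(\psi)$ for every $\psi\in\mathcal{D}$ and therefore $A\in\Omega^{\boldsymbol{\gamma}}(m,n)$. In fact this covers all $n\ge2$ in one stroke, so the $m>n$ dimension argument is only needed for $(m,n)$ with $n=1$, $m\ge2$. If you want to keep your reduction to $\Omega^{\boldsymbol{0}}(m,n)\cap\Pi_{\boldsymbol{\gamma}}$, the same observation finishes it: the matrix with last column $\boldsymbol{\gamma}$ and all other columns $0$ lies in $\Omega^{\boldsymbol{0}}(m,n)$ trivially (it kills every integer vector with zero last coordinate), so the slice is non-empty and there is no ubiquity estimate to check.
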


\begin{remark}
Corollary $\ref{115}$ gives an affirmative answer to \textbf{Problem 2} when $mn>1$.
The remaining unknown case is $m=n=1$.
\end{remark}

\begin{remark}
For $m=n=1$, it follows from \cite[Proposition A.2]{FAR1} that
\begin{equation*}
\Omega^{\gamma}(1,1)=\left\{\frac{p+\gamma}{q}:q\in\mathbb{N},0\leq p\leq q-1\right\},\ \forall\ \gamma\in[0,1)\cap\mathbb{Q}.
\end{equation*}
When $m>1$, in view of Theorem $\ref{114}$ (\rmnum{2}),
\begin{equation*}
\dim_{\rm H}\Omega^{\boldsymbol{\gamma}}(m,1)\geq m\left(\frac{m-1}{m+1}\right)^{2}\geq\frac{2}{9},\ \forall\ \boldsymbol{\gamma}\in[0,1)^{m}.
\end{equation*}
This implies that there is a significant difference between $m=1$ and $m\geq2$ in terms of Hausdorff dimension since the set $\Omega^{\gamma}(1,1)$ is countable when $\gamma\in[0,1)\cap\mathbb{Q}$.
\end{remark}
Now we estimate the Hausdorff dimension of $\Omega_{A}(m,n)$ with $m=n=1$, $A=\alpha\in[0,1)\setminus\mathbb{Q}$. For $\alpha\in[0,1)\setminus\mathbb{Q}$, denote $$w(\alpha):=\sup\{s>0: \langle q\alpha\rangle<q^{-s}\ {\rm for\ i.m.}\ q\in\mathbb{N}\},$$ which is the irrationality exponent of $\alpha$.
\begin{theorem}\label{120}
(\rmnum{1}) For any $\alpha\in[0,1)\setminus\mathbb{Q}$, we have $$\dim_{\rm H}(\Omega_{\alpha}(1,1))\leq\frac{2}{w(\alpha)+1}.$$
(\rmnum{2}) Let $\alpha\in[0,1)\setminus\mathbb{Q}$ with $$\liminf_{k\to\infty}\frac{\log q_{k+1}}{\log q_{k}}>1,$$ where $q_{k}=q_{k}(\alpha)$ is the denominator of the $k$-th convergent of the continued fraction of $\alpha$. Then $$\dim_{\rm H}(\Omega_{\alpha}(1,1))=\frac{1}{w(\alpha)+1}.$$
\end{theorem}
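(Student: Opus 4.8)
The plan is to first turn the definition of $\Omega_{\alpha}(1,1)$ into a combinatorial condition on the best inhomogeneous approximations of $\gamma$, and then run covering and Cantor-set arguments in the continued-fraction coding. Fix $\alpha\in[0,1)\setminus\mathbb{Q}$; the countable set $\mathbb{Z}\alpha+\mathbb{Z}$ has dimension $0$ and may be discarded. For $\gamma$ outside it let $b_{1}<b_{2}<\cdots$ be the denominators of the best inhomogeneous approximations of $\gamma$ with respect to $\alpha$ and put $\delta_{i}:=\langle b_{i}\alpha-\gamma\rangle$, so that $\delta_{1}>\delta_{2}>\cdots\to0$. Since $\psi$ is decreasing, $\gamma\in W_{1,1,\alpha}(\psi)$ holds precisely when $\delta_{i}<\psi(b_{i})$ for infinitely many $i$; and the largest decreasing $\psi$ with $\psi(b_{i})\le\delta_{i}$ for all $i$ is the step function equal to $\delta_{i}$ on $[b_{i},b_{i+1})$, whose tail sum equals $\sum_{i}\delta_{i}(b_{i+1}-b_{i})$. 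If that step function has a convergent sum then so does every decreasing competitor, while if it diverges then it is itself a member of $\mathcal{D}$ that $\gamma$ fails; this yields the reformulation
\begin{equation*}
\gamma\in\Omega_{\alpha}(1,1)\qquad\Longleftrightarrow\qquad\sum_{i}\delta_{i}\,(b_{i+1}-b_{i})<\infty .
\end{equation*}
Through the Ostrowski expansion $\gamma=\sum_{k\ge1}c_{k}(q_{k-1}\alpha-p_{k-1})$ of $\gamma$ with respect to $\alpha$ and the classical description of its best inhomogeneous approximations in terms of the digits $c_{k}$ (and the intermediate denominators), this series can be rewritten as an explicit series in $c_{k},a_{k},q_{k}$, so that membership in $\Omega_{\alpha}(1,1)$ becomes a quantitative smallness condition on the Ostrowski digits of $\gamma$.

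For the upper bound in (\rmnum{1}), fix $s>\tfrac{2}{w(\alpha)+1}$. Because $w(\alpha)=\limsup_{k}\tfrac{\log q_{k+1}}{\log q_{k}}$, there is an infinite set of depths $k$ along which $\tfrac{\log q_{k+1}}{\log q_{k}}\to w(\alpha)$. At each such depth, every $\gamma\in\Omega_{\alpha}(1,1)$ lies in one of the Ostrowski cylinders whose digit data is compatible with the smallness condition above; combining that compatibility with the three-distance geometry of $\{q\alpha\}$ at scale $q_{k+1}$ (roughly $q_{k}$ clusters of roughly $q_{k+1}/q_{k}$ orbit points, with within-cluster spacing $\asymp q_{k+1}^{-1}$), a counting argument confines $\gamma$ to a union of $\asymp q_{k}^{2}$ intervals of length $\asymp(q_{k}q_{k+1})^{-1}$. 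Hence for every large $N$ the set $\Omega_{\alpha}(1,1)$ is covered by such intervals over all depths $k\ge N$ in that subsequence, with $s$-content bounded by $\sum_{k}q_{k}^{2}(q_{k}q_{k+1})^{-s}$; since the exponent $\tfrac{2\log q_{k}}{\log(q_{k}q_{k+1})}$ tends to $\tfrac{2}{w(\alpha)+1}<s$ along the subsequence, this sum is finite and tends to $0$ as $N\to\infty$, so $\mathcal{H}^{s}(\Omega_{\alpha}(1,1))=0$ and $\dim_{\rm H}\Omega_{\alpha}(1,1)\le\tfrac{2}{w(\alpha)+1}$.

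For (\rmnum{2}) assume $\liminf_{k}\tfrac{\log q_{k+1}}{\log q_{k}}>1$. For the lower bound, fix $\varepsilon>0$ and build a Cantor-type subset of $\Omega_{\alpha}(1,1)$ by letting, at each depth $k$, the digit $c_{k}$ range over a block of admissible values that is as large as possible at the depths where $\tfrac{\log q_{k+1}}{\log q_{k}}$ is near $w(\alpha)$ but globally small enough that every branch satisfies the digit-series condition (so the whole set lies in $\Omega_{\alpha}(1,1)$); then bound from below the local dimension of the natural measure on it via the mass distribution principle. The hypothesis $\liminf_{k}\tfrac{\log q_{k+1}}{\log q_{k}}>1$ enters here to guarantee that the scales $q_{k}$ are separated enough for the construction's cylinders to be genuinely nested with controlled contraction ratios, so that this local dimension equals $\tfrac{1}{w(\alpha)+1}$. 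The same separation also sharpens the covering of part (\rmnum{1}): with well-separated scales each admissible level-$k$ cylinder there splits further, the wasteful factor disappears, and $\Omega_{\alpha}(1,1)$ is covered at level $k$ by $\asymp q_{k}$ intervals of length $\asymp(q_{k}q_{k+1})^{-1}$, giving the improved bound $\tfrac{\log q_{k}}{\log(q_{k}q_{k+1})}\to\tfrac{1}{w(\alpha)+1}$. Matching the two bounds and letting $\varepsilon\to0$ gives $\dim_{\rm H}\Omega_{\alpha}(1,1)=\tfrac{1}{w(\alpha)+1}$.

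I expect the main obstacle to be the quantitative bookkeeping in two places. First, rewriting $\sum_{i}\delta_{i}(b_{i+1}-b_{i})$ as a tractable series in the Ostrowski data: the description of best inhomogeneous approximations involves intermediate best approximations and the alternating signs of $q_{k}\alpha-p_{k}$, which must be tracked carefully. Second, calibrating the count of admissible cylinders so that the exponents come out exactly as $\tfrac{2}{w(\alpha)+1}$ in (\rmnum{1}) and $\tfrac{1}{w(\alpha)+1}$ in (\rmnum{2}) rather than something larger; verifying that the separation hypothesis removes precisely the extra factor is the technical heart. A further point is to keep the mass-distribution estimate uniform in $\varepsilon$ so that the exact value, and not merely a lower bound, is recovered in (\rmnum{2}).
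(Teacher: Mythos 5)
Your reformulation of $\gamma\in\Omega_{\alpha}(1,1)$ in terms of the best inhomogeneous approximation denominators $b_{i}$ and errors $\delta_{i}$ is correct and is equivalent to the paper's criterion $S_{1}(\alpha,\gamma)=\sum_{Q\ge1}\min_{1\le q\le Q}\langle q\alpha-\gamma\rangle<\infty$ (Corollary \ref{41}): both just say that the largest decreasing competitor $\psi$ below the data has convergent $\sum\psi(q)$. Up to that point you are on the same page as the paper. After that, however, the two arguments diverge completely, and your version has a substantial gap.

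The paper does not run any covering or Cantor-set construction of its own. Instead it observes, via the Olivier-type Lemma \ref{54}, the two-sided inclusion
$$
\bigcup_{\tau>1}\mathcal{U}_{\tau}[\alpha]\;\subset\;\Gamma_{\alpha}(1,1)\;\subset\;\mathcal{U}_{1}[\alpha],
$$
where $\mathcal{U}_{\tau}[\alpha]$ is the Dirichlet uniformly well-approximated set, and then simply imports the dimension formulas of Kim and Liao (Lemmas \ref{92} and \ref{93}). Part (\rmnum{1}) is then the Kim--Liao upper bound $\dim_{\rm H}\mathcal{U}_{1}[\alpha]\le\frac{2}{w(\alpha)+1}$; part (\rmnum{2}) is the computation (Lemma \ref{304}) that under $\liminf_{k}\log q_{k+1}/\log q_{k}>1$ both bracketing sets have dimension exactly $\frac{1}{w(\alpha)+1}$. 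The entire Hausdorff-dimension machinery is thus outsourced. Your plan, by contrast, tries to reconstruct that machinery from scratch in the Ostrowski coding. The crucial steps there---the claim that at depth $k$ the set is covered by $\asymp q_{k}^{2}$ intervals of length $\asymp(q_{k}q_{k+1})^{-1}$, the Cantor-set/mass-distribution construction for the lower bound, and the improvement to $\asymp q_{k}$ intervals under the separation hypothesis---are precisely the content of the Kim--Liao theorem, and you only assert them. You acknowledge this yourself in the final paragraph: the ``quantitative bookkeeping'' you defer is not bookkeeping but the proof. In particular, the asserted counts of admissible Ostrowski cylinders, the passage from the digit-series condition to a cover at a fixed scale, and the verification that the measure built on your Cantor set satisfies the mass-distribution hypotheses all require real arguments (three-distance estimates, careful tracking of intermediate fractions, and the alternating sign of $q_{k}\alpha-p_{k}$) and none are given.

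So this is a genuine gap rather than a complete alternative proof. The fastest way to close it is exactly the paper's route: prove the two-sided inclusion between $\Gamma_{\alpha}(1,1)$ and the $\mathcal{U}_{\tau}[\alpha]$ family (an easy consequence of Olivier's theorem), and then invoke Kim--Liao. If you insist on a self-contained argument, you will essentially have to reprove Kim--Liao, and you should make the covering count and the Cantor construction explicit rather than sketch them; in particular you must justify why the factor $q_{k}^{2}$ (resp.\ $q_{k}$) and the length $(q_{k}q_{k+1})^{-1}$ are the correct parameters, and carry out the limit extraction that turns $\limsup\log q_{k+1}/\log q_{k}=w(\alpha)$ into the exponent $\frac{2}{w(\alpha)+1}$, resp.\ $\frac{1}{w(\alpha)+1}$.
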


\begin{remark}
Since
\begin{equation*}
\{\gamma\in[0,1):\Omega^{\gamma}(1,1)\neq\emptyset\}=\bigcup_{\alpha\in[0,1)}\Omega_{\alpha}(1,1),
\end{equation*}
it follows from Theorem $\ref{120}$ (\rmnum{2}) that
\begin{equation*}
\dim_{\rm H}(\{\gamma\in[0,1):\Omega^{\gamma}(1,1)\neq\emptyset\})\geq\frac{1}{2}.
\end{equation*}
\end{remark}

\begin{remark}
For any $\alpha\in[0,1)\cap\mathbb{Q}$, it follows from \cite[Proposition A.2]{FAR1} that
\begin{equation*}
\Omega_{\alpha}(1,1)=\{q\alpha+\lceil-q\alpha\rceil:q\in\mathbb{N}\},
\end{equation*}
where $\lceil x\rceil$ is the smallest integer not less than $x$.
\end{remark}

\begin{remark}
For any $\alpha\in[0,1)\setminus\mathbb{Q}$ with $w(\alpha)=+\infty$ (i.e. $\alpha$ is a Liouville number), it follows from Theorem $\ref{120}$ (\rmnum{1}) that $$\dim_{\rm H}(\Omega_{\alpha}(1,1))=0.$$
\end{remark}


\begin{remark}
Following the method in Ram\'{\i}rez's paper \cite[Lemmas 4.1 and 4.2]{FAR1}, we also can prove that $\Omega(m,n)$ is Lebesgue measurable. Since $\Omega(m,n)$ is not the main research object in this article, we omit the proof here. We refer the readers to \cite{FAR1} for more details. By Theorem $\ref{27}$ (\rmnum{2}) (or (\rmnum{3})) and Fubini's theorem, $\Omega(m,n)$ is a Lebesgue null set. Furthermore, denote by $\pi$ the projection $[0,1)^{m\times n}\times[0,1)^{m}\to[0,1)^{m\times n}$ to the first copy of $[0,1)^{m\times n}$. That is, $\pi(A,\boldsymbol{\gamma})=A$. Then $\pi(\Omega(m,n))=\bigcup\limits_{\boldsymbol{\gamma}\in[0,1)^{m}}\Omega^{\boldsymbol{\gamma}}(m,n)$. By Theorem $\ref{27}$ (\rmnum{1}), we have $$\dim_{\rm H}(\Omega(m,n))\geq mn.$$ What is more, in view of Theorem $\ref{27}$ (\rmnum{1}), Theorem $\ref{114}$ (\rmnum{2}) and \cite[Corollary 7.12]{KJF}, we obtain that for every $m,n\in\mathbb{N}$, $$\dim_{\rm H}(\Omega(m,n))\geq mn+m\left(\max\left(0,\frac{m-n}{m+n}\right)\right)^{2}.$$
\end{remark}
\subsection{The dual problem to Kurzweil type theorem}
The set $\Lambda(m,n)$ is a dual set to $\Omega(m,n)$, a natural question to Theorem $\ref{27}$ is how large the corresponding fiber sets $\Lambda^{\boldsymbol{\gamma}}(m,n)$ and $\Lambda_{A}(m,n)$ are. The following gives a complete characterization of $\Lambda(m,n)$.
\begin{theorem}\label{12}
We have $$\Lambda(m,n)=([0,1)^{m\times n}\times[0,1)^{m})\setminus\boldsymbol{\rm Bad}(m,n).$$
\end{theorem}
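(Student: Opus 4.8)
The plan is to prove the two inclusions separately. For the inclusion $\Lambda(m,n) \subseteq ([0,1)^{m\times n}\times[0,1)^m)\setminus\boldsymbol{\rm Bad}(m,n)$, I would argue by contraposition: suppose $(A,\boldsymbol{\gamma})\in\boldsymbol{\rm Bad}(m,n)$, so there is a constant $c>0$ with $\|\boldsymbol{q}\|^n\langle A\boldsymbol{q}-\boldsymbol{\gamma}\rangle^m \geq c$ for all sufficiently large $\|\boldsymbol{q}\|$. Then for any $\psi\in\mathcal{C}$, if $(A,\boldsymbol{\gamma})\in W_{m,n}(\psi)$ there would be infinitely many $\boldsymbol{q}$ with $\langle A\boldsymbol{q}-\boldsymbol{\gamma}\rangle < \psi(\|\boldsymbol{q}\|)$, hence $c \leq \|\boldsymbol{q}\|^n \psi(\|\boldsymbol{q}\|)^m$ for those $\boldsymbol{q}$, i.e. $\psi(\|\boldsymbol{q}\|) \geq (c/\|\boldsymbol{q}\|^n)^{1/m}$ infinitely often. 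Since $\psi$ is decreasing, this forces $\psi(q) \geq (c/q^n)^{1/m} \cdot(1+o(1))$ along an increasing sequence and hence (again by monotonicity) $q^{n-1}\psi(q)^m \gtrsim q^{-1}$ on long blocks of integers, which makes $\sum q^{n-1}\psi(q)^m$ diverge, contradicting $\psi\in\mathcal{C}$. The only delicate point here is to turn "$\psi$ large infinitely often" into "$\sum q^{n-1}\psi(q)^m=\infty$" using monotonicity; this is a standard Cauchy-condensation style argument: between consecutive indices $q_k < q_{k+1}$ where the bound is attained, $\psi(q)\geq \psi(q_{k+1})$ for all $q\leq q_{k+1}$, and one sums over a dyadic range.

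For the reverse inclusion $([0,1)^{m\times n}\times[0,1)^m)\setminus\boldsymbol{\rm Bad}(m,n)\subseteq\Lambda(m,n)$, suppose $(A,\boldsymbol{\gamma})\notin\boldsymbol{\rm Bad}(m,n)$, i.e. $\liminf_{\|\boldsymbol{q}\|\to\infty}\|\boldsymbol{q}\|^n\langle A\boldsymbol{q}-\boldsymbol{\gamma}\rangle^m = 0$. The goal is to produce a single $\psi\in\mathcal{C}$ with $(A,\boldsymbol{\gamma})\in W_{m,n}(\psi)$. Choose an increasing sequence $\boldsymbol{q}_k\in\mathbb{Z}^n$ with $\|\boldsymbol{q}_k\|\to\infty$ and $\|\boldsymbol{q}_k\|^n\langle A\boldsymbol{q}_k-\boldsymbol{\gamma}\rangle^m \to 0$; passing to a subsequence we may assume $\|\boldsymbol{q}_{k+1}\|\geq 2\|\boldsymbol{q}_k\|$ and $\langle A\boldsymbol{q}_k-\boldsymbol{\gamma}\rangle < \varepsilon_k$ where $\varepsilon_k\to 0$ can be taken to decrease as fast as we like. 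Now define a decreasing function $\psi$ by interpolating: set $\psi(q) = \langle A\boldsymbol{q}_k - \boldsymbol{\gamma}\rangle$ for $\|\boldsymbol{q}_{k-1}\| \leq q < \|\boldsymbol{q}_k\|$ (or more carefully, take $\psi$ to be the decreasing step function that equals $\langle A\boldsymbol{q}_k-\boldsymbol{\gamma}\rangle$ at $q=\|\boldsymbol{q}_k\|$ and is constant to the left until the next breakpoint, after first thinning the sequence so these values are genuinely decreasing). With this choice $\langle A\boldsymbol{q}_k-\boldsymbol{\gamma}\rangle \leq \psi(\|\boldsymbol{q}_k\|)$ for every $k$ — in fact we want strict inequality, so we replace $\psi$ by $2\psi$ or shift indices — giving $(A,\boldsymbol{\gamma})\in W_{m,n}(\psi)$. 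The convergence $\sum q^{n-1}\psi(q)^m < \infty$ is then arranged by making the $\boldsymbol{q}_k$ sparse enough: the contribution of the block $[\|\boldsymbol{q}_{k-1}\|, \|\boldsymbol{q}_k\|)$ is at most $\|\boldsymbol{q}_k\|^n \psi(\|\boldsymbol{q}_{k-1}\|)^m$-ish, which we control by choosing $\|\boldsymbol{q}_k\|$ so large (thinning the sequence — this is legitimate since the liminf being $0$ gives us infinitely many good $\boldsymbol{q}$ at every scale) that this block contributes at most $2^{-k}$.

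I expect the main obstacle to be the bookkeeping in the second inclusion: one must simultaneously (a) keep $\psi$ decreasing, (b) keep $\psi(\|\boldsymbol{q}_k\|)$ strictly above $\langle A\boldsymbol{q}_k-\boldsymbol{\gamma}\rangle$ for infinitely many $k$, and (c) keep the sum $\sum q^{n-1}\psi(q)^m$ convergent. These pull in different directions — (c) wants $\psi$ small and the jumps sparse, while (b) wants $\psi$ not too small at the breakpoints — but they are reconcilable precisely because $\|\boldsymbol{q}_k\|^n\langle A\boldsymbol{q}_k-\boldsymbol{\gamma}\rangle^m\to 0$: this says $\langle A\boldsymbol{q}_k-\boldsymbol{\gamma}\rangle$ is asymptotically smaller than $\|\boldsymbol{q}_k\|^{-n/m}$, so we have room to inflate it by a large factor $\lambda_k\to\infty$ (defining $\psi$ via the inflated values) while still keeping $\|\boldsymbol{q}_k\|^n\psi(\|\boldsymbol{q}_k\|)^m = \lambda_k^m\|\boldsymbol{q}_k\|^n\langle A\boldsymbol{q}_k-\boldsymbol{\gamma}\rangle^m\to 0$, and then choosing the gaps to the next breakpoint geometrically large makes the per-block sum a convergent geometric-type series. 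A clean way to package all of this is to first extract, for each $j\in\mathbb{N}$, an index $k(j)$ with $\|\boldsymbol{q}_{k(j)}\|^n\langle A\boldsymbol{q}_{k(j)}-\boldsymbol{\gamma}\rangle^m < 4^{-j}$ and $\|\boldsymbol{q}_{k(j+1)}\| > \|\boldsymbol{q}_{k(j)}\|^2$, then set $\psi(q) = 2^{j}\langle A\boldsymbol{q}_{k(j)}-\boldsymbol{\gamma}\rangle$ on $[\|\boldsymbol{q}_{k(j-1)}\|,\|\boldsymbol{q}_{k(j)}\|)$ — monotonicity and the two bullet-point estimates then follow by routine verification, which I would relegate to a short computation.
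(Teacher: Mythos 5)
Your proof is correct in outline and proves both inclusions the same way the paper does, but your construction of the auxiliary function $\psi$ in the second inclusion creates bookkeeping that the paper avoids entirely. You define $\psi$ as an inflated copy of the values $\langle A\boldsymbol{q}_{k(j)}-\boldsymbol{\gamma}\rangle$, namely $\psi(q)=2^{j}\langle A\boldsymbol{q}_{k(j)}-\boldsymbol{\gamma}\rangle$ on blocks. This forces you to thin the sequence to make $\psi$ monotone, and it has a degenerate case when some $\langle A\boldsymbol{q}_{k}-\boldsymbol{\gamma}\rangle=0$ (then $\psi$ becomes $0$ from some point on and the strict inequality $\langle A\boldsymbol{q}-\boldsymbol{\gamma}\rangle<\psi(\|\boldsymbol{q}\|)$ fails). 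You flag these issues yourself as "bookkeeping," and they are indeed repairable, but the paper simply sidesteps them. The paper first extracts $\boldsymbol{q}_i$ with $\|\boldsymbol{q}_i\|^n\langle A\boldsymbol{q}_i-\boldsymbol{\gamma}\rangle^m<2^{-i}$ and then sets
\begin{equation*}
\psi(q)=\left(\frac{1}{2^{i+1}\|\boldsymbol{q}_{i+1}\|^{n}}\right)^{1/m}
\quad\text{on}\quad \|\boldsymbol{q}_i\|<q\le\|\boldsymbol{q}_{i+1}\|.
\end{equation*}
This $\psi$ depends only on the increasing norms $\|\boldsymbol{q}_i\|$ and the index $i$, not on the possibly non-monotone or vanishing quantities $\langle A\boldsymbol{q}_i-\boldsymbol{\gamma}\rangle$, so it is automatically decreasing and strictly positive, the convergence $\sum q^{n-1}\psi(q)^m\le 1$ is immediate by telescoping, and $\langle A\boldsymbol{q}_i-\boldsymbol{\gamma}\rangle<\psi(\|\boldsymbol{q}_i\|)$ follows directly from the defining inequality $\|\boldsymbol{q}_i\|^n\langle A\boldsymbol{q}_i-\boldsymbol{\gamma}\rangle^m<2^{-i}$. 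For the other inclusion, your Cauchy-condensation argument amounts to re-deriving the paper's Lemma \ref{54} (the generalized Olivier theorem: $\psi$ decreasing and $\sum q^{n-1}\psi(q)^m<\infty$ imply $q^n\psi(q)^m\to 0$), which the paper just cites; again, both are fine, but citing the lemma and then observing $\langle A\boldsymbol{q}-\boldsymbol{\gamma}\rangle^m<\psi(\|\boldsymbol{q}\|)^m<\epsilon\|\boldsymbol{q}\|^{-n}$ infinitely often is shorter. The takeaway: when building a decreasing rate function, key it to the norms of your sparse sequence rather than to the error terms themselves, so monotonicity and positivity come for free.
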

By Theorem $\ref{12}$, we immediately get the following corollary, expressing Theorem $\ref{12}$ by fibers.
\begin{corollary}\label{13}
\begin{enumerate}[(i)]
\item For any $A\in[0,1)^{m\times n}$, we have
\begin{equation*}
 \Lambda_{A}(m,n)=[0,1)^{m}\setminus\boldsymbol{\rm Bad}_{A}(m,n) .
\end{equation*}
\item For any $\boldsymbol{\gamma}\in[0,1)^{m}$, we have
\begin{equation*}
 \Lambda^{\boldsymbol{\gamma}}(m,n)=[0,1)^{m\times n}\setminus \boldsymbol{\rm Bad}^{\boldsymbol{\gamma}}(m,n).
\end{equation*}
\end{enumerate}
\end{corollary}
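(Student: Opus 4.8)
The plan is to derive both identities as immediate restrictions of Theorem~\ref{12} to a fixed fiber, so that all the real content stays in that theorem and what remains is only bookkeeping with the definitions of the fibers.

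For part (i) I would fix $A\in[0,1)^{m\times n}$ and simply unwind definitions: for every $\boldsymbol{\gamma}\in[0,1)^{m}$ one has $\boldsymbol{\gamma}\in\Lambda_{A}(m,n)\iff(A,\boldsymbol{\gamma})\in\Lambda(m,n)$ and $\boldsymbol{\gamma}\in\boldsymbol{\rm Bad}_{A}(m,n)\iff(A,\boldsymbol{\gamma})\in\boldsymbol{\rm Bad}(m,n)$, while Theorem~\ref{12} gives $(A,\boldsymbol{\gamma})\in\Lambda(m,n)\iff(A,\boldsymbol{\gamma})\notin\boldsymbol{\rm Bad}(m,n)$. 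Chaining these equivalences yields $\boldsymbol{\gamma}\in\Lambda_{A}(m,n)\iff\boldsymbol{\gamma}\in[0,1)^{m}\setminus\boldsymbol{\rm Bad}_{A}(m,n)$, which is the claimed equality. For part (ii) I would run the same argument slicing along the other coordinate, fixing $\boldsymbol{\gamma}\in[0,1)^{m}$: for every $A\in[0,1)^{m\times n}$ one gets $A\in\Lambda^{\boldsymbol{\gamma}}(m,n)\iff(A,\boldsymbol{\gamma})\in\Lambda(m,n)\iff(A,\boldsymbol{\gamma})\notin\boldsymbol{\rm Bad}(m,n)\iff A\in[0,1)^{m\times n}\setminus\boldsymbol{\rm Bad}^{\boldsymbol{\gamma}}(m,n)$.

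The one place where I would pause is to confirm that the two descriptions of the fibers of $\Lambda(m,n)$ recorded in the Notations are consistent, i.e. that $\{A:(A,\boldsymbol{\gamma})\in\Lambda(m,n)\}=\bigcup_{\psi\in\mathcal{C}}W_{m,n}^{\boldsymbol{\gamma}}(\psi)$ and $\{\boldsymbol{\gamma}:(A,\boldsymbol{\gamma})\in\Lambda(m,n)\}=\bigcup_{\psi\in\mathcal{C}}W_{m,n,A}(\psi)$. This holds because the union defining $\Lambda(m,n)$ commutes with passing to a fiber: $(A,\boldsymbol{\gamma})\in\bigcup_{\psi\in\mathcal{C}}W_{m,n}(\psi)$ exactly when $(A,\boldsymbol{\gamma})\in W_{m,n}(\psi)$ for some $\psi\in\mathcal{C}$, and $(A,\boldsymbol{\gamma})\in W_{m,n}(\psi)\iff A\in W_{m,n}^{\boldsymbol{\gamma}}(\psi)\iff\boldsymbol{\gamma}\in W_{m,n,A}(\psi)$. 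Apart from this there is no obstacle: the corollary is a purely formal consequence of Theorem~\ref{12}, the crux being that restricting the product $[0,1)^{m\times n}\times[0,1)^{m}$ to a fixed value of one coordinate leaves the whole of the other factor intact, so the fiber of $([0,1)^{m\times n}\times[0,1)^{m})\setminus\boldsymbol{\rm Bad}(m,n)$ is exactly that full factor minus the corresponding fiber of $\boldsymbol{\rm Bad}(m,n)$.
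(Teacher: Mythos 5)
Your proposal is correct and matches the paper's approach exactly: the paper also presents Corollary~\ref{13} as an immediate restatement of Theorem~\ref{12} obtained by passing to fibers, with no additional argument needed. The extra consistency check you flag regarding the two descriptions of the fibers of $\Lambda(m,n)$ is a sensible thing to verify, and it goes through for exactly the reason you give.
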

\begin{remark}
It follows from Theorem $\ref{01}$ (Inhomogeneous Khintchine-Groshev theorem) that $\boldsymbol{\rm Bad}^{\boldsymbol{\gamma}}(m,n)$ is a Lebesgue null set for any $\boldsymbol{\gamma}\in[0,1)^{m}$. Since $\boldsymbol{\rm Bad}(m,n)$ is measurable, by Fubini's theorem, we know that $\boldsymbol{\rm Bad}(m,n)$ has measure zero and $\boldsymbol{\rm Bad}_{A}(m,n)$ has measure zero for almost all $A\in[0,1)^{m\times n}$. Hence,
\begin{equation*}
\mu_{m}(\Lambda_{A}(m,n))=1\ {\rm for\ almost\ all}\ A\in[0,1)^{m\times n}
\end{equation*}
and
\begin{equation*}
\mu_{mn}(\Lambda^{\boldsymbol{\gamma}}(m,n))=1\ {\rm for\ any}\ \boldsymbol{\gamma}\in[0,1)^{m}.
\end{equation*}
What is more, Bugeaud, Harrap, Kristensen and Velani \cite[Theorem 1]{YHKV} showed that $\dim_{\rm H}(\boldsymbol{\rm Bad}_{A}(m,n))=m$ for any $A\in[0,1)^{m\times n}$. Einsiedler and Tseng \cite[Theorem 1.1]{MJ} proved that $\dim_{\rm H}(\boldsymbol{\rm Bad}^{\boldsymbol{\gamma}}(m,n))=mn$ for every $\boldsymbol{\gamma}\in[0,1)^{m}$. It follows that
\begin{equation*}
\dim_{\rm H}([0,1)^{m}\setminus\Lambda_{A}(m,n))=m\ {\rm for\ any}\ A\in[0,1)^{m\times n}
\end{equation*}
 and
\begin{equation*}
\dim_{\rm H}([0,1)^{m\times n}\setminus\Lambda^{\boldsymbol{\gamma}}(m,n))=mn\ {\rm for\ every}\ \boldsymbol{\gamma}\in[0,1)^{m}.
\end{equation*}
\end{remark}
\subsection{Topological property for the set of approximation functions}
Recall that $\mathcal{C}$ is the set of all decreasing function $\psi:\mathbb{N}\to\mathbb{R}_{\geq0}$ such that the infinite series $\sum\limits_{q=1}^{\infty}q^{n-1}\psi(q)^{m}$ converges. The set $W_{m,n}(\psi)$ describes the set of well approximable pairs $(A,\boldsymbol{\gamma})$ for the given function $\psi$. A relative problem is how about the set of functions $\psi$ for given $(A,\boldsymbol{\gamma})\in[0,1)^{m\times n}\times[0,1)^{m}$.
Define
\begin{equation}\label{60}
\mathcal{C}(A,\boldsymbol{\gamma}):=\{\psi\in\mathcal{C}:\langle A\boldsymbol{q}-\boldsymbol{\gamma} \rangle<\psi(\|\boldsymbol{q}\|)\ {\rm for\ i.m.}\ \boldsymbol{q}\in\mathbb{Z}^{n}\}.
\end{equation}
To measure the sets in $\mathcal{C}$, we first define a reasonable metric. Let $d:\mathcal{C}\times\mathcal{C}\to[0,+\infty)$,
\begin{equation*}
d(\psi_{1},\psi_{2}):=\sum\limits_{q=1}^{\infty}q^{n-1}|\psi_{1}(q)^{m}-\psi_{2}(q)^{m}|,\ \forall\ \psi_{1},\psi_{2}\in\mathcal{C}.
\end{equation*}
Clearly, $d$ is a metric in $\mathcal{C}$ and the metric space $(\mathcal{C},d)$ is complete. A natural question is how large the set $\mathcal{C}(A,\boldsymbol{\gamma})$ is in the topological sense. From Theorem $\ref{12}$, we immediately obtain that the following corollary.
\begin{corollary}\label{10}
For any $(A,\boldsymbol{\gamma})\in[0,1)^{m\times n}\times[0,1)^{n}$, we have
\begin{equation*}
\mathcal{C}(A,\boldsymbol{\gamma})\neq\emptyset \quad \Leftrightarrow \quad (A,\boldsymbol{\gamma})\notin\boldsymbol{\rm Bad}(m,n).
\end{equation*}
\end{corollary}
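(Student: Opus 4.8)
The plan is to unwind the definitions and reduce the statement to Theorem \ref{12}. First I would observe that by the very definition of $\mathcal{C}(A,\boldsymbol{\gamma})$ in \eqref{60}, we have $\mathcal{C}(A,\boldsymbol{\gamma})\neq\emptyset$ precisely when there exists some $\psi\in\mathcal{C}$ for which $\langle A\boldsymbol{q}-\boldsymbol{\gamma}\rangle<\psi(\|\boldsymbol{q}\|)$ holds for infinitely many $\boldsymbol{q}\in\mathbb{Z}^{n}$; by the definition of $W_{m,n}(\psi)$ this says exactly that $(A,\boldsymbol{\gamma})\in W_{m,n}(\psi)$ for some $\psi\in\mathcal{C}$. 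Since $\Lambda(m,n)=\bigcup_{\psi\in\mathcal{C}}W_{m,n}(\psi)$, this is in turn equivalent to $(A,\boldsymbol{\gamma})\in\Lambda(m,n)$.

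Next I would simply invoke Theorem \ref{12}, which identifies $\Lambda(m,n)$ with $([0,1)^{m\times n}\times[0,1)^{m})\setminus\boldsymbol{\rm Bad}(m,n)$. Chaining the two equivalences yields
\begin{equation*}
\mathcal{C}(A,\boldsymbol{\gamma})\neq\emptyset\iff(A,\boldsymbol{\gamma})\in\Lambda(m,n)\iff(A,\boldsymbol{\gamma})\notin\boldsymbol{\rm Bad}(m,n),
\end{equation*}
which is the assertion of the corollary.

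There is no genuine obstacle here: the entire mathematical content is carried by Theorem \ref{12}, and the deduction is merely a rewriting of the union defining $\Lambda(m,n)$ in the language of approximation functions. The only point that deserves a line of care is to check that the ``for infinitely many $\boldsymbol{q}$'' clause in the definition of $W_{m,n}(\psi)$ coincides verbatim with the clause appearing in \eqref{60}, so that ``$(A,\boldsymbol{\gamma})\in W_{m,n}(\psi)$'' and ``$\psi\in\mathcal{C}(A,\boldsymbol{\gamma})$'' are literally the same condition; once this is noted, the proof is complete.
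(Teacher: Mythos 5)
Your argument is exactly the intended one: the paper presents Corollary~\ref{10} as an immediate restatement of Theorem~\ref{12}, obtained by unwinding the definitions of $\mathcal{C}(A,\boldsymbol{\gamma})$ and $\Lambda(m,n)=\bigcup_{\psi\in\mathcal{C}}W_{m,n}(\psi)$, precisely as you do. Your careful note that membership in $W_{m,n}(\psi)$ and membership of $\psi$ in $\mathcal{C}(A,\boldsymbol{\gamma})$ are literally the same condition is the whole content of the deduction, so the proposal is correct and matches the paper's approach.
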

\begin{remark}
Corollary $\ref{10}$ is just another statement of Theorem $\ref{12}$.
\end{remark}
The following theorem shows some topological properties of $\mathcal{C}(A,\boldsymbol{\gamma})$ when $\mathcal{C}(A,\boldsymbol{\gamma})$ is not an empty set.
\begin{theorem}\label{14}
For any $(A,\boldsymbol{\gamma})\in\left([0,1)^{m\times n}\times[0,1)^{m}\right)\setminus\boldsymbol{\rm Bad}(m,n)$, we have
\begin{enumerate}[(i)]
\item $\mathcal{C}(A,\boldsymbol{\gamma})$ is a $G_{\delta}$ set and dense in $\mathcal{C}$;
\item $\mathcal{C}(A,\boldsymbol{\gamma})$ is not a $F_{\sigma}$ set in $\mathcal{C}$.
\end{enumerate}
\end{theorem}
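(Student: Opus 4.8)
The plan is to prove the two assertions in turn, obtaining (ii) from (i) via a Baire category argument. Throughout I use that $(\mathcal{C},d)$ is a nonempty complete metric space, hence a Baire space.

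For the $G_\delta$ part of (i), I first note that for each fixed $k\in\mathbb{N}$ the evaluation map $\psi\mapsto\psi(k)$ is continuous on $(\mathcal{C},d)$: from $k^{n-1}|\psi_1(k)^m-\psi_2(k)^m|\le d(\psi_1,\psi_2)$ the map $\psi\mapsto\psi(k)^m$ is Lipschitz, and one composes with the continuous $m$-th root. Consequently, for every $\boldsymbol{q}\in\mathbb{Z}^n$ the set $E_{\boldsymbol{q}}:=\{\psi\in\mathcal{C}:\psi(\|\boldsymbol{q}\|)>\langle A\boldsymbol{q}-\boldsymbol{\gamma}\rangle\}$ is open, and since each ball $\{\boldsymbol{q}\in\mathbb{Z}^n:\|\boldsymbol{q}\|\le N\}$ is finite, the defining condition of $\mathcal{C}(A,\boldsymbol{\gamma})$ can be rewritten as
\[
\mathcal{C}(A,\boldsymbol{\gamma})=\bigcap_{N=1}^{\infty}\bigcup_{\|\boldsymbol{q}\|\ge N}E_{\boldsymbol{q}},
\]
which is a $G_\delta$ set.

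For the density in (i), fix $\psi\in\mathcal{C}$ and $\varepsilon>0$. Since $(A,\boldsymbol{\gamma})\notin\boldsymbol{\rm Bad}(m,n)$, we have $\liminf_{\|\boldsymbol{q}\|\to\infty}\|\boldsymbol{q}\|^{n}\langle A\boldsymbol{q}-\boldsymbol{\gamma}\rangle^{m}=0$, so there are $\boldsymbol{q}_j\in\mathbb{Z}^n$ with $k_j:=\|\boldsymbol{q}_j\|$ strictly increasing and $\eta_j^m k_j^n\le 2^{-j}$, where $\eta_j:=\langle A\boldsymbol{q}_j-\boldsymbol{\gamma}\rangle$. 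Put $\delta_j:=k_j^{-n/m}2^{-j/m}$, let $h_j$ be the nonincreasing ``box'' function equal to $\eta_j+\delta_j$ on $\{1,\dots,k_j\}$ and $0$ thereafter, and, for a large integer $J$, set
\[
\tilde\psi(q):=\max\Bigl(\psi(q),\ \sup_{j\ge J}h_j(q)\Bigr).
\]
Then $\tilde\psi$ is nonincreasing (a supremum of nonincreasing functions) with $\tilde\psi(k_j)\ge\eta_j+\delta_j>\langle A\boldsymbol{q}_j-\boldsymbol{\gamma}\rangle$ for all $j\ge J$, and from $\tilde\psi(q)^m-\psi(q)^m\le\sum_{j\ge J}h_j(q)^m$ together with $\sum_{q=1}^{k}q^{n-1}\le k^{n}$ one gets
\[
d(\psi,\tilde\psi)=\sum_{q=1}^{\infty}q^{n-1}\bigl(\tilde\psi(q)^m-\psi(q)^m\bigr)\le\sum_{j\ge J}(\eta_j+\delta_j)^m k_j^{n}\le 2^{m+1}\sum_{j\ge J}2^{-j}.
\]
Hence $\tilde\psi\in\mathcal{C}$, $d(\psi,\tilde\psi)<\varepsilon$ once $J$ is large, and $\tilde\psi\in\mathcal{C}(A,\boldsymbol{\gamma})$ since the distinct vectors $\boldsymbol{q}_j$ with $j\ge J$ witness infinitely many solutions; thus $\mathcal{C}(A,\boldsymbol{\gamma})$ is dense.

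For (ii), I first observe that the complement $\mathcal{C}\setminus\mathcal{C}(A,\boldsymbol{\gamma})$ is also dense: for $\psi\in\mathcal{C}$ the truncation $\psi_Q$ (equal to $\psi$ on $\{1,\dots,Q\}$ and $0$ afterwards) lies in $\mathcal{C}$, satisfies $d(\psi,\psi_Q)=\sum_{q>Q}q^{n-1}\psi(q)^m\to0$ as $Q\to\infty$, and lies outside $\mathcal{C}(A,\boldsymbol{\gamma})$ because $\langle A\boldsymbol{q}-\boldsymbol{\gamma}\rangle<\psi_Q(\|\boldsymbol{q}\|)$ forces $\|\boldsymbol{q}\|\le Q$. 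Now suppose, for contradiction, $\mathcal{C}(A,\boldsymbol{\gamma})=\bigcup_{k}F_k$ with each $F_k$ closed. Since $F_k\subseteq\mathcal{C}(A,\boldsymbol{\gamma})$, the open set $\mathcal{C}\setminus F_k$ contains the dense set $\mathcal{C}\setminus\mathcal{C}(A,\boldsymbol{\gamma})$, hence is open and dense, so every $F_k$ is nowhere dense and $\mathcal{C}(A,\boldsymbol{\gamma})$ is meager; but by (i) it is a dense $G_\delta$, so its complement is meager as well, making $\mathcal{C}$ a union of two meager sets and contradicting the Baire category theorem. Therefore $\mathcal{C}(A,\boldsymbol{\gamma})$ is not $F_\sigma$. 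The main obstacle is the density construction in (i) — enlarging $\psi$ above $\langle A\boldsymbol{q}_j-\boldsymbol{\gamma}\rangle$ at infinitely many arguments while keeping the result monotone, summable, and within $\varepsilon$ in the $d$-metric — which is exactly where the hypothesis $(A,\boldsymbol{\gamma})\notin\boldsymbol{\rm Bad}(m,n)$ enters, through the rapidly decaying sequence $\eta_j^m k_j^n\to0$; once density of $\mathcal{C}(A,\boldsymbol{\gamma})$ and of its complement is in hand, (ii) is the routine principle that a dense $G_\delta$ in a Baire space with dense complement cannot be $F_\sigma$.
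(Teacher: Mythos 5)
Your proposal is correct and follows essentially the same route as the paper: exhibit $\mathcal{C}(A,\boldsymbol{\gamma})$ as a countable intersection of open sets, prove density by perturbing an arbitrary $\psi\in\mathcal{C}$ slightly using a sequence $\boldsymbol{q}_j$ with $\|\boldsymbol{q}_j\|^{n}\langle A\boldsymbol{q}_j-\boldsymbol{\gamma}\rangle^{m}$ decaying geometrically, then show the complement is dense via truncation and derive (ii) from the Baire category theorem. The only superficial difference is in the density step, where you take a pointwise maximum of $\psi$ with box functions while the paper instead adds a piecewise-constant geometric correction to $\psi^{m}$; both produce a nonincreasing function in $\mathcal{C}$ within $\varepsilon$ of $\psi$ that witnesses infinitely many solutions, so the two arguments are interchangeable.
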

\begin{remark}
In view of Theorem $\ref{14}$ (\rmnum{1}) and Baire category theorem (see Section $\ref{600}$), we know that $\mathcal{C}(A,\boldsymbol{\gamma})$ is of second category in $\mathcal{C}$. Theorem $\ref{14}$ implies that in case $\mathcal{C}(A,\boldsymbol{\gamma})\neq\emptyset$, the set $\mathcal{C}(A,\boldsymbol{\gamma})$ is ``large'' in the sense of topology.
\end{remark}
From Theorem $\ref{14}$ and Baire category theorem, we obtain the following corollary.
\begin{corollary}\label{15}
For any $\{(A_{i},\boldsymbol{\gamma}_{i})\}_{i=1}^{\infty}\subset\left([0,1)^{m\times n}\times[0,1)^{m}\right)\setminus\boldsymbol{\rm Bad}(m,n)$, we have
\begin{equation*}
\bigcap_{i=1}^{\infty}\mathcal{C}(A_{i},\boldsymbol{\gamma}_{i})\ {is\ of\ second\ category\ and\ dense\ in}\ \mathcal{C}.
\end{equation*}
\end{corollary}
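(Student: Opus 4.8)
The plan is to obtain Corollary \ref{15} as an immediate consequence of Theorem \ref{14}(i) together with the Baire category theorem applied to the complete metric space $(\mathcal{C},d)$. First, by hypothesis each pair $(A_i,\boldsymbol{\gamma}_i)$ lies in $\left([0,1)^{m\times n}\times[0,1)^{m}\right)\setminus\boldsymbol{\rm Bad}(m,n)$, so Theorem \ref{14}(i) applies for every index $i$: the set $\mathcal{C}(A_i,\boldsymbol{\gamma}_i)$ is a dense $G_\delta$ subset of $\mathcal{C}$. Hence we may write $\mathcal{C}(A_i,\boldsymbol{\gamma}_i)=\bigcap_{j=1}^{\infty}U_{i,j}$ with each $U_{i,j}$ open in $\mathcal{C}$; since $U_{i,j}\supseteq\mathcal{C}(A_i,\boldsymbol{\gamma}_i)$ and the latter is dense, each $U_{i,j}$ is in fact open and dense.

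Next I would pass to the intersection over all $i$. Reindexing the countable family $\{U_{i,j}\}_{i,j\geq1}$ as a single sequence of open dense subsets of $\mathcal{C}$, we have
$$\bigcap_{i=1}^{\infty}\mathcal{C}(A_i,\boldsymbol{\gamma}_i)=\bigcap_{i=1}^{\infty}\bigcap_{j=1}^{\infty}U_{i,j},$$
which is again a countable intersection of open dense sets, hence a $G_\delta$ subset of $\mathcal{C}$. Because $(\mathcal{C},d)$ is complete (as recorded right after the definition of $d$), the Baire category theorem guarantees that this intersection is dense in $\mathcal{C}$.

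Finally, to see that the intersection is of second category, I would argue that its complement is meager. For each pair $(i,j)$ the set $\mathcal{C}\setminus U_{i,j}$ is closed with empty interior (since $U_{i,j}$ is open and dense), so
$$\mathcal{C}\setminus\bigcap_{i,j}U_{i,j}=\bigcup_{i,j}\bigl(\mathcal{C}\setminus U_{i,j}\bigr)$$
is of first category in $\mathcal{C}$. If $\bigcap_{i=1}^{\infty}\mathcal{C}(A_i,\boldsymbol{\gamma}_i)$ were also of first category, then $\mathcal{C}$ would be the union of two first-category sets and thus of first category, contradicting the Baire category theorem (a nonempty complete metric space is non-meager in itself). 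Therefore $\bigcap_{i=1}^{\infty}\mathcal{C}(A_i,\boldsymbol{\gamma}_i)$ is of second category and dense in $\mathcal{C}$.

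There is essentially no substantive obstacle here beyond bookkeeping: all the real content has been absorbed into Theorem \ref{14}(i) and the completeness of $(\mathcal{C},d)$. The only point worth a remark is that ``second category'' must be read as ``non-meager'', which needs $\mathcal{C}$ itself to be non-meager, and this is exactly Baire's theorem for the complete space $(\mathcal{C},d)$; one should also note in passing that $\mathcal{C}\neq\emptyset$ (for instance $q\mapsto q^{-(n+1)/m}$ belongs to $\mathcal{C}$, as $\sum_q q^{n-1}q^{-(n+1)}=\sum_q q^{-2}<\infty$), so the statement is not vacuous.
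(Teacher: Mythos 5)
Your proof is correct and follows essentially the same route as the paper: both reduce to a countable intersection of open dense sets in the complete space $(\mathcal{C},d)$ and invoke Baire's theorem. The only cosmetic difference is that the paper reuses the explicit decomposition $\mathcal{C}(A_i,\boldsymbol{\gamma}_i)=\bigcap_{k\geq1}\mathcal{C}_{k}(A_i,\boldsymbol{\gamma}_i)$ from the proof of Theorem \ref{14}, whereas you invoke the abstract fact that a dense $G_\delta$ set can be written as a countable intersection of dense open sets; your closing remarks on second category and on $\mathcal{C}\neq\emptyset$ are correct and make explicit a step the paper leaves implicit.
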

\begin{remark}
Corollary $\ref{15}$ implies that for any $\{(A_{i},\boldsymbol{\gamma}_{i})\}_{i=1}^{\infty}\subset[0,1)^{m\times n}\times[0,1)^{m}$ with $\liminf\limits_{\boldsymbol{q}\in\mathbb{Z}^{n},\|\boldsymbol{q}\|\to\infty}\|\boldsymbol{q} \|^{n}\langle A_{i}\boldsymbol{q}-\boldsymbol{\gamma_{i}}\rangle^{m}=0$, there exists $\psi\in \mathcal{C}$, such that for each $i\in\mathbb{N}$, $\langle A_{i}\boldsymbol{q}-\boldsymbol{\gamma}_{i}\rangle<\psi(\|\boldsymbol{q}\|)$ for infinitely many $\boldsymbol{q}\in\mathbb{Z}^{n}$.
\end{remark}
The remainder of this paper is organized as follows. In Section $\ref{200}$, we give some lemmas, which play an important role in the proof of Theorems $\ref{27}$, $\ref{114}$, $\ref{120}$ and $\ref{12}$. In Section $\ref{300}$, we give the proof of Theorem $\ref{27}$. The proof of Theorem $\ref{114}$, Theorem $\ref{120}$ and Corollary $\ref{115}$ are given in Section $\ref{400}$. In Section $\ref{500}$, we prove Theorem $\ref{12}$. Section $\ref{600}$ is dedicated to the proof of Theorem $\ref{14}$ and Corollary $\ref{15}$.

\section{Preliminary}\label{200}
In this section, we give some lemmas, which take an important part in the proof of Theorems $\ref{27}$, $\ref{114}$, $\ref{120}$ and $\ref{12}$.

For $(A,\boldsymbol{\gamma})\in[0,1)^{m\times n}\times[0,1)^{m}$ and $l\in\mathbb{N}$, let
\begin{equation*}
S_{l}(A,\boldsymbol{\gamma}):=\sum_{t=l}^{\infty}t^{n-1}\cdot\left(\min_{\boldsymbol{q}\in\mathbb{Z}^{n},l\leq\|\boldsymbol{q}\|\leq t}\langle A\boldsymbol{q}-\boldsymbol{\gamma}\rangle^{m}\right).
\end{equation*}
These sums will take an essential role here, especially through the use of Lemma $\ref{36}$. In the following, for each $l\in\mathbb{N}$, we will denote $\min\limits_{\boldsymbol{q}\in\mathbb{Z}^{n},l\leq\|\boldsymbol{q}\|\leq t}$ by $\min\limits_{l\leq\|\boldsymbol{q}\|\leq t}$ without confusion. The following lemma gives a characterization of $\Omega(m,n)$ by the convergence of the series $S_{l}(A,\boldsymbol{\gamma})$.
\begin{lemma}\label{36}
For any $(A,\boldsymbol{\gamma})\in[0,1)^{m\times n}\times[0,1)^{m}$, we have
\begin{equation*}
(A,\boldsymbol{\gamma})\in \Omega(m,n) \quad {\rm if\ and\ only\ if} \quad S_{l}(A,\boldsymbol{\gamma})<+\infty\ {\rm for\ all}\ l\in\mathbb{N}.
\end{equation*}
\end{lemma}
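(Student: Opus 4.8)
\textbf{Proof proposal for Lemma \ref{36}.}

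The plan is to connect the defining condition of $\Omega(m,n)$, namely $(A,\boldsymbol{\gamma})\in W_{m,n}(\psi)$ for \emph{every} $\psi\in\mathcal{D}$, with the finiteness of the sums $S_l(A,\boldsymbol{\gamma})$ through a two-sided comparison argument. For the direction ``$(A,\boldsymbol{\gamma})\in\Omega(m,n)\Rightarrow S_l<\infty$ for all $l$'', I would argue by contraposition: suppose $S_{l_0}(A,\boldsymbol{\gamma})=+\infty$ for some $l_0$. I want to build a single decreasing $\psi\in\mathcal{D}$ (that is, with $\sum q^{n-1}\psi(q)^m$ divergent) such that $(A,\boldsymbol{\gamma})\notin W_{m,n}(\psi)$, i.e. $\langle A\boldsymbol{q}-\boldsymbol{\gamma}\rangle\geq\psi(\|\boldsymbol{q}\|)$ for all but finitely many $\boldsymbol{q}$. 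The natural candidate is to set, for $t\geq l_0$,
\begin{equation*}
\psi(t):=\min_{\boldsymbol{q}\in\mathbb{Z}^n,\ l_0\leq\|\boldsymbol{q}\|\leq t}\langle A\boldsymbol{q}-\boldsymbol{\gamma}\rangle,
\end{equation*}
and $\psi(t)=\psi(l_0)$ for $t<l_0$. This $\psi$ is manifestly non-increasing, and $\sum_t t^{n-1}\psi(t)^m=S_{l_0}(A,\boldsymbol{\gamma})=+\infty$ up to the finitely many initial terms, so $\psi\in\mathcal{D}$. Moreover, if $\|\boldsymbol{q}\|\geq l_0$ then $\langle A\boldsymbol{q}-\boldsymbol{\gamma}\rangle\geq\min_{l_0\leq\|\boldsymbol{q}'\|\leq\|\boldsymbol{q}\|}\langle A\boldsymbol{q}'-\boldsymbol{\gamma}\rangle=\psi(\|\boldsymbol{q}\|)$, so the inequality $\langle A\boldsymbol{q}-\boldsymbol{\gamma}\rangle<\psi(\|\boldsymbol{q}\|)$ can hold for at most the finitely many $\boldsymbol{q}$ with $\|\boldsymbol{q}\|<l_0$; hence $(A,\boldsymbol{\gamma})\notin W_{m,n}(\psi)$ and $(A,\boldsymbol{\gamma})\notin\Omega(m,n)$. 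One subtlety: $\psi$ could equal $0$ from some point on (if $A\boldsymbol{q}-\boldsymbol{\gamma}\in\mathbb{Z}^m$ for some $\boldsymbol{q}$ with $\|\boldsymbol{q}\|\geq l_0$), but then $S_{l_0}<\infty$ trivially, contradicting our assumption, so this case does not arise; still, it is worth a sentence.

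For the converse, ``$S_l(A,\boldsymbol{\gamma})<\infty$ for all $l\Rightarrow(A,\boldsymbol{\gamma})\in\Omega(m,n)$'', I take an arbitrary $\psi\in\mathcal{D}$ and must show $\langle A\boldsymbol{q}-\boldsymbol{\gamma}\rangle<\psi(\|\boldsymbol{q}\|)$ for infinitely many $\boldsymbol{q}$. Suppose not: then there is $l$ such that $\langle A\boldsymbol{q}-\boldsymbol{\gamma}\rangle\geq\psi(\|\boldsymbol{q}\|)$ for all $\boldsymbol{q}$ with $\|\boldsymbol{q}\|\geq l$. Consequently, for every $t\geq l$, $\min_{l\leq\|\boldsymbol{q}\|\leq t}\langle A\boldsymbol{q}-\boldsymbol{\gamma}\rangle\geq\min_{l\leq\|\boldsymbol{q}\|\leq t}\psi(\|\boldsymbol{q}\|)=\psi(t)$ since $\psi$ is decreasing. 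Summing against $t^{n-1}$ gives $S_l(A,\boldsymbol{\gamma})\geq\sum_{t\geq l}t^{n-1}\psi(t)^m=+\infty$ because $\psi\in\mathcal{D}$, contradicting the hypothesis $S_l(A,\boldsymbol{\gamma})<\infty$. Hence $(A,\boldsymbol{\gamma})\in W_{m,n}(\psi)$, and since $\psi\in\mathcal{D}$ was arbitrary, $(A,\boldsymbol{\gamma})\in\Omega(m,n)$.

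The argument is essentially a monotone comparison: the ``envelope'' function $\psi$ defined by the running minima of $\langle A\boldsymbol{q}-\boldsymbol{\gamma}\rangle$ is extremal, so it simultaneously witnesses non-approximability when $S_{l_0}$ diverges and dominates every decreasing $\psi$ when the relevant sums converge. I expect the main point requiring care—not so much an obstacle as a bookkeeping issue—to be the interplay between the index $l$ in $S_l$ and the ``infinitely many vs.\ finitely many $\boldsymbol{q}$'' quantifiers: one must be careful that the finitely many vectors with $\|\boldsymbol{q}\|<l$ never affect either the divergence of the tail sums or the i.m.\ condition, and that the running minimum over the annulus $l\le\|\boldsymbol{q}\|\le t$ behaves monotonically in $t$ (which it does, being a minimum over an increasing family of finite sets). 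A secondary point is to confirm that deleting finitely many terms does not change whether $\sum q^{n-1}\psi(q)^m$ converges, which is immediate, and that the constructed $\psi$ genuinely lies in $\mathcal{D}$ (decreasing, $\mathbb{N}\to\mathbb{R}_{\ge0}$, divergent sum), all of which follow from the above.
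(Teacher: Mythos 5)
Your proof is correct and follows essentially the same approach as the paper: the forward direction constructs the same envelope function $\psi_0(t)=\min_{l_0\le\|\boldsymbol{q}\|\le t}\langle A\boldsymbol{q}-\boldsymbol{\gamma}\rangle$ and compares its partial sums to $S_{l_0}$, while your converse direction is just the contrapositive of the paper's direct argument (the paper finds, for each $l$, a witness $t_l$ with $\min_{l\le\|\boldsymbol{q}\|\le t_l}\langle A\boldsymbol{q}-\boldsymbol{\gamma}\rangle<\psi(t_l)$ by comparing the convergent $S_l$ against the divergent $\sum t^{n-1}\psi(t)^m$, which is the same comparison you run in reverse). Your extra remark handling the case where the envelope vanishes eventually is a small bonus the paper leaves implicit.
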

\begin{proof}[\rm \textbf{Proof}] Firstly we prove the ``only if'' part by contradiction. Let $(A,\boldsymbol{\gamma})\in\Omega(m,n)$. Suppose that there exists $l_{0}\in\mathbb{N}$ such that $S_{l_{0}}(A,\boldsymbol{\gamma})=+\infty$. Choose a decreasing function $\psi_{0}:\mathbb{N}\to\mathbb{R}_{\geq0}$ satisfying
\begin{equation*}
\psi_{0}(t)=\min_{l_{0}\leq\|\boldsymbol{q}\|\leq t}\langle A\boldsymbol{q}-\boldsymbol{\gamma}\rangle \quad {\rm for\ all}\ t\geq l_{0}.
\end{equation*}
Then $\sum\limits_{t=l_{0}}^{+\infty}t^{n-1}\psi_{0}(t)^{m}=S_{l_{0}}(A,\boldsymbol{\gamma})=+\infty$, which gives $\psi_{0}\in\mathcal{D}$. Note that for any $\|\boldsymbol{q}\|\geq l_{0}$, $\psi_{0}(\|\boldsymbol{q}\|)\leq\langle A\boldsymbol{q}-\boldsymbol{\gamma}\rangle$, we have $(A,\boldsymbol{\gamma})\notin W_{m,n}(\psi_{0})$. It follows that $(A,\boldsymbol{\gamma})\notin\Omega(m,n)$, which contradicts with $(A,\boldsymbol{\gamma})\in\Omega(m,n)$. Therefore $S_{l}(A,\boldsymbol{\gamma})<\infty$ for each $l\in\mathbb{N}$. \\ Secondly we prove the ``if'' part. Given any $\psi\in\mathcal{D}$. For any fixed $l\in\mathbb{N}$, since the series $S_{l}(A,\boldsymbol{\gamma})=\sum\limits_{t=l}^{+\infty}t^{n-1}\left(\min\limits_{l\leq\|\boldsymbol{q}\|\leq t}\langle A\boldsymbol{q}-\boldsymbol{\gamma}\rangle\right)^{m}$ converges and $\sum\limits_{t=l}^{+\infty}t^{n-1}\psi(t)^{m}$ diverges, we know that there exists $t_{l}\geq l$, such that
\begin{equation*}
t_{l}^{n-1}\left(\min_{l\leq\|\boldsymbol{q}\|\leq t_{l}}\langle A\boldsymbol{q}-\boldsymbol{\gamma}\rangle\right)^{m}<t_{l}^{n-1}\psi(t_{l})^{m},
\end{equation*}
that is,
\begin{equation*}
\min_{l\leq\|\boldsymbol{q}\|\leq t_{l}}\langle A\boldsymbol{q}-\boldsymbol{\gamma}\rangle<\psi(t_{l}).
\end{equation*}
Denote by $\boldsymbol{q}_{l}$ with $l\leq\|\boldsymbol{q}_{l}\|\leq t_{l}$ and
\begin{equation*}
\langle A\boldsymbol{q}_{l}-\boldsymbol{\gamma}\rangle=\min_{l\leq\|\boldsymbol{q}\|\leq t_{l}}\langle A\boldsymbol{q}-\boldsymbol{\gamma}\rangle.
\end{equation*}
So $\langle A\boldsymbol{q}_{l}-\boldsymbol{\gamma}\rangle<\psi(t_{l})$. Since $\psi$ is non-increasing, we have
\begin{equation*}
\langle A\boldsymbol{q}_{l}-\boldsymbol{\gamma}\rangle<\psi(t_{l})\leq\psi(\|\boldsymbol{q}_{l}\|).
\end{equation*}
That is, $\langle A\boldsymbol{q}_{l}-\boldsymbol{\gamma}\rangle<\psi(\|\boldsymbol{q}_{l}\|)$. The fact $\|\boldsymbol{q}_{l}\|\geq l$ and arbitrariness of $l\in\mathbb{N}$ can guarantee that there exists infinitely many $\boldsymbol{q}\in\mathbb{Z}^{n}$ such that $\langle A\boldsymbol{q}-\boldsymbol{\gamma}\rangle<\psi(\|\boldsymbol{q}\|)$. Thus $(A,\boldsymbol{\gamma})\in W_{m,n}(\psi)$, which implies that $(A,\boldsymbol{\gamma})\in\Omega(m,n)$ by the arbitrariness of $\psi\in\mathcal{D}$.
\textrm{}
\end{proof}

\begin{lemma}\label{43}
If $\langle A\boldsymbol{q}-\boldsymbol{\gamma}\rangle>0$ for any $\boldsymbol{q}\in\mathbb{Z}^{n}\setminus\{\boldsymbol{0}\}$, then the series $S_{l}(A,\boldsymbol{\gamma})$ either converges for all $l\in\mathbb{N}$, or diverges for all $l\in\mathbb{N}$.
\end{lemma}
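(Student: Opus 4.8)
The plan is to reduce the statement to the one-step equivalence that for every $l\in\mathbb{N}$,
\begin{equation*}
S_{l}(A,\boldsymbol{\gamma})<+\infty \quad\Longleftrightarrow\quad S_{l+1}(A,\boldsymbol{\gamma})<+\infty;
\end{equation*}
an induction on $l$ then shows that $\{l\in\mathbb{N}:S_{l}(A,\boldsymbol{\gamma})<+\infty\}$ is either empty or all of $\mathbb{N}$, which is exactly the asserted dichotomy.

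One direction of the one-step equivalence is soft and does not use the hypothesis: for $t\geq l+1$ the index set $\{\boldsymbol{q}:l\leq\|\boldsymbol{q}\|\leq t\}$ contains $\{\boldsymbol{q}:l+1\leq\|\boldsymbol{q}\|\leq t\}$, hence
\begin{equation*}
\min_{l\leq\|\boldsymbol{q}\|\leq t}\langle A\boldsymbol{q}-\boldsymbol{\gamma}\rangle\ \leq\ \min_{l+1\leq\|\boldsymbol{q}\|\leq t}\langle A\boldsymbol{q}-\boldsymbol{\gamma}\rangle ,
\end{equation*}
and comparing term by term gives $S_{l}(A,\boldsymbol{\gamma})\leq l^{n-1}\bigl(\min_{\|\boldsymbol{q}\|=l}\langle A\boldsymbol{q}-\boldsymbol{\gamma}\rangle\bigr)^{m}+S_{l+1}(A,\boldsymbol{\gamma})$. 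The first summand on the right is finite (a minimum over the finite shell $\|\boldsymbol{q}\|=l$), so convergence of $S_{l+1}(A,\boldsymbol{\gamma})$ forces convergence of $S_{l}(A,\boldsymbol{\gamma})$.

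For the reverse direction I would set $c_{l}:=\min_{\|\boldsymbol{q}\|=l}\langle A\boldsymbol{q}-\boldsymbol{\gamma}\rangle$; since the shell $\{\boldsymbol{q}\in\mathbb{Z}^{n}:\|\boldsymbol{q}\|=l\}$ is finite and omits $\boldsymbol{0}$ (this is where $l\geq1$ enters), the hypothesis $\langle A\boldsymbol{q}-\boldsymbol{\gamma}\rangle>0$ for all $\boldsymbol{q}\neq\boldsymbol{0}$ yields $c_{l}>0$. Writing $a_{t}:=\min_{l+1\leq\|\boldsymbol{q}\|\leq t}\langle A\boldsymbol{q}-\boldsymbol{\gamma}\rangle$, which is non-increasing in $t$, one has $\min_{l\leq\|\boldsymbol{q}\|\leq t}\langle A\boldsymbol{q}-\boldsymbol{\gamma}\rangle=\min(c_{l},a_{t})$ for $t\geq l+1$, and I would then split into two cases. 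If $a_{t}\geq c_{l}$ for every $t\geq l+1$, then the $t$-th term of $S_{l}(A,\boldsymbol{\gamma})$ equals $t^{n-1}c_{l}^{m}$ for all $t\geq l$, so $S_{l}(A,\boldsymbol{\gamma})=c_{l}^{m}\sum_{t\geq l}t^{n-1}=+\infty$ (using $n\geq1$ and $c_{l}>0$) and there is nothing to prove. Otherwise $a_{t_{0}}<c_{l}$ for some $t_{0}\geq l+1$, and monotonicity of $(a_{t})$ forces $a_{t}<c_{l}$, hence $\min(c_{l},a_{t})=a_{t}$, for all $t\geq t_{0}$; consequently $S_{l}(A,\boldsymbol{\gamma})$ and $S_{l+1}(A,\boldsymbol{\gamma})$ share the tail $\sum_{t\geq t_{0}}t^{n-1}a_{t}^{m}$ and differ by a finite sum only, so convergence of $S_{l}(A,\boldsymbol{\gamma})$ forces convergence of $S_{l+1}(A,\boldsymbol{\gamma})$.

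The whole weight of the hypothesis sits in this last dichotomy: positivity is exactly what guarantees $c_{l}>0$, so that the alternative in which $a_{t}$ never drops below $c_{l}$ forces $S_{l}(A,\boldsymbol{\gamma})$ to diverge outright rather than leaving the comparison inconclusive. I expect no genuine obstacle; the only mildly fiddly point is keeping track of exactly which finitely many terms constitute the ``head'' on which the two series disagree.
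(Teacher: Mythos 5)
Your proof is correct and rests on the same mechanism as the paper's: going to a smaller starting index is a soft comparison, while going to a larger one requires the positivity hypothesis to produce a threshold $c_{l}>0$ below which the running minimum must eventually fall (otherwise the series diverges outright), after which the two series agree on a tail. Your packaging as a one-step equivalence $S_{l}<\infty\Leftrightarrow S_{l+1}<\infty$ followed by induction is a tidy reorganization, but the argument is substantively the one in the paper, which directly compares an arbitrary $l$ to a fixed $l_{0}$ with $S_{l_{0}}<\infty$.
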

\begin{proof}[\rm \textbf{Proof}]
Suppose that there exists $l_{0}\in\mathbb{N}$ such that $S_{l_{0}}(A,\boldsymbol{\gamma})<+\infty$, we show that $S_{l}(A,\boldsymbol{\gamma})<+\infty$ for all $l\in\mathbb{N}$. For any fixed $l\in\mathbb{N}$, we consider two cases. \\ Case 1: $l\leq l_{0}$. Then
\begin{equation*}
\begin{aligned}
S_{l}(A,\boldsymbol{\gamma})&=\sum_{t=l}^{\infty}t^{n-1}\min_{l\leq\|\boldsymbol{q}\|\leq t}\langle A\boldsymbol{q}-\boldsymbol{\gamma}\rangle^{m}\\
&=\sum_{t=l}^{l_{0}}t^{n-1}\min_{l\leq\|\boldsymbol{q}\|\leq t}\langle A\boldsymbol{q}-\boldsymbol{\gamma}\rangle^{m}+\sum_{t=l_{0}+1}^{\infty}t^{n-1}\min_{l\leq\|\boldsymbol{q}\|\leq t}\langle A\boldsymbol{q}-\boldsymbol{\gamma}\rangle^{m} \\
&\leq\sum_{t=l}^{l_{0}}t^{n-1}\min_{l\leq\|\boldsymbol{q}\|\leq t}\langle A\boldsymbol{q}-\boldsymbol{\gamma}\rangle^{m} +\sum_{t=l_{0}}^{\infty}t^{n-1}\min_{l_{0}\leq\|\boldsymbol{q}\|\leq t}\langle A\boldsymbol{q}-\boldsymbol{\gamma}\rangle^{m} \\
&=\sum_{t=l}^{l_{0}}t^{n-1}\min_{l\leq\|\boldsymbol{q}\|\leq t}\langle A\boldsymbol{q}-\boldsymbol{\gamma}\rangle^{m}+S_{l_{0}}(A,\boldsymbol{\gamma})<+\infty.
\end{aligned}
\end{equation*}
Case 2: $l>l_{0}$. Since $\langle A\boldsymbol{q}-\boldsymbol{\gamma}\rangle>0$ for any $\boldsymbol{q}\in\mathbb{Z}^{n}\setminus\{\boldsymbol{0}\}$, we have $$\min\limits_{l_{0}\leq\|\boldsymbol{q}\|\leq l-1}\langle A\boldsymbol{q}-\boldsymbol{\gamma}\rangle^{m}>0.$$ The convergence of the series
$S_{l_{0}}(A,\boldsymbol{\gamma})=\sum\limits_{t=l_{0}}^{\infty}t^{n-1}\min\limits_{l_{0}\leq\|\boldsymbol{q}\|\leq t}\langle A\boldsymbol{q}-\boldsymbol{\gamma}\rangle^{m}$ implies that $$\lim\limits_{t\to\infty}\min\limits_{l_{0}\leq\|\boldsymbol{q}\|\leq t}\langle A\boldsymbol{q}-\boldsymbol{\gamma}\rangle^{m}=0.$$ Thus, there exists $L\geq l+1$, such that for all $t\geq L$, we have
\begin{equation*}
\min\limits_{l_{0}\leq\|\boldsymbol{q}\|\leq t}\langle A\boldsymbol{q}-\boldsymbol{\gamma}\rangle^{m}<\min\limits_{l_{0}\leq\|\boldsymbol{q}\|\leq l-1}\langle A\boldsymbol{q}-\boldsymbol{\gamma}\rangle^{m}.
\end{equation*}
It follows that
\begin{equation*}
\min\limits_{l_{0}\leq\|\boldsymbol{q}\|\leq t}\langle A\boldsymbol{q}-\boldsymbol{\gamma}\rangle^{m}=\min\limits_{l\leq\|\boldsymbol{q}\|\leq t}\langle A\boldsymbol{q}-\boldsymbol{\gamma}\rangle^{m},\ \forall\ t\geq L.
\end{equation*}
Therefore
\begin{equation*}
\begin{aligned}
S_{l}(A,\boldsymbol{\gamma})&=\sum_{t=l}^{\infty}t^{n-1}\min_{l\leq\|\boldsymbol{q}\|\leq t}\langle A\boldsymbol{q}-\boldsymbol{\gamma}\rangle^{m}\\
&=\sum_{t=l}^{L-1}\min_{l\leq\|\boldsymbol{q}\|\leq t}\langle A\boldsymbol{q}-\boldsymbol{\gamma}\rangle^{m}+\sum_{t=L}^{\infty}t^{n-1}\min_{l\leq\|\boldsymbol{q}\|\leq t}\langle A\boldsymbol{q}-\boldsymbol{\gamma}\rangle^{m}\\
&=\sum_{t=l}^{L-1}\min_{l\leq\|\boldsymbol{q}\|\leq t}\langle A\boldsymbol{q}-\boldsymbol{\gamma}\rangle^{m}+\sum_{t=L}^{\infty}t^{n-1}\min_{l_{0}\leq\|\boldsymbol{q}\|\leq t}\langle A\boldsymbol{q}-\boldsymbol{\gamma}\rangle^{m}\\ &\leq\sum_{t=l}^{L-1}\min_{l\leq\|\boldsymbol{q}\|\leq t}\langle A\boldsymbol{q}-\boldsymbol{\gamma}\rangle^{m}+S_{l_{0}}(A,\boldsymbol{\gamma})<+\infty.
\end{aligned}
\end{equation*}
\textrm{}
\end{proof}
By Lemma $\ref{36}$ and Lemma $\ref{43}$, we immediately obtain the following corollary.
\begin{corollary}\label{41}
If $\langle A\boldsymbol{q}-\boldsymbol{\gamma}\rangle>0$ for any $\boldsymbol{q}\in\mathbb{Z}^{n}\setminus\{\boldsymbol{0}\}$, then
\begin{equation*}
(A,\boldsymbol{\gamma})\in \Omega(m,n) \quad {\rm if\ and\ only\ if} \quad S_{1}(A,\boldsymbol{\gamma})<+\infty.
\end{equation*}
\end{corollary}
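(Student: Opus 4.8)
The statement to prove is Corollary \ref{41}: under the assumption that $\langle A\boldsymbol{q}-\boldsymbol{\gamma}\rangle>0$ for all $\boldsymbol{q}\in\mathbb{Z}^n\setminus\{\boldsymbol{0}\}$, we have $(A,\boldsymbol{\gamma})\in\Omega(m,n)$ if and only if $S_1(A,\boldsymbol{\gamma})<+\infty$.

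The plan is to simply combine the two preceding results. First I would invoke Lemma \ref{36}: it says $(A,\boldsymbol{\gamma})\in\Omega(m,n)$ holds precisely when $S_l(A,\boldsymbol{\gamma})<+\infty$ for \emph{every} $l\in\mathbb{N}$. Then I would appeal to Lemma \ref{43}, whose hypothesis is exactly the positivity assumption we are given: under that hypothesis the family $\{S_l(A,\boldsymbol{\gamma})\}_{l\in\mathbb{N}}$ is "all or nothing" --- either all the series converge or all of them diverge. Consequently, the condition "$S_l(A,\boldsymbol{\gamma})<+\infty$ for all $l$" is equivalent to the single condition "$S_1(A,\boldsymbol{\gamma})<+\infty$" (the implication from $l=1$ convergence to convergence for all $l$ is precisely Lemma \ref{43} applied with $l_0=1$; the reverse is trivial).

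Stitching these together: $(A,\boldsymbol{\gamma})\in\Omega(m,n)$ $\Leftrightarrow$ $S_l(A,\boldsymbol{\gamma})<+\infty$ for all $l\in\mathbb{N}$ (by Lemma \ref{36}) $\Leftrightarrow$ $S_1(A,\boldsymbol{\gamma})<+\infty$ (by Lemma \ref{43}), which is the claim. There is essentially no obstacle here --- the corollary is a formal consequence of the two lemmas, and the only thing to be careful about is noting that the positivity hypothesis of Corollary \ref{41} is exactly what licenses the use of Lemma \ref{43}. A short two-line argument with a parenthetical remark that $S_1<+\infty$ is a priori stronger than $S_l<+\infty$ for general $l$ but becomes equivalent to it under the hypothesis will suffice.
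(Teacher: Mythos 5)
Your proposal is correct and matches the paper exactly: the paper states that the corollary follows immediately from combining Lemma \ref{36} (membership in $\Omega(m,n)$ is equivalent to $S_l(A,\boldsymbol{\gamma})<+\infty$ for all $l$) with Lemma \ref{43} (under the positivity hypothesis, the $S_l$ all converge or all diverge). One small slip in your parenthetical: the unconditional implication runs the other way --- for any $l$ one has $S_l(A,\boldsymbol{\gamma})<+\infty\Rightarrow S_1(A,\boldsymbol{\gamma})<+\infty$ a priori (the minimum over $1\le\|\boldsymbol q\|\le t$ is taken over a larger set, so the terms of $S_1$ are dominated by those of $S_l$ plus finitely many initial terms), whereas $S_1<+\infty\Rightarrow S_l<+\infty$ is the direction that genuinely requires the positivity hypothesis via Lemma \ref{43}; this does not affect the validity of your argument, which correctly invokes the full equivalence from Lemma \ref{43}.
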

The following lemma is a simple consequence of bad approximability.
\begin{lemma}\label{44}
Suppose $A\in\boldsymbol{\rm Bad}^{\boldsymbol{0}}(m,n)$. Then there exists a positive constant $c:=c(A)$ such that for any $\epsilon>0$,
\begin{equation*}
\min\{\|\boldsymbol{q}\|: \boldsymbol{q}\in\mathbb{Z}^{n}\setminus\{\boldsymbol{0}\}, \langle A\boldsymbol{q}\rangle<\epsilon\}\geq \left(\frac{c}{\epsilon^{m}}\right)^{\frac{1}{n}}.
\end{equation*}
\end{lemma}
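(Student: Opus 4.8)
The plan is to upgrade the $\liminf$ condition defining $\boldsymbol{\rm Bad}^{\boldsymbol{0}}(m,n)$ — which a priori controls only $\boldsymbol{q}$ with $\|\boldsymbol{q}\|$ large — into a uniform lower bound $\|\boldsymbol{q}\|^{n}\langle A\boldsymbol{q}\rangle^{m}\geq c$ valid for every $\boldsymbol{q}\in\mathbb{Z}^{n}\setminus\{\boldsymbol{0}\}$, and then to deduce the stated inequality by a one-line rearrangement.

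First I would observe that $\langle A\boldsymbol{q}\rangle>0$ for all $\boldsymbol{q}\in\mathbb{Z}^{n}\setminus\{\boldsymbol{0}\}$: if $\langle A\boldsymbol{q}_{0}\rangle=0$ for some $\boldsymbol{q}_{0}\neq\boldsymbol{0}$, then $A\boldsymbol{q}_{0}\in\mathbb{Z}^{m}$, hence $A(k\boldsymbol{q}_{0})=k(A\boldsymbol{q}_{0})\in\mathbb{Z}^{m}$ and $\langle A(k\boldsymbol{q}_{0})\rangle=0$ for every $k\in\mathbb{N}$, so $\liminf_{\|\boldsymbol{q}\|\to\infty}\|\boldsymbol{q}\|^{n}\langle A\boldsymbol{q}\rangle^{m}=0$, contradicting $A\in\boldsymbol{\rm Bad}^{\boldsymbol{0}}(m,n)$. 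Next, set $\delta:=\liminf_{\boldsymbol{q}\in\mathbb{Z}^{n},\|\boldsymbol{q}\|\to\infty}\|\boldsymbol{q}\|^{n}\langle A\boldsymbol{q}\rangle^{m}>0$. By the definition of $\liminf$ there is an $N\in\mathbb{N}$ with $\|\boldsymbol{q}\|^{n}\langle A\boldsymbol{q}\rangle^{m}\geq\delta/2$ whenever $\|\boldsymbol{q}\|\geq N$. For the finitely many $\boldsymbol{q}\in\mathbb{Z}^{n}$ with $1\leq\|\boldsymbol{q}\|\leq N-1$ (an empty range if $N=1$), the previous observation gives $\rho:=\min_{1\leq\|\boldsymbol{q}\|\leq N-1}\langle A\boldsymbol{q}\rangle>0$, whence $\|\boldsymbol{q}\|^{n}\langle A\boldsymbol{q}\rangle^{m}\geq\rho^{m}$ for such $\boldsymbol{q}$ (using $\|\boldsymbol{q}\|\geq1$). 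Putting $c:=\min\{\delta/2,\rho^{m}\}>0$ gives $\|\boldsymbol{q}\|^{n}\langle A\boldsymbol{q}\rangle^{m}\geq c$ for all $\boldsymbol{q}\in\mathbb{Z}^{n}\setminus\{\boldsymbol{0}\}$.

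Finally, fix $\epsilon>0$. If no $\boldsymbol{q}\in\mathbb{Z}^{n}\setminus\{\boldsymbol{0}\}$ satisfies $\langle A\boldsymbol{q}\rangle<\epsilon$ the left-hand minimum is $+\infty$ and there is nothing to prove; otherwise, for any such $\boldsymbol{q}$ we have $0<\langle A\boldsymbol{q}\rangle<\epsilon$, so $c\leq\|\boldsymbol{q}\|^{n}\langle A\boldsymbol{q}\rangle^{m}<\|\boldsymbol{q}\|^{n}\epsilon^{m}$, i.e. $\|\boldsymbol{q}\|\geq(c/\epsilon^{m})^{1/n}$, and taking the minimum over all such $\boldsymbol{q}$ finishes the proof. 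The only point that needs care — and the crux of the argument — is precisely the passage from the tail hypothesis ($\|\boldsymbol{q}\|\to\infty$) to a bound uniform over all nonzero $\boldsymbol{q}$; this is made possible by the observation that $\langle A\boldsymbol{q}\rangle$ never vanishes on $\mathbb{Z}^{n}\setminus\{\boldsymbol{0}\}$, and the rest is routine.
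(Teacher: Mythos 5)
Your proof is correct and follows the same approach as the paper's, which simply asserts the uniform bound $\|\boldsymbol{q}\|^{n}\langle A\boldsymbol{q}\rangle^{m}\geq c$ for all nonzero $\boldsymbol{q}$ as an immediate consequence of $A\in\boldsymbol{\rm Bad}^{\boldsymbol{0}}(m,n)$ and then rearranges. The only difference is that you spell out the routine upgrade from the $\liminf$ hypothesis to the uniform lower bound (via non-vanishing of $\langle A\boldsymbol{q}\rangle$ and a finite minimum), which the paper leaves implicit.
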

\begin{proof}[\rm \textbf{Proof}]
Since $A\in\boldsymbol{\rm Bad}^{\boldsymbol{0}}(m,n)$, there exists $c:=c(A)>0$, such that
\begin{equation*}
\|\boldsymbol{q}\|^{n}\langle A\boldsymbol{q}\rangle^{m}\geq c\  {\rm for\ any}\ \boldsymbol{q}\in\mathbb{Z}^{n}\setminus\{\boldsymbol{0}\}.
\end{equation*}
Therefore for all $\boldsymbol{q}\in\mathbb{Z}^{n}\setminus\{\boldsymbol{0}\}$ with $\langle A\boldsymbol{q}\rangle<\epsilon$, we have
\begin{equation*}
\|\boldsymbol{q}\|\geq \left(\frac{c}{\epsilon^{m}}\right)^{\frac{1}{n}},
\end{equation*}
which implies that
\begin{equation*}
\min\{\|\boldsymbol{q}\|: \boldsymbol{q}\in\mathbb{Z}^{n}\setminus\{\boldsymbol{0}\}, \langle A\boldsymbol{q}\rangle<\epsilon\}\geq \left(\frac{c}{\epsilon^{m}}\right)^{\frac{1}{n}}.
\end{equation*}
\textrm{}
\end{proof}
The proof of Lemmas $\ref{36}$, $\ref{43}$ and $\ref{44}$ are similar to the proof of \cite[Lemmas 3.1, 3.2 and 3.3]{FAR1}, which consider the case $n=1$. For completeness, we still give the above proof. The following elementary lemma, which is a generalization of the Olivier's theorem \cite{MLO}, is important in the proof of Theorems $\ref{27}$, $\ref{114}$ and $\ref{12}$.
\begin{lemma}{\rm( \cite{DLVP})}\label{54}
Let $n$ be a positive integer and $\{a_{t}\}_{t=1}^{\infty}$ be a decreasing sequence of non-negative numbers with
\begin{equation*}
\sum_{t=1}^{+\infty}t^{n-1}a_{t}<+\infty.
\end{equation*}
Then
\begin{equation*}
\lim_{t\to\infty}t^{n}a_{t}=0.
\end{equation*}
\end{lemma}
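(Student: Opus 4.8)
The plan is to prove Lemma \ref{54} by a dyadic block comparison that generalizes the classical argument for Olivier's theorem (the case $n=1$): I will use the monotonicity of $\{a_t\}$ to bound $t^n a_t$ from above by a fixed multiple of a tail block of the convergent series $\sum_t t^{n-1}a_t$, and then let that block recede to infinity.

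First, fix an integer $t\geq 2$ and consider the block of indices $B_t:=\{\lfloor t/2\rfloor+1,\ldots,t\}$. It contains $\lceil t/2\rceil\geq t/2$ integers, and every $s\in B_t$ satisfies $s>t/2$, hence $s^{n-1}\geq (t/2)^{n-1}$, while monotonicity gives $a_s\geq a_t$. Therefore
\[
\sum_{s\in B_t}s^{n-1}a_s\;\geq\;\frac{t}{2}\cdot\Bigl(\frac{t}{2}\Bigr)^{n-1}a_t\;=\;\frac{t^n}{2^n}\,a_t,
\]
so that $t^n a_t\leq 2^n\sum_{s=\lfloor t/2\rfloor+1}^{t}s^{n-1}a_s$ for every $t\geq 2$.

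Next, since $\sum_{t=1}^{\infty}t^{n-1}a_t<+\infty$ and the terms are non-negative, the Cauchy criterion yields $\sum_{s>M}s^{n-1}a_s\to 0$ as $M\to\infty$; in particular the partial block sums $\sum_{s=\lfloor t/2\rfloor+1}^{t}s^{n-1}a_s$ are dominated by $\sum_{s>\lfloor t/2\rfloor}s^{n-1}a_s$ and hence tend to $0$ as $t\to\infty$. Combining this with the displayed inequality gives $t^n a_t\to 0$, which is the assertion.

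I do not expect a genuine obstacle: the only point needing mild care is the bookkeeping with the floor function (verifying that $B_t$ has at least $t/2$ indices, each of size at least $(t/2)^{n-1}$), which is routine; one could instead split into even $t=2N$ and odd $t=2N+1$ and use the blocks $\{N+1,\ldots,2N\}$ and $\{N+1,\ldots,2N+1\}$, but the unified block $B_t$ avoids the case distinction. Note that specializing to $n=1$ recovers the statement $t a_t\to 0$ that is implicitly used in Lemma \ref{43}.
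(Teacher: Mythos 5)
Your proof is correct. Note, however, that the paper does not give its own proof of Lemma~\ref{54}: it is stated as a known result with a citation to De La Vall\'ee Poussin's \emph{Cours d'Analyse Infinit\'esimale}, so there is no in-paper argument to compare against. Your dyadic block argument is the standard Abel--Pringsheim-style proof of this generalization of Olivier's theorem: the bound $t^n a_t \leq 2^n \sum_{s=\lfloor t/2\rfloor+1}^{t} s^{n-1}a_s$ is correctly derived from monotonicity (each of the $\lceil t/2\rceil \geq t/2$ terms has $s>t/2$ and $a_s\geq a_t$), and the Cauchy criterion for the convergent series makes the right-hand side vanish as $t\to\infty$. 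This is a clean, self-contained verification of the cited fact and is exactly the kind of argument the reference would supply.
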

In order to prove Theorem $\ref{27}$ (\rmnum{2}) and Theorem $\ref{114}$, we need the theory of $\psi$-Dirichlet. The below definition of $\psi$-Dirichlet can be found in \cite[Section 1.2]{DN}.
\begin{definition}
For a decreasing function $\psi:\mathbb{N}\to\mathbb{R}_{\geq0}$, we say that a pair $(A,\boldsymbol{\gamma})\in [0,1)^{m\times n}\times[0,1)^{m}$ is $\psi$-Dirichlet if there exists $\boldsymbol{q}\in\mathbb{Z}^{n}$ such that
\begin{equation*}
\langle A\boldsymbol{q}-\boldsymbol{\gamma}\rangle^{m}<\psi(T)\ {\rm and}\ 1\leq\|\boldsymbol{q}\|^{n}\leq T
\end{equation*}
whenever $T$ is large enough.
\end{definition}
Denote the set of all $\psi$-Dirichlet pairs by $D_{m,n}(\psi)$. For fix $\boldsymbol{\gamma}\in[0,1)^{m}$, let
\begin{equation*}
D_{m,n}^{\boldsymbol{\gamma}}(\psi):=\{A\in[0,1)^{m\times n}: (A,\boldsymbol{\gamma})\in D_{m,n}(\psi)\}.
\end{equation*}
Furthermore, for $\kappa\in[0,+\infty)$, let function $\psi_{\kappa}(T)=T^{-\kappa},\ \forall\ T\in\mathbb{N}$. Denote
\begin{equation*}
\textbf{Sing}_{m,n}^{\boldsymbol{\gamma}}(\kappa):=\bigcap_{\epsilon>0}D_{m,n}^{\boldsymbol{\gamma}}(\epsilon\cdot\psi_{\kappa}),
\end{equation*}
which is the set of all inhomogeneously singular matrices with respect to exponent $\kappa$. When $\boldsymbol{\gamma}=\boldsymbol{0}$ and $\kappa=1$, we call $\textbf{Sing}_{m,n}^{\boldsymbol{0}}(1)$ the set of all singular matrices. The notation of singularity was introduced by Khintchine, first in 1937 in the setting of simultaneous approximation \cite{AK1}, and later in 1948 in the more general setting of matrix approximation \cite{AK2}.
The name singular derives from the fact that $\textbf{Sing}_{m,n}^{\boldsymbol{0}}(1)$ is a Lebesgue null set for all $m,n$, see \cite{AK1}. The Hausdorff dimension of $\textbf{Sing}_{m,n}^{\boldsymbol{0}}(1)$ is a classical and difficult question in Diophantine approximation. In 2024, Das, Fishman, Simmons and Urba\'{n}ski \cite{DFSU} gave the answer.
\begin{lemma}\label{301}{\rm(\cite[Theorem 3.1]{DFSU})}
For all $m,n\in\mathbb{N}$ with $mn>1$, we have $$\dim_{\rm H}\left(\rm{\mathbf{Sing}_{m,n}^{\boldsymbol{0}}(1)}\right)=mn\left(1-\frac{1}{m+n}\right).$$
\end{lemma}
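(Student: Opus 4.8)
The final statement to be proven is Lemma \ref{301}, which is cited directly as Theorem 3.1 from Das--Fishman--Simmons--Urba\'{n}ski \cite{DFSU}. Since this is quoted verbatim as an external result, the honest "proof" here is a pointer rather than a from-scratch argument; nevertheless, let me sketch the structure of the genuine proof, as it appears in \cite{DFSU}, and indicate what I would emphasize.

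\textbf{Overall approach.} The plan is to translate the statement about singular matrices into a statement about the dynamics of the one-parameter diagonal flow $g_t = \mathrm{diag}(e^{t/m},\dots,e^{t/m},e^{-t/n},\dots,e^{-t/n})$ acting on the space of unimodular lattices $X = \mathrm{SL}_{m+n}(\mathbb{R})/\mathrm{SL}_{m+n}(\mathbb{Z})$, via the Dani correspondence: a matrix $A$ is singular if and only if the forward orbit $\{g_t u_A \mathbb{Z}^{m+n} : t \geq 0\}$ is divergent in $X$ (leaves every compact set), where $u_A$ is the unipotent embedding of $A$. Thus $\dim_{\rm H}\mathbf{Sing}_{m,n}^{\boldsymbol 0}(1)$ is controlled by the Hausdorff dimension of the set of points whose $g_t$-orbit diverges, intersected with the unstable horospherical leaf through the identity. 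The upper bound and the lower bound are proved by quite different techniques.

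\textbf{The upper bound} $\dim_{\rm H}\mathbf{Sing}_{m,n}^{\boldsymbol 0}(1) \leq mn(1 - \tfrac{1}{m+n})$ is the harder direction and, I expect, the main obstacle. The strategy is a covering argument: one builds, for each large time parameter $N$, an efficient cover of the set of matrices whose orbit has already "escaped to depth $N$" by time roughly $N$, using the quantitative non-divergence machinery (Kleinbock--Margulis) together with a careful bookkeeping of how a divergent orbit must spend an increasing fraction of its time near the cusp. The key quantitative input is that escaping to the cusp forces the lattice $g_t u_A \mathbb{Z}^{m+n}$ to contain a short vector, hence a rational subspace of $\mathbb{R}^{m+n}$ whose $g_t$-covolume is small; summing the contributions of all possible rational subspaces and all escape patterns, and optimizing the exponents, yields the stated bound. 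The combinatorial heart — showing the sum is dominated by the "last coordinate hyperplane" configuration and produces exactly the exponent $mn - \tfrac{mn}{m+n}$ — is where the proof is most delicate; Das--Fishman--Simmons--Urba\'{n}ski handle this through their "template" and "variational principle" formalism.

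\textbf{The lower bound} $\dim_{\rm H}\mathbf{Sing}_{m,n}^{\boldsymbol 0}(1) \geq mn(1 - \tfrac{1}{m+n})$ proceeds by an explicit Cantor-set construction inside the set of singular matrices. One fixes the direction of escape to be along the last coordinate hyperplane and, at a sequence of rapidly increasing scales, successively refines a family of nested cubes in matrix space, at each stage keeping those sub-cubes for which the corresponding orbit segment can be pushed deep into the cusp while retaining enough freedom in the remaining $mn - mn/(m+n)$ "directions" to continue. A mass distribution / Frostman measure supported on the limiting Cantor set, combined with the standard mass distribution principle, gives the lower bound on the dimension. The bound is sharp precisely because the escape is forced to use one of the $m$ "expanding" blocks, costing a factor $1/(m+n)$ of the full dimension $mn$.

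In the present paper this lemma is invoked only as a black box to compute $\dim_{\rm H}(\Omega^{\boldsymbol 0}(m,n))$ in Theorem \ref{114}(i), via the inclusion relating $\Omega^{\boldsymbol 0}(m,n)$ and the singular set; accordingly, I would not reproduce the argument of \cite{DFSU} but simply cite it, as the authors do.
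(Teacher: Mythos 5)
Your proposal is correct and takes the same approach as the paper: the paper simply cites Theorem 3.1 of \cite{DFSU} without proof, exactly as you do, and your supplementary sketch of the Dani-correspondence/variational-principle argument in \cite{DFSU} is an accurate description of where the result comes from.
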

\begin{remark}
When $m=n=1$, by the knowledge of continued fraction, we know that $$\rm{\mathbf{Sing}_{1,1}^{\boldsymbol{0}}(1)}=[0,1)\cap\mathbb{Q}.$$
\end{remark}
Das, Fishman, Simmons and Urba\'{n}ski \cite{DFSU} also studied the Hausdorff dimension of the set of all very singular matrices. More exactly, denote
\begin{equation*}
\textbf{VSing}_{m,n}^{\boldsymbol{0}}:=\bigcup_{\kappa>1}D_{m,n}^{\boldsymbol{0}}(\psi_{\kappa}),
\end{equation*}
they prove that the Hausdorff dimension of $\textbf{VSing}_{m,n}^{\boldsymbol{0}}$ is equal to the Hausdorff dimension of $\textbf{Sing}_{m,n}^{\boldsymbol{0}}(1)$.
\begin{lemma}\label{303}{\rm(\cite[Theorem 3.4]{DFSU})}
For all $m,n\in\mathbb{N}$ with $mn>1$, we have $$\dim_{\rm H}\left(\rm{\mathbf{VSing}_{m,n}^{\boldsymbol{0}}}\right)=mn\left(1-\frac{1}{m+n}\right).$$
\end{lemma}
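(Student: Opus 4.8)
The plan is to establish the two inequalities separately. The upper bound $\dim_{\rm H}(\textbf{VSing}_{m,n}^{\boldsymbol{0}})\leq mn\left(1-\frac{1}{m+n}\right)$ follows from Lemma \ref{301} once one notes the inclusion $\textbf{VSing}_{m,n}^{\boldsymbol{0}}\subseteq\textbf{Sing}_{m,n}^{\boldsymbol{0}}(1)$: fixing $\kappa>1$ and $A\in D_{m,n}^{\boldsymbol{0}}(\psi_{\kappa})$, for every $\epsilon>0$ we have $\psi_{\kappa}(T)=T^{-\kappa}\leq\epsilon T^{-1}=\epsilon\psi_{1}(T)$ for all large $T$, so any $\boldsymbol{q}$ witnessing the $\psi_{\kappa}$-Dirichlet inequality at such a $T$ also witnesses the $\epsilon\psi_{1}$-Dirichlet inequality; hence $A\in D_{m,n}^{\boldsymbol{0}}(\epsilon\psi_{1})$ for every $\epsilon>0$, i.e. $A\in\textbf{Sing}_{m,n}^{\boldsymbol{0}}(1)$, and the inclusion follows by taking the union over $\kappa>1$. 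Monotonicity of Hausdorff dimension together with Lemma \ref{301} then gives the upper bound.

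For the lower bound, first note that $D_{m,n}^{\boldsymbol{0}}(\psi_{\kappa})$ increases as $\kappa$ decreases, so $\textbf{VSing}_{m,n}^{\boldsymbol{0}}=\bigcup_{j\geq1}D_{m,n}^{\boldsymbol{0}}(\psi_{1+1/j})$ is a countable increasing union and $\dim_{\rm H}(\textbf{VSing}_{m,n}^{\boldsymbol{0}})=\lim_{\kappa\to1^{+}}\dim_{\rm H}(D_{m,n}^{\boldsymbol{0}}(\psi_{\kappa}))$. Since each $D_{m,n}^{\boldsymbol{0}}(\psi_{\kappa})$ with $\kappa>1$ also lies in $\textbf{Sing}_{m,n}^{\boldsymbol{0}}(1)$ and hence has dimension at most $mn\left(1-\frac{1}{m+n}\right)$, it suffices to show that for every $\eta>0$ there is $\kappa=\kappa(\eta)>1$ with $\dim_{\rm H}(D_{m,n}^{\boldsymbol{0}}(\psi_{\kappa}))\geq mn\left(1-\frac{1}{m+n}\right)-\eta$.

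To construct such a set I would revisit the Cantor-type construction behind the lower bound in Lemma \ref{301} — a nested family of boxes in $[0,1)^{m\times n}$ built along a fast-growing sequence of scales $T_{j}\to\infty$, on each of which the relevant best-approximation data are prescribed so that every $A$ in the limit set satisfies $\langle A\boldsymbol{q}_{j}\rangle^{m}<T_{j}^{-1}$ with $1\leq\|\boldsymbol{q}_{j}\|^{n}\leq T_{j}$, together with a filling-in of the intermediate values of $T$. The point is that this construction has a free parameter measuring how super-critical the prescribed singularity is: by taking the scales geometrically, say $T_{j+1}=T_{j}^{1+\rho}$, and sharpening the approximation at stage $j$ to $\langle A\boldsymbol{q}_{j}\rangle^{m}<T_{j}^{-(1+\rho)}$, one checks that for every large $T$, choosing $j$ with $T_{j-1}<T\leq T_{j}$, one has $\langle A\boldsymbol{q}_{j}\rangle^{m}<T_{j}^{-(1+\rho)}\leq T^{-(1+\rho')}$ for some $\rho'=\rho'(\rho)>0$, whence $A\in D_{m,n}^{\boldsymbol{0}}(\psi_{1+\rho'})$; at the same time the Hausdorff dimension of the limit set degrades continuously in $\rho$ and tends to $mn\left(1-\frac{1}{m+n}\right)$ as $\rho\to0$, by the same mass-distribution and covering estimates that prove Lemma \ref{301}. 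In the parametric-geometry-of-numbers language of \cite{DFSU} this is the statement that a near-optimal ``template'' for $\textbf{Sing}_{m,n}^{\boldsymbol{0}}(1)$ can be perturbed to one that is eventually bounded below by a positive slope — hence realising $\textbf{VSing}_{m,n}^{\boldsymbol{0}}$ — while losing only $\eta$ in the dimension functional.

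The main obstacle is exactly this continuity of the dimension under passing from the critical rate $\psi_{1}$ to a strictly super-critical rate $\psi_{1+\rho'}$: one must carry the extra parameter $\rho$ through the entire recursion of the construction in \cite{DFSU} (the count of admissible sub-boxes at each stage and the lower estimate for the local dimension of the natural measure), verify that the sharpened approximation radii do not make the Cantor set collapse, and confirm that the intermediate scales are genuinely filled so that the $\psi_{1+\rho'}$-Dirichlet condition holds for all large $T$ rather than merely along $T_{j}$. The remaining ingredients — the upper bound, the reduction to a single exponent $\kappa$, and the soft measure-theoretic bookkeeping — should be routine.
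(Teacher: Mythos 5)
The paper does not prove this statement at all: it is quoted verbatim as \cite[Theorem 3.4]{DFSU}, so there is no ``paper's own proof'' to compare against. Your attempt is therefore more ambitious than what the paper does, and it is worth assessing on its own merits.

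Your upper bound is complete and correct. The inclusion $\mathbf{VSing}_{m,n}^{\boldsymbol{0}}\subseteq\mathbf{Sing}_{m,n}^{\boldsymbol{0}}(1)$ follows exactly as you say from $T^{-\kappa}\leq\epsilon T^{-1}$ for $T$ large when $\kappa>1$, and monotonicity of Hausdorff dimension combined with Lemma \ref{301} finishes it. The reduction of the lower bound to showing $\dim_{\rm H}(D_{m,n}^{\boldsymbol{0}}(\psi_{\kappa}))\to mn\bigl(1-\tfrac{1}{m+n}\bigr)$ as $\kappa\to1^{+}$, via a countable increasing union, is also sound.

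The lower bound itself, however, has a genuine gap, and it is precisely the one you flag. You propose to perturb the near-extremal Cantor-type construction for $\mathbf{Sing}_{m,n}^{\boldsymbol{0}}(1)$ to a super-critical rate $\psi_{1+\rho'}$ and assert that ``the Hausdorff dimension of the limit set degrades continuously in $\rho$.'' That continuity is not a soft fact: in the parametric geometry of numbers it is equivalent to showing that the supremum of the dimension functional over templates whose relevant coordinate is eventually bounded below by a positive slope coincides with the supremum over templates that merely tend to $-\infty$. A priori the first class is strictly smaller and the supremum could drop discontinuously. In \cite{DFSU} this is resolved not by an abstract perturbation argument but by explicitly exhibiting, for each $\delta>0$, a single self-similar template with slope bounded away from zero whose dimension exceeds $mn\bigl(1-\tfrac{1}{m+n}\bigr)-\delta$, and then invoking the variational principle (their Theorem 2.2/2.4) to transfer that template dimension to the Hausdorff dimension of the corresponding set of matrices. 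Your sketch gestures at this (``a near-optimal template... can be perturbed to one that is eventually bounded below by a positive slope'') but does not construct such a template, does not control the dimension functional under the perturbation, and does not verify that the intermediate scales fill so that the Dirichlet condition holds for \emph{all} large $T$ along a $T_{j+1}=T_{j}^{1+\rho}$ schedule (for instance the choice of $\rho'$ in terms of $\rho$ needs $T_{j}^{-(1+\rho)}\leq T^{-(1+\rho')}$ for $T\in(T_{j-1},T_{j}]$, which forces $\rho'\leq\rho/(1+\rho)$ and is fine, but the dimension estimate at that $\rho$ is the real work). As written, the lower bound is a plausible roadmap that relies on the very theorem it is trying to prove; you would need to reproduce the template computation of \cite{DFSU} to close it.
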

\begin{remark}
Note that
\begin{equation*}
\bigcup_{\kappa>1}D_{m,n}^{\boldsymbol{0}}(\psi_{\kappa})=\bigcup_{\kappa>1}\textbf{Sing}_{m,n}^{\boldsymbol{0}}(\kappa),
\end{equation*}
in view of Theorem $\ref{303}$, we have
\begin{equation*}
\dim_{\rm H}\left(\bigcup_{\kappa>1}\textbf{Sing}_{m,n}^{\boldsymbol{0}}(\kappa)\right)=mn\left(1-\frac{1}{m+n}\right).
\end{equation*}
\end{remark}
Furthermore, Schleischitz \cite{SCJ} gave a lower bound of the Hausdorff dimension of $\textbf{Sing}_{m,1}^{\boldsymbol{\gamma}}(\kappa)$.
\begin{lemma}\label{19}{\rm(\cite[Theorem 2.2]{SCJ})}
For any $\boldsymbol{\gamma}\in[0,1)^{m}$ and $\kappa\in[0,m)$, we have $$\dim_{\rm H}(\mathbf{Sing}_{m,1}^{\boldsymbol{\gamma}}(\kappa))\geq m\left(\frac{m-\kappa}{m+\kappa}\right)^{2}.$$
\end{lemma}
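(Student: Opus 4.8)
Since here $n=1$, write $A=\boldsymbol{\alpha}=(\alpha_{1},\dots,\alpha_{m})^{T}$. The plan is to prove the lower bound by an explicit Cantor construction, in which the free parameter is a choice of rational vector $(\boldsymbol{p}+\boldsymbol{\gamma})/q$ approximating $\boldsymbol{\alpha}$ along a rapidly growing sequence of denominators. First I unwind the definitions: $\boldsymbol{\alpha}\in\mathbf{Sing}_{m,1}^{\boldsymbol{\gamma}}(\kappa)=\bigcap_{\epsilon>0}D_{m,1}^{\boldsymbol{\gamma}}(\epsilon\psi_{\kappa})$ holds precisely when, for every $\delta>0$, for all large $T$ there is $q\in\mathbb{Z}$ with $1\le|q|\le T$ and $\langle q\boldsymbol{\alpha}-\boldsymbol{\gamma}\rangle<\delta\,T^{-\kappa/m}$. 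A clean sufficient condition is: there exist an increasing sequence of positive integers $(q_{j})_{j\ge1}$ and integer vectors $\boldsymbol{p}_{j}\in\mathbb{Z}^{m}$ such that
\[
\eta_{j}:=\|q_{j}\boldsymbol{\alpha}-\boldsymbol{\gamma}-\boldsymbol{p}_{j}\|\ \text{satisfies}\ \lim_{j\to\infty}\eta_{j}\,q_{j+1}^{\kappa/m}=0;
\]
indeed, given $\delta$, for large $T$ pick the index $j$ with $q_{j}\le T<q_{j+1}$ and take $q=q_{j}$, so that $\langle q_{j}\boldsymbol{\alpha}-\boldsymbol{\gamma}\rangle\le\eta_{j}<\delta\,q_{j+1}^{-\kappa/m}\le\delta\,T^{-\kappa/m}$. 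Thus it suffices to exhibit a subset of $[0,1)^{m}$ of the asserted dimension on which such $(q_{j}),(\boldsymbol{p}_{j})$ exist.

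Now the construction. Fix a growth exponent $\theta>\tfrac{m}{m-\kappa}$ (possible since $\kappa<m$), a large integer $q_{0}$, let $q_{j}$ be an integer close to $q_{0}^{\theta^{j}}$, and set $\eta_{j}:=q_{j+1}^{-\kappa/m}/j$ so that $\eta_{j}q_{j+1}^{\kappa/m}\to0$. Define nested families of cubes in $[0,1)^{m}$: a level-$j$ cube is a closed cube of side $2\eta_{j}/q_{j}$ centred at a point $(\boldsymbol{p}+\boldsymbol{\gamma})/q_{j}$, $\boldsymbol{p}\in\mathbb{Z}^{m}$, and we retain only those level-$j$ cubes contained in some level-$(j-1)$ cube. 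The centres of the level-$j$ cubes form a translate of the lattice $q_{j}^{-1}\mathbb{Z}^{m}$, hence are $1/q_{j}$-separated (far larger than the side $2\eta_{j}/q_{j}$), while a level-$(j-1)$ cube has side $2\eta_{j-1}/q_{j-1}$; therefore each level-$(j-1)$ cube contains
\[
N_{j}\asymp\Bigl(\frac{\eta_{j-1}q_{j}}{q_{j-1}}\Bigr)^{m}\asymp\frac{q_{j-1}^{\,(m-\kappa)\theta-m}}{j^{m}}
\]
children, a quantity which exceeds $2$ for $q_{0}$ large because $(m-\kappa)\theta-m>0$. The set $K:=\bigcap_{j}(\text{union of level-}j\text{ cubes})$ is nonempty and compact, and each $\boldsymbol{\alpha}\in K$ comes with centres supplying $(q_{j}),(\boldsymbol{p}_{j})$ with $\|q_{j}\boldsymbol{\alpha}-\boldsymbol{\gamma}-\boldsymbol{p}_{j}\|\le\eta_{j}$; hence $K\subseteq\mathbf{Sing}_{m,1}^{\boldsymbol{\gamma}}(\kappa)$.

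It remains to bound $\dim_{\rm H}K$ from below, which I would do with the mass distribution principle for the measure $\mu$ assigning mass $(N_{1}\cdots N_{j})^{-1}$ to each level-$j$ cube. The one delicate point is the correct choice of scale: a level-$j$ cube has side $2\eta_{j}/q_{j}$, which is minuscule next to the sibling spacing $1/q_{j}$, so the last relevant scale before level $j+1$ is $\sim\eta_{j}/q_{j}=q_{j+1}^{-\kappa/m}/(jq_{j})$, not $1/q_{j}$. For $r$ in either of the ranges $[q_{j+1}^{-1},\,2\eta_{j}/q_{j}]$ or $[2\eta_{j}/q_{j},\,q_{j}^{-1}]$, a ball $B(x,r)$ meets at most $O\bigl((rq_{j+1})^{m}\bigr)$ level-$(j+1)$ cubes inside a single level-$j$ cube (respectively $O(1)$ level-$j$ cubes), and dividing by the relevant total count shows $\mu(B(x,r))\lesssim r^{s}$ whenever
\[
s\le\frac{\log(N_{1}\cdots N_{j})}{\log q_{j}+\tfrac{\kappa}{m}\log q_{j+1}};
\]
larger scales impose no stronger restriction. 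Using $\log q_{j}\sim\theta^{j}\log q_{0}$ and $\log N_{j}\sim\bigl((m-\kappa)\theta-m\bigr)\theta^{j-1}\log q_{0}$, the resulting liminf equals
\[
f(\theta):=\frac{m\bigl((m-\kappa)\theta-m\bigr)}{(\theta-1)\,(m+\kappa\theta)} .
\]
An elementary one–variable optimisation (the critical point is $\theta=\tfrac{2m}{m-\kappa}$, which lies in the admissible range) gives $\max f=f\!\left(\tfrac{2m}{m-\kappa}\right)=m\bigl(\tfrac{m-\kappa}{m+\kappa}\bigr)^{2}$, and letting $\theta$ tend to this optimal value yields $\dim_{\rm H}\mathbf{Sing}_{m,1}^{\boldsymbol{\gamma}}(\kappa)\ge m\bigl(\tfrac{m-\kappa}{m+\kappa}\bigr)^{2}$. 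The main obstacle is precisely this last step — setting up the Cantor construction so that the branching, the cube sizes and the separations interlock, and running the mass distribution estimate at the correct (cube‑size) scale at each level; the inhomogeneous shift $\boldsymbol{\gamma}$ is harmless, since it only translates the lattices of centres and leaves every count unchanged.
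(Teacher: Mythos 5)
The paper does not prove this lemma; it is imported verbatim as Theorem 2.2 of the cited reference \cite{SCJ}, so there is no internal proof to compare against. Your blind proof is therefore a self-contained argument for a black-box ingredient, and it is, as far as I can check, correct in its essentials. The reduction to the sufficient condition ``$\eta_j q_{j+1}^{\kappa/m}\to0$'' is right, the Cantor construction with level-$j$ cubes of side $2\eta_j/q_j$ centred on $(\boldsymbol{p}+\boldsymbol{\gamma})/q_j$ is the natural Jarn\'{\i}k-type scheme, and the inhomogeneous shift $\boldsymbol{\gamma}$ indeed does nothing but translate the lattice of centres. The branching count $N_j\asymp q_{j-1}^{(m-\kappa)\theta-m}/j^{m}$, the identification of the binding scale $r\sim\ell_j=2\eta_j/q_j$ (rather than the spacing $1/q_j$) in the mass-distribution estimate, the resulting exponent
\[
f(\theta)=\frac{m\bigl((m-\kappa)\theta-m\bigr)}{(\theta-1)(m+\kappa\theta)},
\]
the stationary point $\theta^{*}=2m/(m-\kappa)$ (which indeed lies in the admissible range $\theta>m/(m-\kappa)$), and the value $f(\theta^{*})=m\bigl(\tfrac{m-\kappa}{m+\kappa}\bigr)^{2}$ all check out; I verified the derivative computation, which reduces to $f'(\theta)\propto\kappa\theta\bigl(2m-(m-\kappa)\theta\bigr)$.

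Two minor remarks. First, since $\theta^{*}$ is interior you may simply take $\theta=\theta^{*}$; there is no need to ``let $\theta$ tend to'' anything. Second, for a clean application of the mass distribution principle one should either prune each parent to exactly $\lfloor cN_j\rfloor$ children or absorb the bounded variability of $N_j$ into the implied constant; likewise the $\lesssim$ estimates must be uniform in $j$ (which your super-exponential choice of $q_j$ does deliver, but it is worth saying). Neither of these affects the correctness of the argument; they are the standard bookkeeping that such Cantor constructions require.
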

\begin{remark}
Note that $\mathbf{Sing}_{m,1}^{\boldsymbol{\gamma}}(n\kappa)\times[0,1)^{m\times(n-1)}\subset\mathbf{Sing}_{m,n}^{\boldsymbol{\gamma}}(\kappa)$ for every $\boldsymbol{\gamma}\in[0,1)^{m}$ and $\kappa\in[0,+\infty)$. By Lemma $\ref{19}$ and \cite[Corollary 7.12]{KJF}, we immediately obtain that $$\dim_{\rm H}(\mathbf{Sing}_{m,n}^{\boldsymbol{\gamma}}(\kappa))\geq m(n-1)+m\left(\frac{m-n\kappa}{m+n\kappa}\right)^{2},\ \forall\ \kappa\in\left[0,\frac{m}{n}\right).$$
\end{remark}
The Hausdorff measure of $[0,1)^{m\times n}\setminus D_{m,n}^{\boldsymbol{\gamma}}(\psi)$ was established by T. Kim and W. Kim \cite{KK}.
\begin{lemma}\label{45}{\rm(\cite[Theorem 1.4]{KK})}
Given a decreasing function $\psi$ with $\lim\limits_{T\to+\infty}\psi(T)=0$ and $0\leq s\leq mn$, the $s$-dimensional Hausdorff measure of $[0,1)^{m\times n}\setminus D_{m,n}^{\boldsymbol{\gamma}}(\psi)$ is given by
\begin{equation*}
\mathcal{H}^{s}([0,1)^{m\times n}\setminus D_{m,n}^{\boldsymbol{\gamma}}(\psi))=\begin{cases} 0,  &\text{if}\ \sum\limits_{T=1}^{+\infty}\frac{1}{\psi(T)T^{2}}\left(\frac{T^{\frac{1}{n}}}{\psi(T)^{\frac{1}{m}}}\right)^{mn-s}<+\infty, \\
\mathcal{H}^{s}([0,1)^{m\times n}), &\text{if}\ \sum\limits_{T=1}^{+\infty}\frac{1}{\psi(T)T^{2}}\left(\frac{T^{\frac{1}{n}}}{\psi(T)^{\frac{1}{m}}}\right)^{mn-s}=+\infty, \end{cases}
\end{equation*}
for every $\boldsymbol{\gamma}\in[0,1)^{m}\setminus\{\boldsymbol{0}\}$. Moreover, the convergent case still holds for every $\boldsymbol{\gamma}\in[0,1)^{m}$ and every decreasing function $\psi$ without the assumption $\lim\limits_{T\to+\infty}\psi(T)=0$.
\end{lemma}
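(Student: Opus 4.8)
Since this is a theorem of T. Kim and W. Kim, I only sketch the line of proof I would follow. Put $G(\psi):=[0,1)^{m\times n}\setminus D_{m,n}^{\boldsymbol{\gamma}}(\psi)$. The first step is to recast $G(\psi)$ as a $\limsup$ set: unwinding the definition of $\psi$-Dirichlet and using that $\psi$ is non-increasing, one checks
\[
G(\psi)=\bigcap_{N\in\mathbb{N}}\bigcup_{T\geq N}E_{T},\qquad
E_{T}:=\Big\{A\in[0,1)^{m\times n}:\ \langle A\boldsymbol{q}-\boldsymbol{\gamma}\rangle^{m}\geq\psi(T)\ \text{for all}\ \boldsymbol{q}\in\mathbb{Z}^{n}\ \text{with}\ 1\leq\|\boldsymbol{q}\|^{n}\leq T\Big\},
\]
so $G(\psi)$ is exactly the set of $A$ lying in infinitely many $E_{T}$. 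The natural resolution scale attached to one constraint ``$\langle A\boldsymbol{q}-\boldsymbol{\gamma}\rangle<\psi(T)^{1/m}$ with $\|\boldsymbol{q}\|\approx T^{1/n}$'' is $\rho_{T}:=\psi(T)^{1/m}T^{-1/n}$, and the whole proof rests on pinning down the $\mathcal{H}^{s}$-content of $E_{T}$ at this scale --- an upper bound for the convergence half, and a matching lower bound, uniform over all small sub-cubes, for the divergence half. Two features should be kept in mind: the sets $E_{T}$ are strongly positively correlated (failing at stage $T$ makes failing at a nearby stage likely), so the counting must not be done by a crude union bound; and the hypothesis $\boldsymbol{\gamma}\neq\boldsymbol{0}$ is used exactly here --- for $\boldsymbol{\gamma}=\boldsymbol{0}$ the (inhomogeneous) Dirichlet phenomenon can force $E_{T}=\emptyset$, whereas an arbitrary nonzero shift makes the $E_{T}$ genuinely spread out over $[0,1)^{m\times n}$.

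For the convergence case the quantitative heart is an efficient cover of $E_{T}$ at scale $\rho_{T}$ whose total $\mathcal{H}^{s}$-cost is $\lesssim\frac{1}{\psi(T)T^{2}}\,\rho_{T}^{s-mn}$; the extra gain over the crude bound (which would only give $\rho_{T}^{s-mn}$) comes from the correlations just mentioned and is most cleanly obtained by reindexing stages by the successive best-approximation denominators of $A$ --- this is where continued fractions enter when $m=n=1$ and a geometry-of-numbers argument when $mn>1$. Granting this, for each $N$ one covers $G(\psi)\subseteq\bigcup_{T\geq N}E_{T}$ by these cubes and gets
\[
\mathcal{H}^{s}\big(G(\psi)\big)\ \lesssim\ \liminf_{N\to\infty}\sum_{T\geq N}\frac{1}{\psi(T)T^{2}}\,\rho_{T}^{s-mn}\ =\ \liminf_{N\to\infty}\sum_{T\geq N}\frac{1}{\psi(T)T^{2}}\left(\frac{T^{1/n}}{\psi(T)^{1/m}}\right)^{mn-s},
\]
using the identity $\rho_{T}^{s-mn}=(T^{1/n}/\psi(T)^{1/m})^{mn-s}$; hence convergence of the series in the statement forces $\mathcal{H}^{s}(G(\psi))=0$. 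Since only the upper covering bound is used, this half needs neither $\boldsymbol{\gamma}\neq\boldsymbol{0}$ nor $\lim_{T}\psi(T)=0$, which is why the convergent case holds in full generality.

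For the divergence case one needs the matching lower bound $\mathcal{H}^{s}(G(\psi))=\mathcal{H}^{s}([0,1)^{m\times n})$. The engine is a uniform ubiquity estimate: inside every sufficiently small cube $B$ the Hausdorff content $\mathcal{H}^{s}_{\infty}(E_{T}\cap B)$ is at least a fixed proportion of the $\mathcal{H}^{s}$-cost that $E_{T}$ contributes to $B$, uniformly in $B$ and in all large $T$; together with a quasi-independence bound $\mathcal{H}^{s}_{\infty}(E_{S}\cap E_{T}\cap B)\lesssim\mathcal{H}^{s}_{\infty}(E_{S}\cap B)\cdot(\text{cost of }E_{T}\text{ in }B)$ for $S\neq T$, this exhibits $\{E_{T}\}$ as a locally ubiquitous system in the sense of Beresnevich--Dickinson--Velani, and the Hausdorff-measure form of the divergence Borel--Cantelli lemma then carves out a Cantor-type subset of $G(\psi)$ of full $\mathcal{H}^{s}$-measure whenever the stated $s$-series diverges. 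In the genuinely sub-Dirichlet regime $\psi(T)\,T\to0$ one has $\mu_{mn}(E_{T})\to1$, so $\mu_{mn}(G(\psi))=1$ is immediate and a Mass Transference Principle of Beresnevich--Velani upgrades it to the $\mathcal{H}^{s}$-statement; outside that regime the ubiquity route is essential, and so is $\boldsymbol{\gamma}\neq\boldsymbol{0}$, which is what makes the $E_{T}$ nonempty and uniformly enough distributed for ubiquity to hold.

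I expect the main obstacle to be the geometric--arithmetic core common to both halves: the sharp two-sided estimate, for each stage, of the $\mathcal{H}^{s}$-content of the ``failure set'' $E_{T}$, and its uniform ubiquity inside arbitrary sub-cubes, keeping track of the strong correlations between consecutive stages. This reduces to understanding the fine distribution of $\{A\boldsymbol{q}-\boldsymbol{\gamma}:1\leq\|\boldsymbol{q}\|^{n}\leq T\}$ modulo $\mathbb{Z}^{m}$ as $A$ varies, which I would approach either by a direct geometry-of-numbers / pigeonhole analysis (reducing, for $m=n=1$, to the three-distance theorem and the continued fraction algorithm, and recovering Ram\'{\i}rez's one-dimensional picture), or via the Dani correspondence identifying membership of $(A,\boldsymbol{\gamma})$ in $D_{m,n}^{\boldsymbol{\gamma}}(\psi)$ with the trajectory $(g_{t}\Lambda_{A,\boldsymbol{\gamma}})_{t\geq0}$ in the space of affine unimodular lattices eventually entering a shrinking family of cusp neighbourhoods. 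The remaining steps --- the $\limsup$ reformulation, a Cauchy-condensation / de la Vall\'ee Poussin comparison (in the spirit of Lemma~\ref{54}) relating the series to its dyadic subsequences, and the bookkeeping over $\boldsymbol{\gamma}$ --- are routine.
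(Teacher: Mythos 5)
This lemma is quoted in the paper as an external result, cited directly from \cite[Theorem~1.4]{KK}, and no proof is given in the present paper; there is therefore no internal argument to compare your sketch against. Your $\limsup$ reformulation $G(\psi)=\bigcap_{N}\bigcup_{T\geq N}E_{T}$ is correct, the identity $\rho_{T}^{\,s-mn}=(T^{1/n}/\psi(T)^{1/m})^{mn-s}$ checks out, and the overall strategy you describe --- per-stage covering estimates of the failure sets $E_{T}$ at the scale $\rho_{T}=\psi(T)^{1/m}T^{-1/n}$ for the convergence half, a local ubiquity / mass transference argument (in the Beresnevich--Dickinson--Velani framework, possibly routed through the Dani correspondence) for the divergence half, and the observation that $\boldsymbol{\gamma}\neq\boldsymbol{0}$ and $\psi(T)\to0$ are only needed on the divergence side --- is a reasonable and well-informed roadmap for a theorem of this type. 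But it remains a roadmap: the two technical pillars you yourself flag as the crux (a sharp upper bound on the $\mathcal{H}^{s}$-covering cost of $E_{T}$ exploiting correlations between stages, and the uniform local ubiquity estimate inside arbitrary sub-cubes) are asserted rather than proved, and verifying whether your proposed route coincides with the actual argument in \cite{KK} would require consulting that paper, not this one. For the purposes of the paper under review, the correct treatment is precisely what the authors do: invoke \cite[Theorem~1.4]{KK} as a black box.
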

\section{Proof of Theorem $\ref{27}$}\label{300}
Firstly, we show that $\Omega^{\boldsymbol{\gamma}}(m,n)$ is contained in $\textbf{Sing}_{m,n}^{\boldsymbol{\gamma}}(1)$, which is crucial in proving Theorem $\ref{27}$ (\rmnum{2}).
\begin{lemma}\label{302}
For all $m,n\in\mathbb{N}$ and any $\boldsymbol{\gamma}\in[0,1)^{m}$, we have
\begin{equation*}
\Omega^{\boldsymbol{\gamma}}(m,n)\subset\mathbf{Sing}_{m,n}^{\boldsymbol{\gamma}}(1).
\end{equation*}
\end{lemma}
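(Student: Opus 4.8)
The plan is to show that if $A \in \Omega^{\boldsymbol{\gamma}}(m,n)$, then for every $\epsilon > 0$ the pair $(A,\boldsymbol{\gamma})$ is $\epsilon\cdot\psi_1$-Dirichlet, i.e.\ for all sufficiently large $T$ there exists $\boldsymbol{q}\in\mathbb{Z}^n$ with $\langle A\boldsymbol{q}-\boldsymbol{\gamma}\rangle^m < \epsilon/T$ and $1 \leq \|\boldsymbol{q}\|^n \leq T$. The key input is Lemma \ref{36}: membership in $\Omega(m,n)$ is equivalent to $S_l(A,\boldsymbol{\gamma}) < +\infty$ for all $l\in\mathbb{N}$, in particular $S_1(A,\boldsymbol{\gamma}) = \sum_{t=1}^\infty t^{n-1}\left(\min_{1\leq\|\boldsymbol{q}\|\leq t}\langle A\boldsymbol{q}-\boldsymbol{\gamma}\rangle\right)^m < +\infty$. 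The second key input is Lemma \ref{54} (the Olivier-type theorem from \cite{DLVP}): setting $a_t := \left(\min_{1\leq\|\boldsymbol{q}\|\leq t}\langle A\boldsymbol{q}-\boldsymbol{\gamma}\rangle\right)^m$, this is a decreasing sequence of non-negative numbers and $\sum_{t=1}^\infty t^{n-1} a_t < +\infty$, so Lemma \ref{54} gives $\lim_{t\to\infty} t^n a_t = 0$, that is,
\begin{equation*}
\lim_{t\to\infty} t^n \left(\min_{1\leq\|\boldsymbol{q}\|\leq t}\langle A\boldsymbol{q}-\boldsymbol{\gamma}\rangle\right)^m = 0.
\end{equation*}

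From here I would argue directly. Fix $\epsilon > 0$. By the limit above, there is $T_0$ such that for all integers $t \geq T_0$ we have $t^n \left(\min_{1\leq\|\boldsymbol{q}\|\leq t}\langle A\boldsymbol{q}-\boldsymbol{\gamma}\rangle\right)^m < \epsilon$. Now take any real $T \geq T_0$ and let $t = \lfloor T^{1/n}\rfloor$; for $T$ large this satisfies $t \geq T_0$ and $t^n \leq T$. Choose $\boldsymbol{q} = \boldsymbol{q}(t) \in \mathbb{Z}^n$ with $1 \leq \|\boldsymbol{q}\| \leq t$ realizing the minimum, so $\langle A\boldsymbol{q}-\boldsymbol{\gamma}\rangle^m = \min_{1\leq\|\boldsymbol{q}'\|\leq t}\langle A\boldsymbol{q}'-\boldsymbol{\gamma}\rangle^m < \epsilon/t^n \leq \epsilon/T$ (using $t \leq T^{1/n}$, hence $t^n \leq T$), while $1 \leq \|\boldsymbol{q}\|^n \leq t^n \leq T$. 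Thus $(A,\boldsymbol{\gamma})$ satisfies the $\psi_1$-Dirichlet condition with constant $\epsilon$ for all large $T$, which means $A \in D_{m,n}^{\boldsymbol{\gamma}}(\epsilon\cdot\psi_1)$. Since $\epsilon > 0$ was arbitrary, $A \in \bigcap_{\epsilon>0} D_{m,n}^{\boldsymbol{\gamma}}(\epsilon\cdot\psi_1) = \mathbf{Sing}_{m,n}^{\boldsymbol{\gamma}}(1)$.

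One minor point needs care: the definition of $\psi$-Dirichlet requires $\psi$ to be a \emph{decreasing} function $\mathbb{N}\to\mathbb{R}_{\geq 0}$, and the inequality $\langle A\boldsymbol{q}-\boldsymbol{\gamma}\rangle^m < \psi(T)$ is evaluated at the integer $T$; I should therefore state the Dirichlet condition only for integer $T$, or note that replacing $T$ by $\lceil T \rceil$ changes nothing since $\psi_1$ is decreasing and $\epsilon/\lceil T\rceil \leq \epsilon/T$ goes the favorable way — actually the cleanest route is to verify the condition for every sufficiently large integer $T$ directly, which is exactly what the argument above does. There is essentially no serious obstacle here; the only thing to be slightly careful about is the bookkeeping between $\|\boldsymbol{q}\|$, $\|\boldsymbol{q}\|^n$, and the floor $\lfloor T^{1/n}\rfloor$, and confirming that $a_t$ is genuinely non-increasing (it is, since the minimum is taken over a set that grows with $t$). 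I would also remark that combining this lemma with Lemma \ref{301} will immediately yield the upper bound in Theorem \ref{114}(i) and, together with the fact that $\mathbf{Sing}_{m,n}^{\boldsymbol{0}}(1)$ is Lebesgue null, the measure-zero statement Theorem \ref{27}(ii).
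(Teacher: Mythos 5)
Your overall strategy is the paper's own: use Lemma~\ref{36} to get $S_1(A,\boldsymbol{\gamma})<\infty$, feed the decreasing sequence $a_t=\left(\min_{1\leq\|\boldsymbol{q}\|\leq t}\langle A\boldsymbol{q}-\boldsymbol{\gamma}\rangle\right)^m$ into Lemma~\ref{54} to conclude $t^n a_t\to 0$, and then read off the $\epsilon\cdot\psi_1$-Dirichlet condition. That is exactly the paper's route, so the two proofs are essentially the same.

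However, there is a concrete error in your final inequality chain. You set $t=\lfloor T^{1/n}\rfloor$, so $t^n\leq T$, and then write $\epsilon/t^n\leq\epsilon/T$, attributing it to $t^n\leq T$. The direction is reversed: $t^n\leq T$ gives $\epsilon/t^n\geq\epsilon/T$, so the bound $a_t<\epsilon/t^n$ does \emph{not} by itself yield $a_t<\epsilon/T$. The fix is routine: since $\epsilon$ is arbitrary, run the limit with $\epsilon'=\epsilon/2^n$ and use that $t=\lfloor T^{1/n}\rfloor$ satisfies $t^n>(T^{1/n}-1)^n\geq T/2^n$ once $T\geq 2^n$; then $a_t<\epsilon'/t^n<2^n\epsilon'/T=\epsilon/T$, while still $1\leq\|\boldsymbol{q}\|^n\leq t^n\leq T$. (You cannot instead take $t=\lceil T^{1/n}\rceil$, since the witnessing $\boldsymbol{q}$ could then violate $\|\boldsymbol{q}\|^n\leq T$.) With that one-line patch the argument is complete and matches the paper's.
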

\begin{proof}[\rm \textbf{Proof}]
For any $A\in\Omega^{\boldsymbol{\gamma}}(m,n)$, by Lemma $\ref{36}$, we have
\begin{equation*}
\sum_{t=1}^{\infty}t^{n-1}\cdot\left(\min_{1\leq\|\boldsymbol{q}\|\leq t}\langle A\boldsymbol{q}-\boldsymbol{\gamma}\rangle^{m}\right)<+\infty.
\end{equation*}
Since $\min\limits_{1\leq\|\boldsymbol{q}\|\leq t}\langle A\boldsymbol{q}-\boldsymbol{\gamma}\rangle^{m}$ is decreasing, and a straightforward consequence of Lemma $\ref{54}$ is that
\begin{equation*}
\lim_{t\to+\infty}t^{n}\cdot\left(\min_{1\leq\|\boldsymbol{q}\|\leq t}\langle A\boldsymbol{q}-\boldsymbol{\gamma}\rangle^{m}\right)=0.
\end{equation*}
Thus, for any $\epsilon>0$, when $T$ is large enough, we have
\begin{equation*}
\min_{1\leq\|\boldsymbol{q}\|\leq T^{\frac{1}{n}}}\langle A\boldsymbol{q}-\boldsymbol{\gamma}\rangle^{m}<\epsilon\cdot T^{-1}.
\end{equation*}
Therefore
\begin{equation*}
A\in\bigcap_{\epsilon>0}D_{m,n}^{\boldsymbol{\gamma}}(\epsilon\cdot\psi_{1})=\mathbf{Sing}_{m,n}^{\boldsymbol{\gamma}}(1).
\end{equation*}
\textrm{}
\end{proof}
Now we are able to prove Theorem $\ref{27}$.
\begin{proof}[\rm \textbf{Proof of Theorem $\ref{27}$}]
(\rmnum{1}) Note that the conclusion of (\rmnum{1}) is equivalent to
\begin{equation*}
A\in\boldsymbol{\rm Bad}^{\boldsymbol{0}}(m,n) \quad \Leftrightarrow \quad \Omega_{A}(m,n)=\emptyset.
\end{equation*}
Firstly, we prove the ``$\Rightarrow$'' part. Let $A\in\boldsymbol{\rm Bad}^{\boldsymbol{0}}(m,n)$. By Lemma $\ref{36}$, it sufficient to show that for all $\boldsymbol{\gamma}\in[0,1)^{m}$, there exists $l\in\mathbb{N}$, such that
\begin{equation*}
S_{l}(A,\boldsymbol{\gamma})=+\infty.
\end{equation*}
Since $A\in\boldsymbol{\rm Bad}^{\boldsymbol{0}}(m,n)$, there exists a positive constant $c:=c(A)$, satisfies
\begin{equation}\label{58}
\|\boldsymbol{q}\|^{n}\langle A\boldsymbol{q}\rangle^{m}\geq c\ {\rm for\ all}\ \boldsymbol{q}\in\mathbb{Z}^{n}\setminus\{\boldsymbol{0}\}.
\end{equation}
According to the range of $\boldsymbol{\gamma}$, we consider the following two cases. \\
\textbf{Case 1:} $\boldsymbol{\gamma}\in\bigcup\limits_{\boldsymbol{q}\in\mathbb{Z}^{n}}\bigcup\limits_{\boldsymbol{p}\in\mathbb{Z}^{m}}\{A\boldsymbol{q}+\boldsymbol{p}\}$. Then there exists $\boldsymbol{q}_{0}\in\mathbb{Z}^{n}$, $\boldsymbol{p}_{0}\in\mathbb{Z}^{m}$, such that
\begin{equation*}
\boldsymbol{\gamma}=A\boldsymbol{q}_{0}+\boldsymbol{p}_{0}.
\end{equation*}
Let $l=\|\boldsymbol{q}_{0}\|+1$, then we have
\begin{equation*}
\begin{aligned}
S_{l}(A,\boldsymbol{\gamma})&=\sum_{t=l}^{\infty}t^{n-1}\min_{l\leq \|\boldsymbol{q}\|\leq t}\langle A\boldsymbol{q}-\boldsymbol{\gamma}\rangle^{m}
=\sum_{t=l}^{\infty}t^{n-1}\min_{l\leq \|\boldsymbol{q}\|\leq t}\langle A(\boldsymbol{q}-\boldsymbol{q}_{0})\rangle^{m}\\
&\geq \sum_{t=l}^{\infty}t^{n-1}\min_{1\leq \|\boldsymbol{q}-\boldsymbol{q}_{0}\|\leq t+\|\boldsymbol{q}_{0}\|}\langle A(\boldsymbol{q}-\boldsymbol{q}_{0})\rangle^{m}
=\sum_{t=l}^{\infty}t^{n-1}\min_{1\leq \|\boldsymbol{q}\|\leq t+\|\boldsymbol{q}_{0}\|}\langle A\boldsymbol{q}\rangle^{m}\\
&\geq\sum_{t=l}^{\infty}t^{n-1}\min_{1\leq \|\boldsymbol{q}\|\leq t+\|\boldsymbol{q}_{0}\|}c\|\boldsymbol{q}\|^{-n}
=c\sum_{t=l}^{\infty}t^{n-1}(t+\|\boldsymbol{q}_{0}\|)^{-n}=+\infty.
\end{aligned}
\end{equation*}
The first inequality is due to $\|\cdot\|$ satisfies triangle inequality and the second inequality is due to $\eqref{58}$. By Lemma $\ref{36}$, we have $\boldsymbol{\gamma}\notin\Omega_{A}(m,n)$. \\
\textbf{Case 2:}
$\boldsymbol{\gamma}\notin\bigcup\limits_{\boldsymbol{q}\in\mathbb{Z}^{n}}\bigcup\limits_{\boldsymbol{p}\in\mathbb{Z}^{m}}\{A\boldsymbol{q}+\boldsymbol{p}\}$. We claim that $S_{1}(A,\boldsymbol{\gamma})=+\infty$. In fact, note that $\min\limits_{1\leq \|\boldsymbol{q}\|\leq t}\langle A\boldsymbol{q}-\boldsymbol{\gamma}\rangle$ is decreasing, so the limit of $\min\limits_{1\leq \|\boldsymbol{q}\|\leq t}\langle A\boldsymbol{q}-\boldsymbol{\gamma}\rangle$ exists. When
\begin{equation*}
\lim_{t\to\infty}\min_{1\leq \|\boldsymbol{q}\|\leq t}\langle A\boldsymbol{q}-\boldsymbol{\gamma}\rangle\neq 0,
\end{equation*}
since $\lim\limits_{t\to\infty}t^{n-1}\min\limits_{1\leq \|\boldsymbol{q}\|\leq t}\langle A\boldsymbol{q}-\boldsymbol{\gamma}\rangle^{m}\neq0$, we have $S_{1}(A,\boldsymbol{\gamma})=+\infty$. The left is just the case that
\begin{equation}\label{61}
\lim_{t\to\infty}\min_{1\leq \|\boldsymbol{q}\|\leq t}\langle A\boldsymbol{q}-\boldsymbol{\gamma}\rangle=0.
\end{equation}
Since $\langle A\boldsymbol{q}-\boldsymbol{\gamma}\rangle>0$ for all $\boldsymbol{q}\in\mathbb{Z}^{n}$, we can construct a infinite sequence $\{t_{k}\}_{k=1}^{\infty}$ satisfies for any $k\geq1$ and $t<t_{k+1}$,
\begin{equation*}
\min_{1\leq\|\boldsymbol{q}\|\leq t}\langle A\boldsymbol{q}-\boldsymbol{\gamma}\rangle\geq\min_{1\leq\|\boldsymbol{q}\|\leq t_{k}}\langle A\boldsymbol{q}-\boldsymbol{\gamma}\rangle.
\end{equation*}
More exactly, take $t_{1}=1$, for any $k\geq1$, suppose the positive integers $t_{1},t_{2},\cdots,t_{k}$ have been determined, let
\begin{equation*}
t_{k+1}=\min\left\{t\in\mathbb{N}:t>t_{k}\ {\rm and}\ \min_{1\leq\|\boldsymbol{q}\|\leq t}\langle A\boldsymbol{q}-\boldsymbol{\gamma}\rangle<\min_{1\leq\|\boldsymbol{q}\|\leq t_{k}}\langle A\boldsymbol{q}-\boldsymbol{\gamma}\rangle\right\}.
\end{equation*}
What is more, by the definition of $\{t_{k}\}_{k=1}^{\infty}$, for every $k\geq1$, we can choose $\boldsymbol{q}_{k}\in\mathbb{Z}^{n}$ with
\begin{equation*}
\|\boldsymbol{q}_{k}\|=t_{k}
\end{equation*}
and
\begin{equation*}
\langle A\boldsymbol{q}_{k}-\boldsymbol{\gamma}\rangle=\min_{1\leq \|\boldsymbol{q}\|\leq t_{k}}\langle A\boldsymbol{q}-\boldsymbol{\gamma}\rangle.
\end{equation*}
Therefore
\begin{equation}\label{62}
\begin{aligned}
S_{1}(A,\boldsymbol{\gamma})=\sum_{t=1}^{+\infty}t^{n-1}\min_{1\leq \|\boldsymbol{q}\|\leq t}\langle A\boldsymbol{q}-\boldsymbol{\gamma}\rangle^{m}&=\sum_{k=1}^{+\infty}\sum_{t=t_{k}}^{t_{k+1}-1}t^{n-1}\min_{1\leq \|\boldsymbol{q}\|\leq t}\langle A\boldsymbol{q}-\boldsymbol{\gamma}\rangle^{m}\\
&=\sum_{k=1}^{+\infty}\left(\min_{1\leq \|\boldsymbol{q}\|\leq t_{k}}\langle A\boldsymbol{q}-\boldsymbol{\gamma}\rangle^{m}\sum_{t=t_{k}}^{t_{k+1}-1}t^{n-1}\right)\\
&=\sum_{k=1}^{+\infty}\left(\langle A\boldsymbol{q}_{k}-\boldsymbol{\gamma}\rangle^{m}\sum_{t=\|\boldsymbol{q}_{k}\|}^{ \|\boldsymbol{q}_{k+1}\|-1}t^{n-1}\right).
\end{aligned}
\end{equation}
Since $\langle\cdot\rangle$ satisfies the triangle inequality, we have
\begin{equation}\label{63}
\begin{aligned}
2\langle A\boldsymbol{q}_{k}-\boldsymbol{\gamma}\rangle>\langle A\boldsymbol{q}_{k+1}-\boldsymbol{\gamma}\rangle+\langle A\boldsymbol{q}_{k}-\boldsymbol{\gamma}\rangle&\geq\langle A\boldsymbol{q}_{k+1}-\boldsymbol{\gamma}-(A\boldsymbol{q}_{k}-\boldsymbol{\gamma})\rangle\\&=\langle A(\boldsymbol{q}_{k+1}-\boldsymbol{q}_{k})\rangle.
\end{aligned}
\end{equation}
The first inequality of $\eqref{63}$ is due to the choice of $\{\boldsymbol{q}_{k}\}_{k=1}^{\infty}$. Applying Lemma $\ref{44}$ to $\epsilon=2\langle A\boldsymbol{q}_{k}-\boldsymbol{\gamma}\rangle$, we obtain that
\begin{equation*}
\|\boldsymbol{q}_{k+1}-\boldsymbol{q}_{k}\|\geq\min\{\|\boldsymbol{q}\|: \boldsymbol{q}\in\mathbb{Z}^{n}\setminus\{\boldsymbol{0}\},\ \langle A\boldsymbol{q}\rangle<2\langle A\boldsymbol{q}_{k}-\boldsymbol{\gamma}\rangle\}\geq\left(\frac{c}{2^{m}\langle A\boldsymbol{q}_{k}-\boldsymbol{\gamma}\rangle^{m}}\right)^{\frac{1}{n}}.
\end{equation*}
It follows that
\begin{equation}\label{64}
\langle A\boldsymbol{q}_{k}-\boldsymbol{\gamma}\rangle^{m}\geq c\cdot2^{-m}\|\boldsymbol{q}_{k+1}-\boldsymbol{q}_{k}\|^{-n}.
\end{equation}
Combining $\eqref{62}$ and $\eqref{64}$, we have
\begin{equation*}
S_{1}(A,\boldsymbol{\gamma})\geq c\cdot2^{-m}\sum_{k=1}^{+\infty}\left(\|\boldsymbol{q}_{k+1}-\boldsymbol{q}_{k}\|^{-n}\sum_{t=\|\boldsymbol{q}_{k}\|}^{\|\boldsymbol{q}_{k+1}\| -1}t^{n-1}\right).
\end{equation*}
It suffices to prove that
\begin{equation*}
\sum_{k=1}^{+\infty}\left(\|\boldsymbol{q}_{k+1}-\boldsymbol{q}_{k}\|^{-n}\sum_{t=\|\boldsymbol{q}_{k}\|}^{\|\boldsymbol{q}_{k+1}\|-1}t^{n-1}\right)=+\infty.
\end{equation*}
We will finish it by two cases according to $\limsup\limits_{k\to+\infty}\|\boldsymbol{q}_{k+1}\|\cdot\|\boldsymbol{q}_{k}\|^{-1}<+\infty$ or $=+\infty$.\\
(1) If $\limsup\limits_{k\to+\infty}\|\boldsymbol{q}_{k+1}\|\cdot\|\boldsymbol{q}_{k}\|^{-1}<+\infty$, then there exists $1<c_{1}<+\infty$, such that
\begin{equation}\label{66}
\|\boldsymbol{q}_{k+1}\|\cdot\|\boldsymbol{q}_{k}\|^{-1}\leq c_{1},\ \forall\ k\geq1.
\end{equation}
It follows that
\begin{equation*}
\begin{aligned}
\sum_{k=1}^{+\infty}\left(\|\boldsymbol{q}_{k+1}-\boldsymbol{q}_{k}\|^{-n}\sum_{t=\|\boldsymbol{q}_{k}\|}^{\|\boldsymbol{q}_{k+1}\|-1}t^{n-1}\right)
&\geq \sum_{k=1}^{+\infty}\left(2^{-n}\|\boldsymbol{q}_{k+1}\|^{-n}\sum_{t=\|\boldsymbol{q}_{k}\|}^{\|\boldsymbol{q}_{k+1}\|-1}t^{n-1}\right) \\
&\geq 2^{-n}\sum_{k=1}^{+\infty}\left(\|\boldsymbol{q}_{k+1}\|^{-n}\|\boldsymbol{q}_{k}\|^{n}\sum_{t=\|\boldsymbol{q}_{k}\|}^{\|\boldsymbol{q}_{k+1}\|-1}t^{-1}\right)\\
&\geq 2^{-n}c_{1}^{-n}\sum_{t=1}^{+\infty}t^{-1}=+\infty.
\end{aligned}
\end{equation*}
The first inequality is due to $\|\cdot\|$ satisfies the triangle inequality. The third inequality is due to $\eqref{66}$. \\
(2) If $\limsup\limits_{k\to+\infty}\|\boldsymbol{q}_{k+1}\|\cdot\|\boldsymbol{q}_{k}\|^{-1}=+\infty$, since the function $f(x)=x^{n-1}$ is increasing in the interval $[0,+\infty)$, we have
\begin{equation*}
\begin{aligned}
&\quad\sum_{k=1}^{+\infty}\left(\|\boldsymbol{q}_{k+1}-\boldsymbol{q}_{k}\|^{-n}\sum_{t=\|\boldsymbol{q}_{k}\|}^{\|\boldsymbol{q}_{k+1}\|-1}t^{n-1}\right)\\
&\geq \sum_{k=1}^{+\infty}\left(\|\boldsymbol{q}_{k+1}-\boldsymbol{q}_{k}\|^{-n}\int_{\|\boldsymbol{q}_{k}\|-1}^{\|\boldsymbol{q}_{k+1}\|-1}x^{n-1}\mathrm{d}x\right) \\ &=n^{-1}\sum_{k=1}^{+\infty}\|\boldsymbol{q}_{k+1}-\boldsymbol{q}_{k}\|^{-n}\left[(\|\boldsymbol{q}_{k+1}\|-1)^{n}-(\|\boldsymbol{q}_{k}\|-1)^{n}\right]\\
&\geq n^{-1}2^{-n}\sum_{k=1}^{+\infty}\|\boldsymbol{q}_{k+1}\|^{-n}\left[(\|\boldsymbol{q}_{k+1}\|-1)^{n}-(\|\boldsymbol{q}_{k}\|-1)^{n}\right].
\end{aligned}
\end{equation*}
Since $\limsup\limits_{k\to+\infty}\frac{\|\boldsymbol{q}_{k+1}\|}{\|\boldsymbol{q}_{k}\|}=+\infty$ and $\lim\limits_{k\to+\infty}\|\boldsymbol{q}_{k}\|=+\infty$, we have
\begin{equation*}
\limsup_{k\to+\infty}\|\boldsymbol{q}_{k+1}\|^{-n}\left[(\|\boldsymbol{q}_{k+1}\|-1)^{n}-(\|\boldsymbol{q}_{k}\|-1)^{n}\right]=1.
\end{equation*}
It follows that
\begin{equation*}
\sum_{k=1}^{+\infty}\|\boldsymbol{q}_{k+1}\|^{-n}\left[(\|\boldsymbol{q}_{k+1}\|-1)^{n}-(\|\boldsymbol{q}_{k}\|-1)^{n}\right]=+\infty.
\end{equation*}
Therefore
\begin{equation*}
\sum_{k=1}^{+\infty}\left(\|\boldsymbol{q}_{k+1}-\boldsymbol{q}_{k}\|^{-n}\sum_{t=\|\boldsymbol{q}_{k}\|}^{\|\boldsymbol{q}_{k+1}\|-1}t^{n-1}\right)=+\infty.
\end{equation*}
Secondly, we prove the ``$\Leftarrow$'' part by contradiction. Let $A\in[0,1)^{m\times n}$ with $\Omega_{A}(m,n)=\emptyset$. Suppose that $A\notin{\boldsymbol{\rm Bad}}^{\boldsymbol{0}}(m,n)$, then we have
\begin{equation}\label{72}
\liminf_{\boldsymbol{q}\in\mathbb{Z}^{n},\|\boldsymbol{q}\|\to+\infty}\|\boldsymbol{q}\|^{n}\langle A\boldsymbol{q}\rangle^{m}=0.
\end{equation}
If there exists $\boldsymbol{q}\in\mathbb{Z}^{n}\setminus\{\boldsymbol{0}\}$ such that $\langle A\boldsymbol{q}\rangle=0$, then for any $\psi\in\mathcal{D}$, we have $\langle A\boldsymbol{q}\rangle=0<\psi(\|\boldsymbol{q}\|)$ for infinitely many $\boldsymbol{q}\in\mathbb{Z}^{n}$. Thus, $\Omega_{A}(m,n)\neq\emptyset$. So $\langle A\boldsymbol{q}\rangle\neq0$ for every $\boldsymbol{q}\in\mathbb{Z}^{n}\setminus\{\boldsymbol{0}\}$. By $\eqref{72}$, there exists a sequence $\{\boldsymbol{q}_{k}\}_{k=1}^{\infty}\subset\mathbb{Z}^{n}\setminus\{\boldsymbol{0}\}$, satisfies
\begin{equation*}
\|\boldsymbol{q}_{k+1}\|\geq(k+1)\cdot(\|\boldsymbol{q}_{1}\|+\cdots+\|\boldsymbol{q}_{k}\|)+1
\end{equation*}
and
\begin{equation*}
\|\boldsymbol{q}_{k+1}\|^{n}\langle A\boldsymbol{q}_{k+1}\rangle^{m}<2^{-m}\|\boldsymbol{q}_{k}\|^{n}\langle A\boldsymbol{q}_{k}\rangle^{m}.
\end{equation*}
Thus, we have
\begin{equation}\label{73}
\sum_{k=1}^{+\infty}\|\boldsymbol{q}_{k}\|^{n}\langle A\boldsymbol{q}_{k}\rangle^{m}<+\infty
\end{equation}
and
\begin{equation}\label{74}
\sum_{k=K}^{+\infty}\langle A\boldsymbol{q}_{k}\rangle\leq2\langle A\boldsymbol{q}_{K}\rangle\ {\rm for\ any}\ K\in\mathbb{N}.
\end{equation}
Define
\begin{equation}\label{81}
\boldsymbol{\gamma}=\sum_{k=1}^{+\infty}(A\boldsymbol{q}_{k}-\boldsymbol{p}_{k}),
\end{equation}
where $\boldsymbol{p}_{k}\in\mathbb{Z}^{m}$ is such that $\|A\boldsymbol{q}_{k}-\boldsymbol{p}_{k}\|=\langle A\boldsymbol{q}_{k}\rangle$.
Let us write $\boldsymbol{\gamma}$ as $(\gamma_{1},\cdots,\gamma_{m})$ and denote
\begin{equation*}
\boldsymbol{\gamma}'=(\{\gamma_{1}\},\cdots,\{\gamma_{m}\}),
\end{equation*}
where $\{\gamma_{i}\}$ represents the fractional part of $\gamma_{i}$. For each $K\in\mathbb{N}$, denote
\begin{equation}\label{75}
N_{K}=\sum_{k=1}^{K}\|\boldsymbol{q}_{k}\|.
\end{equation}
For any $l\in\mathbb{N}$, choose $K\in\mathbb{N}$ with $N_{K}\geq l$. We claim that $S_{N_{K}}(A,\boldsymbol{\gamma}')<+\infty$, and this will imply, by Lemma $\ref{43}$, $S_{l}(A,\boldsymbol{\gamma}')<+\infty$. Indeed, note that
\begin{equation}\label{76}
\begin{aligned}
&\quad\sum_{t=N_{K+1}+1}^{+\infty}t^{n-1}\min_{N_{K}\leq\|\boldsymbol{q}\|\leq t}\langle A\boldsymbol{q}-\boldsymbol{\gamma'}\rangle^{m} \\
&=\sum_{t=N_{K+1}+1}^{+\infty}t^{n-1}\min_{N_{K}\leq\|\boldsymbol{q}\|\leq t}\langle A\boldsymbol{q}-\boldsymbol{\gamma}\rangle^{m} \\
&=\sum_{i=K+1}^{+\infty}\sum_{t=N_{i}+1}^{N_{i+1}}t^{n-1}\min_{N_{K}\leq\|\boldsymbol{q}\|\leq t}\langle A\boldsymbol{q}-\boldsymbol{\gamma}\rangle^{m}.
\end{aligned}
\end{equation}
By $\eqref{75}$ and the choice of $\{\boldsymbol{q}_{k}\}_{k=1}^{\infty}$, for every $i\geq K+1$ and $N_{i}+1\leq t\leq N_{i+1}$, we have
\begin{equation}\label{77}
N_{K}\leq \|\boldsymbol{q}_{1}+\cdots+\boldsymbol{q}_{i}\|\leq t.
\end{equation}
In view of $\eqref{81}$, $\eqref{76}$ and $\eqref{77}$, we obtain that
\begin{equation}\label{82}
\begin{aligned}
&\quad\sum_{t=N_{K+1}+1}^{+\infty}t^{n-1}\min_{N_{K}\leq\|\boldsymbol{q}\|\leq t}\langle A\boldsymbol{q}-\boldsymbol{\gamma'}\rangle^{m} \\
&\leq \sum_{i=K+1}^{+\infty}\sum_{t=N_{i}+1}^{N_{i+1}}t^{n-1}\langle A(\boldsymbol{q}_{1}+\cdots+\boldsymbol{q}_{i})-\boldsymbol{\gamma}\rangle^{m} \\
&=\sum_{i=K+1}^{+\infty}\left(\left\langle \sum_{k=i+1}^{+\infty}(A\boldsymbol{q}_{k}-\boldsymbol{p}_{k})\right\rangle^{m}\sum_{t=N_{i}+1}^{N_{i+1}}t^{n-1}\right).
\end{aligned}
\end{equation}
Note that
\begin{equation}\label{83}
\begin{aligned}
\left\langle\sum_{k=i+1}^{+\infty}(A\boldsymbol{q}_{k}-\boldsymbol{p}_{k})\right\rangle&\leq \left\|\sum_{k=i+1}^{+\infty}(A\boldsymbol{q}_{k}-\boldsymbol{p}_{k})\right\|\leq \sum_{k=i+1}^{+\infty}\|A\boldsymbol{q}_{k}-\boldsymbol{p}_{k}\|\\ &=\sum_{k=i+1}^{+\infty}\langle A\boldsymbol{q}_{k}\rangle
\leq 2\langle A\boldsymbol{q}_{i+1}\rangle,
\end{aligned}
\end{equation}
the last inequality is due to $\eqref{74}$. What is more, by the choice of $\{\boldsymbol{q}_{k}\}_{k=1}^{\infty}$ and $\eqref{75}$, we have
\begin{equation*}
\lim_{i\to+\infty}\frac{(N_{i+1}+1)^{n}-(N_{i}+1)^{n}}{\|\boldsymbol{q}_{i+1}\|^{n}}=1.
\end{equation*}
It follows that
\begin{equation}\label{84}
\sum_{t=N_{i}+1}^{N_{i+1}}t^{n-1}\leq\int_{N_{i}+1}^{N_{i+1}+1}x^{n-1}\mathrm{d}x =\frac{1}{n}[(N_{i+1}+1)^{n}-(N_{i}+1)^{n}]
\leq c_{2}\|\boldsymbol{q}_{i+1}\|^{n},
\end{equation}
where $c_{2}>0$ is a constant only depends on $A$. By $\eqref{82}$, $\eqref{83}$ and $\eqref{84}$, we have
\begin{equation*}
\sum_{t=N_{K+1}+1}^{+\infty}t^{n-1}\min_{N_{K}\leq\|\boldsymbol{q}\|\leq t}\langle A\boldsymbol{q}-\boldsymbol{\gamma'}\rangle^{m}
\leq 2^{m}c_{2}\sum_{i=K+1}^{+\infty}\|\boldsymbol{q}_{i+1}\|^{n}\langle A\boldsymbol{q}_{i+1}\rangle^{m}<+\infty.
\end{equation*}
The last inequality is due to $\eqref{73}$. Hence $S_{l}(A,\boldsymbol{\gamma'})<+\infty$ for any $l\geq1$. By Lemma $\ref{36}$, we have $\boldsymbol{\gamma'}\in\Omega_{A}(m,n)$, which contradicts with $\Omega_{A}(m,n)=\emptyset$. Hence, $A\in\boldsymbol{\rm Bad}^{\boldsymbol{0}}(m,n)$.

(\rmnum{2}) If $\boldsymbol{\gamma}=\boldsymbol{0}$, by Khintchine's result \cite{AK1}, $$\mu_{mn}(\textbf{Sing}_{m,n}^{\boldsymbol{0}}(1))=0.$$
Combining this and Lemma $\ref{302}$, we have
\begin{equation*}
\mu_{mn}(\Omega^{\boldsymbol{0}}(m,n))=0.
\end{equation*}
If $\boldsymbol{\gamma}\in[0,1)^{m}\setminus\{\boldsymbol{0}\}$, it follows from Lemma $\ref{45}$ that $\mu_{mn}(D_{m,n}^{\boldsymbol{\gamma}}(\psi_{1}))=0$. It follows that $$\mu_{mn}(\mathbf{Sing}_{m,n}^{\boldsymbol{\gamma}}(1))=0.$$ This together with Lemma $\ref{302}$ shows that
\begin{equation*}
\mu_{mn}(\Omega^{\boldsymbol{\gamma}}(m,n))=0.
\end{equation*}
(\rmnum{3}) If $A\in\boldsymbol{\rm Bad}^{\boldsymbol{0}}(m,n)$, by Theorem $\ref{27}$ (\rmnum{1}), we know that
\begin{equation*}
\Omega_{A}(m,n)=\emptyset.
\end{equation*}
If $A\notin\boldsymbol{\rm Bad}^{\boldsymbol{0}}(m,n)$, by \cite[Lemma 12]{JK}, there exists a $\psi_{0}\in\mathcal{D}$, satisfies
\begin{equation*}
\mu_{m}(W_{m,n,A}(\psi_{0}))=0.
\end{equation*}
Since $\Omega_{A}(m,n)\subset W_{m,n,A}(\psi_{0})$, we have $\mu_{m}(\Omega_{A}(m,n))=0$.
\textrm{}
\end{proof}

\section{Proof of Theorems $\ref{114}$, $\ref{120}$ and Corollary $\ref{115}$}\label{400}
\subsection{Proof of Theorem $\ref{114}$ and Corollary $\ref{115}$}
\begin{proof}[\rm \textbf{Proof of Theorem $\ref{114}$}]
Let
\begin{equation*}
\Gamma^{\boldsymbol{\gamma}}(m,n)=\left\{A\in[0,1)^{m\times n}: S_{1}(A,\boldsymbol{\gamma})<\infty\right\}.
\end{equation*}
Recall that $S_{1}(A,\boldsymbol{\gamma})=\sum\limits_{t=1}^{+\infty}t^{n-1}\cdot\min\limits_{1\leq\|\boldsymbol{q}\|\leq t}\langle A\boldsymbol{q}-\boldsymbol{\gamma}\rangle^{m}$. Furthermore, we denote
\begin{equation*}
\mathcal{R}^{\boldsymbol{\gamma}}(m,n)=\{A\in[0,1)^{m\times n}: A\boldsymbol{q}-\boldsymbol{\gamma}\in\mathbb{Z}^{m}\ {\rm for\ some}\ \boldsymbol{q}\in\mathbb{Z}^{n}\setminus\{\boldsymbol{0}\}\},
\end{equation*}
which is the union of countably many hyperplanes of dimension $m(n-1)$. In view of Corollary $\ref{41}$, we have
\begin{equation}\label{85}
\Gamma^{\boldsymbol{\gamma}}(m,n)\setminus\mathcal{R}^{\boldsymbol{\gamma}}(m,n) =\Omega^{\boldsymbol{\gamma}}(m,n)\setminus\mathcal{R}^{\boldsymbol{\gamma}}(m,n).
\end{equation}
Note that
\begin{equation}\label{86}
\bigcup_{\kappa>1}D_{m,n}^{\boldsymbol{\gamma}}(\psi_{\kappa})\subset\Gamma^{\boldsymbol{\gamma}}(m,n).
\end{equation}
Indeed, for any $A\in\bigcup\limits_{\kappa>1}D_{m,n}^{\boldsymbol{\gamma}}(\psi_{\kappa})$, there exists $\kappa>1$ such that $$A\in D_{m,n}^{\boldsymbol{\gamma}}(\psi_{\kappa}).$$ Thus, for all positive integer $t$ large enough, there exists $\boldsymbol{q}\in\mathbb{Z}^{n}$ with $1\leq\|q\|\leq t$, such that $$\langle A\boldsymbol{q}-\boldsymbol{\gamma}\rangle^{m}<(t^{n})^{-\kappa}=t^{-n\kappa}.$$ That is, $$\min\limits_{1\leq\|\boldsymbol{q}\|\leq t}\langle A\boldsymbol{q}-\boldsymbol{\gamma}\rangle^{m}<t^{-n\kappa}.$$ Since $n\kappa>n$, we have that the series $\sum\limits_{t=1}^{+\infty}t^{n-1-n\kappa}$ converges. It follows that the series $S_{1}(A,\boldsymbol{\gamma})=\sum\limits_{t=1}^{+\infty}t^{n-1}\cdot\min\limits_{1\leq\|\boldsymbol{q}\|\leq t}\langle A\boldsymbol{q}-\boldsymbol{\gamma}\rangle^{m}$ converges. Therefore $A\in\Gamma^{\boldsymbol{\gamma}}(m,n)$.\\
(\rmnum{1}) Firstly, by Lemmas $\ref{301}$ and $\ref{302}$, we have
\begin{equation*}
\dim_{\rm H}(\Omega^{\boldsymbol{0}}(m,n))\leq mn\left(1-\frac{1}{m+n}\right).
\end{equation*}
Secondly, the combination of Lemma $\ref{303}$ and $\eqref{86}$ gives
\begin{equation*}
\dim_{\rm H}(\Gamma^{\boldsymbol{0}}(m,n))\geq mn\left(1-\frac{1}{m+n}\right)>m(n-1).
\end{equation*}
Thus, $$\dim_{\rm H}(\Gamma^{\boldsymbol{0}}(m,n)\setminus\mathcal{R}^{\boldsymbol{0}}(m,n))=\dim_{\rm H}(\Gamma^{\boldsymbol{0}}(m,n))\geq mn\left(1-\frac{1}{m+n}\right).$$
It follows from $\eqref{85}$ that $$\dim_{\rm H}(\Omega^{\boldsymbol{0}}(m,n))=\dim_{\rm H}(\Omega^{\boldsymbol{ 0}}(m,n)\setminus\mathcal{R}^{\boldsymbol{0}}(m,n))\geq mn\left(1-\frac{1}{m+n}\right).$$
Therefore $$\dim_{\rm H}(\Omega^{\boldsymbol{0}}(m,n))=mn\left(1-\frac{1}{m+n}\right).$$
(\rmnum{2})
It follows from Lemma $\ref{19}$ that
\begin{equation*}
\dim_{\rm H}(\mathbf{Sing}_{m,n}^{\boldsymbol{\gamma}}(\kappa))\geq m(n-1)+m\left(\frac{m-n\kappa}{m+n\kappa}\right)^{2},\ \forall\ \kappa\in\left[0,\frac{m}{n}\right).
\end{equation*}
This together with $\eqref{86}$ gives
\begin{equation*}
\dim_{\rm H}(\Gamma^{\boldsymbol{\gamma}}(m,n))\geq m(n-1)+m\left(\frac{m-n\kappa}{m+n\kappa}\right)^{2},\   \forall\ \kappa\in\left(1,\frac{m}{n} \right).
\end{equation*}
Letting $\kappa\to1$, we obtain that
\begin{equation*}
\dim_{\rm H}(\Gamma^{\boldsymbol{\gamma}}(m,n))\geq m(n-1)+m\left(\frac{m-n}{m+n}\right)^{2}>m(n-1).
\end{equation*}
Thus,
\begin{equation}\label{87}
\dim_{\rm H}(\Gamma^{\boldsymbol{\gamma}}(m,n)\setminus\mathcal{R}^{\boldsymbol{\gamma}}(m,n))\geq m(n-1)+m\left(\frac{m-n}{m+n}\right)^{2}>m(n-1).
\end{equation}
The combination of $\eqref{85}$ and $\eqref{87}$ gives
\begin{equation*}
\dim_{\rm H}(\Omega^{\boldsymbol{\gamma}}(m,n))=\dim_{\rm H}(\Omega^{\boldsymbol{\gamma}}(m,n)\setminus\mathcal{R}^{\boldsymbol{\gamma}}(m,n))\geq m(n-1)+m\left(\frac{m-n}{m+n}\right)^{2}.
\end{equation*}
\textrm{}
\end{proof}

\begin{proof}[\rm \textbf{Proof of Corollary $\ref{115}$}]
In view of Theorem $\ref{114}$ (\rmnum{2}), we only need to show that $\Omega^{\boldsymbol{\gamma}}(m,n)$ is not an empty set when $n\geq2$. Let
\begin{equation*}
A=\begin{pmatrix}
\gamma_{1} & 0 &\cdots & 0\\
\gamma_{2} & 0 &\cdots & 0\\
\vdots & \vdots & \ddots & \vdots \\
\gamma_{m} & 0 &\cdots & 0
\end{pmatrix}.
\end{equation*}
Then
\begin{equation*}
A(1,q,0,\cdots,0)^{T}=\boldsymbol{\gamma}
\end{equation*}
for all $q\in\mathbb{N}$, where $(1,q,0,\cdots,0)^{T}$ denotes the transpose of $(1,q,0,\cdots,0)$. Hence $\langle A\boldsymbol{q}-\boldsymbol{\gamma}\rangle=0$ for infinitely many $\boldsymbol{q}\in\mathbb{Z}^{n}$. Thus, for any $\psi\in\mathcal{D}$, we have $\langle A\boldsymbol{q}-\boldsymbol{\gamma}\rangle=0<\psi(\|\boldsymbol{q}\|)$ for infinitely many $\boldsymbol{q}\in\mathbb{Z}^{n}$. It follows that $A\in\Omega^{\boldsymbol{\gamma}}(m,n)$. Therefore $\Omega^{\boldsymbol{\gamma}}(m,n)\neq\emptyset$.
\textrm{}
\end{proof}
\subsection{Proof of Theorem $\ref{120}$}
In order to prove Theorem $\ref{120}$, we need to introduce the following notations. Given $\alpha\in[0,1)\setminus\mathbb{Q}$ and $\tau>0$, we denote $$\mathcal{U}_{\tau}[\alpha]:=\{\gamma\in[0,1):{\rm for\ all\ large}\ Q, 1\leq\exists\ q\leq Q\ {\rm such\ that}\ \langle q\alpha-\gamma\rangle<Q^{-\tau}\}.$$ Let $q_{k}=q_{k}(\alpha)$ be the denominator of the $k$-th convergent of the continued fraction of $\alpha$. Recall that $w(\alpha)$ is the irrationality exponent of $\alpha$, that is, $$w(\alpha):=\sup\{s>0:\langle q\alpha\rangle<q^{-s}\ {\rm for\ i.m.}\ q\in\mathbb{N}\}.$$
The following lemma gives a description of $\dim_{\rm H}(\mathcal{U}_{\tau}[\alpha])$ when $w(\alpha)>1$.
\begin{lemma}\label{92}{\rm\cite[Theorem 1]{DL}}
Let $\alpha\in[0,1)\setminus\mathbb{Q}$ with $w(\alpha)>1$. Then
\begin{equation*}
\dim_{\rm H}(\mathcal{U}_{\tau}[\alpha])=\begin{cases} 1, &\text{if}\ \tau<\frac{1}{w(\alpha)},\\ \liminf\limits_{k\to+\infty}\frac{\log\left(n_{k}^{1+\frac{1}{\tau}}\prod_{j=1}^{k-1}n_{j}^{\frac{1}{\tau}}\langle n_{j}\alpha\rangle\right)}{\log(n_{k}\langle n_{k}\alpha\rangle^{-1})}, &\text{if}\ \frac{1}{w(\alpha)}<\tau<1,\\ \liminf\limits_{k\to+\infty}\frac{-\log\left(\prod_{j=1}^{k-1}n_{j}\langle n_{j}\alpha\rangle^{\frac{1}{\tau}}\right)}{\log(n_{k}\langle n_{k}\alpha\rangle^{-1})}, &\text{if}\ 1<\tau<w(\alpha), \\ 0, &\text{if}\ \tau>w(\alpha), \end{cases}
\end{equation*}
where $(n_{k})_{k=1}^{\infty}$ is the maximal subsequence of $(q_{k})_{k=1}^{\infty}$ such that
\begin{equation*}
\begin{cases} n_{k}\langle n_{k}\alpha\rangle^{\tau}<1, &\text{if}\ \frac{1}{w(\alpha)}<\tau<1,\\ n_{k}^{\tau}\langle n_{k}\alpha\rangle<2, &\text{if}\ 1<\tau<w(\alpha). \end{cases}
\end{equation*}
\end{lemma}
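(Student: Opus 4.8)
The plan is to prove Lemma~\ref{92} directly from the continued-fraction algorithm and the three-distance (Steinhaus) theorem, since it is a metric statement about the single orbit $\{q\alpha\bmod 1:q\ge 1\}$ and does not reduce to the lemmas above. Write $\theta_k:=\langle q_k\alpha\rangle\asymp q_{k+1}^{-1}$ and, for $Q\in\mathbb N$, $\mathcal O_Q:=\{q\alpha\bmod 1:1\le q\le Q\}$. The first step is the reformulation: $\gamma\in\mathcal U_\tau[\alpha]$ exactly when $\gamma$ lies in the $Q^{-\tau}$-neighbourhood of $\mathcal O_Q$ for all large $Q$, i.e.
\[
\mathcal U_\tau[\alpha]=\bigcup_{Q_0\ge 1}\bigcap_{Q\ge Q_0}\ \bigcup_{q=1}^{Q}\bigl(q\alpha-Q^{-\tau},\,q\alpha+Q^{-\tau}\bigr)\pmod 1 .
\]
The orbit $\{q\alpha\}$ always lies in $\mathcal U_\tau[\alpha]$, so the dimension is governed by the ``Cantor part'' of those $\gamma$ that are forced, at every scale, to sit extremely close to one of the first $Q$ orbit points.

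The second step is to describe the geometry of $\mathcal O_Q$ over a full continued-fraction block $q_k\le Q<q_{k+1}$. By the three-distance theorem, $\mathcal O_Q$ cuts the circle into at most three interval lengths, all built from $\theta_{k-1}$ and $\theta_k$; the maximal gap $G(Q)$ equals $\asymp\theta_{k-1}-\tfrac{Q-q_k}{q_k}\theta_k$ in the bulk of the block and floors at $\asymp\theta_k$, so $G(Q)\asymp\max\bigl(q_{k+1}^{-1},\ (q_{k+1}-Q)q_k^{-1}q_{k+1}^{-1}\bigr)$. The $Q^{-\tau}$-neighbourhood of $\mathcal O_Q$ is the whole circle precisely when $G(Q)\le 2Q^{-\tau}$, and otherwise is a disjoint union of $\asymp Q$ arcs of length $\asymp Q^{-\tau}$ sitting in the long gaps. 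Optimising $G(Q)Q^{\tau}$ over the block pinpoints the tightest scale at $Q\asymp q_{k+1}$, where $G(Q)Q^{\tau}\asymp q_{k+1}^{\tau}q_k^{-1}$; hence genuine holes are created at block $k$ exactly when $q_k\theta_k^{\tau}\lesssim 1$ (if $\tau<1$) or $q_k^{\tau}\theta_k\lesssim 1$ (if $\tau>1$), which is precisely the condition defining the subsequence $(n_k)$ in the statement. The two extreme regimes then fall out: if $\tau<1/w(\alpha)$ then $q_{k+1}\le q_k^{1/\tau}$ for all large $k$, so $G(Q)\le 2Q^{-\tau}$ eventually and $\mathcal U_\tau[\alpha]$ is cocountable, of dimension $1$; if $\tau>w(\alpha)$ then only finitely many denominators satisfy $\theta_k\lesssim q_k^{-\tau}$, the long gaps are never adequately filled, and one checks the surviving set collapses onto the countable orbit up to a set of dimension $0$.

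The core of the argument is the two middle regimes, where one works along the subsequence $(n_k)$ and builds the natural generalised Cantor set $\mathcal K$: at step $k$, inside each surviving arc from step $k-1$ keep the sub-arcs of length $\asymp n_k^{-\tau}$ centred at the points of $\mathcal O_{n_k}$ lying in it, the number of them being an explicit expression in $n_k$ and $\theta_{n_k}$; one shows that $\mathcal U_\tau[\alpha]$ and $\mathcal K$ agree up to the countable orbit, the key point being that the scales strictly between the $n_k$ impose no extra restriction. The upper bound for $\dim_{\rm H}\mathcal K$ follows by covering $\mathcal K$ with its level-$k$ arcs and minimising the resulting $s$-sum over $k$, producing exactly the displayed $\liminf$ (the products $\prod_{j<k}$ record the accumulated branching against contraction through the blocks). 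For the lower bound I would place the uniform mass distribution on $\mathcal K$, estimate $\mu(B(x,r))$ for $r$ in each scale window between successive levels, and apply the mass distribution principle; the two bounds match.

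I expect the main obstacle to be the case $1<\tau<w(\alpha)$: there the long gaps present at scale $Q\asymp n_k$ are subdivided by the orbit points $q\alpha$ with $n_k<q<n_{k+1}$ in a manner dictated by the partial quotients, and one must track this subdivision, as well as the transition between consecutive continued-fraction blocks, carefully enough to confirm both that the intermediate scales add no new constraint and that $\mathcal K$ is regular enough for the mass distribution principle to be sharp. Once that uniformity over blocks is in place, the dimension formula reduces to a routine, if lengthy, optimisation.
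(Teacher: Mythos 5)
In the paper this lemma is imported wholesale from Kim and Liao (reference \cite{DL}, Theorem~1) with no proof supplied, so there is no in-paper argument to compare against. Your outline does identify the right toolkit---the three-distance theorem to control gap sizes across continued-fraction blocks, the distinguished blocks $(n_k)$ where genuine holes first appear, a Cantor construction living on those blocks, covering for the upper bound, and the mass distribution principle for the lower bound---and this is indeed the circle of ideas used in the cited source.

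That said, as a proof of the exact statement the sketch has a real gap, which you yourself flag. The two middle regimes carry essentially all the content of the theorem: the numerators $\log\bigl(n_k^{1+1/\tau}\prod_{j=1}^{k-1}n_j^{1/\tau}\langle n_j\alpha\rangle\bigr)$ and $-\log\bigl(\prod_{j=1}^{k-1}n_j\langle n_j\alpha\rangle^{1/\tau}\bigr)$ are the precise branching-versus-contraction bookkeeping accumulated over successive blocks, but nothing in your argument derives these specific products or shows that the covering sum and the mass estimate actually match them. You assert that ``scales strictly between the $n_k$ impose no extra restriction,'' yet verifying this---how the orbit points $q\alpha$ with $n_k<q<n_{k+1}$ subdivide the surviving gaps, and how consecutive blocks mesh together so that the mass distribution principle is sharp at every intermediate radius---is exactly where the proof lives, particularly in the regime $1<\tau<w(\alpha)$. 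Without that work, you have established the qualitative dichotomy (dimension $1$ for $\tau<1/w(\alpha)$, dimension $0$ for $\tau>w(\alpha)$) and correctly identified the subsequence $(n_k)$, but you have not obtained the displayed formula.
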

The formula for $\dim_{\rm H}(\mathcal{U}_{\tau}[\alpha])$ with $w(\alpha)>1$ in the Lemma $\ref{92}$ is a little complicated, and we do not know the information of $\dim_{\rm H}(\mathcal{U}_{\tau}[\alpha])$ when $\tau=1,\frac{1}{w(\alpha)},w(\alpha)$ from Lemma $\ref{92}$. On the basis of Lemma $\ref{92}$, under the assumption $w(\alpha)>1$, the following lemma gives a estimate for $\dim_{\rm H}(\mathcal{U}_{\tau}[\alpha])$ when $\tau\in[\frac{1}{w(\alpha)},w(\alpha)]$.
\begin{lemma}\label{93}{\rm(\cite[Theorem 3]{DL})}
For any $\alpha\in[0,1)\setminus\mathbb{Q}$ with $w(\alpha)=w>1$, we have
\begin{equation*}
\frac{\frac{w}{\tau}-1}{w^{2}-1}\leq\dim_{\rm H}(\mathcal{U}_{\tau}[\alpha])\leq \frac{\frac{1}{\tau}+1}{w+1},\ \text{if}\ \frac{1}{w }\leq\tau\leq1,
\end{equation*}
\begin{equation*}
0\leq \dim_{\rm H}(\mathcal{U}_{\tau}[\alpha])\leq \frac{\frac{w}{\tau}-1}{w^{2}-1},\ \text{if}\ 1<\tau\leq w.
\end{equation*}
\end{lemma}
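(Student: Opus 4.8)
The plan is to derive both chains of inequalities directly from the exact dimension formula of Lemma \ref{92}, using only standard continued fraction estimates. Recall $\tfrac{1}{2q_{k+1}}\le\langle q_k\alpha\rangle\le\tfrac{1}{q_{k+1}}$, and that with the present normalisation $w=w(\alpha)=\limsup_{k\to\infty}\tfrac{\log q_{k+1}}{\log q_k}$. Write $n_k=q_{m_k}$ and put $P_k=\log q_{m_k}$, $Q_k=\log q_{m_k+1}$, so that $\log n_k=P_k$ and $\log\langle n_k\alpha\rangle=-Q_k+O(1)$. Passing to logarithms in Lemma \ref{92}, for $\tfrac1w<\tau<1$ the formula reads $\dim_{\rm H}(\mathcal{U}_\tau[\alpha])=\liminf_k A_k/B_k$ with
\begin{equation*}
A_k=\Big(1+\tfrac1\tau\Big)P_k-\sum_{j=1}^{k-1}\Big(Q_j-\tfrac1\tau P_j\Big)+O(k),\qquad B_k=P_k+Q_k+O(1),
\end{equation*}
while for $1<\tau<w$ the numerator becomes $A_k=\sum_{j=1}^{k-1}\big(\tfrac1\tau Q_j-P_j\big)+O(k)$. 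The relevant constraints, all immediate from the definition of the maximal subsequence $(n_k)$ and continued fraction monotonicity, are: $Q_k\ge\tfrac1\tau P_k-O(1)$ when $\tau<1$ (from $n_k\langle n_k\alpha\rangle^\tau<1$) and $Q_k\ge\tau P_k-O(1)$ when $\tau>1$ (from $n_k^\tau\langle n_k\alpha\rangle<2$); also $P_{k+1}\ge Q_k$; and, for every $\varepsilon>0$ and all large $k$, $Q_k\le(w+\varepsilon)P_k$ (the last is simply the $\limsup$ defining $w$). In either regime the first two constraints force the $P_k$ to grow at least geometrically, so the $O(k)$ and $O(1)$ errors are negligible against $P_k$ throughout.

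Two of the four bounds are then essentially immediate. For the upper bound when $\tfrac1w\le\tau\le1$: the history term is subtracted, so $A_k\le(1+\tfrac1\tau)P_k(1+o(1))$ always, and evaluating along a subsequence of $k$ with $Q_k/P_k\to w$ (such convergents exist by the definition of $w$ and, since $w>\tfrac1\tau$, eventually lie in $(n_k)$) makes $B_k=(1+w+o(1))P_k$, giving $\dim_{\rm H}(\mathcal{U}_\tau[\alpha])\le\tfrac{1/\tau+1}{w+1}$. For the lower bound when $1<\tau\le w$: the subsequence condition makes each summand $\tfrac1\tau Q_j-P_j\ge-O(1)$, so $A_k\ge-O(k)=-o(P_k)$ and the $\liminf$ is $\ge0$.

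The remaining two bounds — $\dim_{\rm H}(\mathcal{U}_\tau[\alpha])\ge\tfrac{w/\tau-1}{w^2-1}$ for $\tfrac1w\le\tau\le1$, and $\dim_{\rm H}(\mathcal{U}_\tau[\alpha])\le\tfrac{w/\tau-1}{w^2-1}$ for $1<\tau\le w$ — are dual to each other and form the heart of the proof; I expect them to be the main obstacle. Put $c^\ast=\tfrac{w/\tau-1}{w^2-1}$; for $\tau<1$ one must show $A_k-c^\ast B_k\ge-o(P_k)$ for all large $k$. The crucial difficulty is that bounding $A_k$ from below and $B_k$ from above \emph{independently} is too lossy: it delivers only the strictly smaller value $\tfrac{1/\tau}{w+1}$, the shortfall being governed by a $(1-\tau w)^2$ term, because the continued fraction configurations extremal for $A_k$ and for $B_k$ are incompatible. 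The correct route is variational. Among all sequences $(P_j,Q_j)$ compatible with the constraints above — augmented by $P_{j+1}\le(w+\varepsilon)^{m_{j+1}-m_j}P_j$ and the observation that each convergent skipped between $q_{m_j}$ and $q_{m_{j+1}}$ has ratio at most $\tfrac1\tau$ (when $\tau<1$) resp. at most $\tau$ (when $\tau>1$) — one shows that $\liminf_k A_k/B_k$ is minimised (resp. maximised) by the no-skip, constant-rate-$w$ configuration $Q_j=wP_j=P_{j+1}$. In that configuration $A_k-c^\ast B_k$ is exactly constant in $k$, the identity $\tfrac w\tau-1=c^\ast(w^2-1)$ being precisely what makes the telescoping close, and the extremal value equals $c^\ast$. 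Turning this into a rigorous bound is a one-step (telescoping / potential-function) argument: any deviation toward slower growth or toward skipped convergents moves $A_k-c^\ast B_k$ in the favourable direction, up to an $o(P_k)$ error absorbing the $\varepsilon$'s and $O(k)$'s. The delicate bookkeeping is exactly this control of the skipped convergents, and that is where the argument is least routine. Finally the endpoint values $\tau\in\{\tfrac1w,1,w\}$, not covered by Lemma \ref{92}, are handled by monotonicity of $\tau\mapsto\mathcal{U}_\tau[\alpha]$ together with continuity of the bounds in $\tau$ (for the upper bounds and the trivial lower bound), and by a direct estimate at $\tau=1$ for the lower bound $\tfrac1{w+1}$.
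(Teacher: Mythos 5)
This lemma is imported verbatim from the literature: the paper cites it as Theorem 3 of Kim--Liao \cite{DL} and gives no proof of its own, just as it does with Lemma \ref{92}. So there is no ``paper's proof'' to compare against; what you have written is an attempt to rederive a cited black-box result.

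As a proof, your sketch has a genuine gap at exactly the step you flag yourself. The two ``immediate'' bounds are fine: the computation showing $A_k\le(1+\tfrac1\tau)P_k+O(k)$ together with a subsequence of convergents on which $Q_k/P_k\to w$ (and which eventually belong to $(n_k)$ because $w>\tfrac1\tau$) gives the upper bound $\tfrac{1/\tau+1}{w+1}$ for $\tau\le1$, and nonnegativity of the dimension is trivial. You also correctly verify, in effect, that the ``constant-rate-$w$, no-skip'' configuration $Q_j=wP_j=P_{j+1}$ produces the value $\tfrac{w/\tau-1}{w^2-1}$ in both regimes. But the matching pair of bounds --- the lower bound $\ge\tfrac{w/\tau-1}{w^2-1}$ for $\tfrac1w\le\tau\le1$ and the upper bound $\le\tfrac{w/\tau-1}{w^2-1}$ for $1<\tau\le w$ --- is never actually established. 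You note (correctly) that estimating $A_k$ and $B_k$ independently is lossy and that one needs a variational or telescoping argument showing the constant-rate configuration is extremal among all admissible $(P_j,Q_j)$, including those with skipped convergents; and you then write that the ``delicate bookkeeping'' of the skipped convergents is ``where the argument is least routine,'' without supplying it. Asserting that ``any deviation moves $A_k-c^\ast B_k$ in the favourable direction'' is a claim, not an argument: with skips the increments of $P_j$ are not bounded by $w$, the number of terms in the history sum decouples from the index of $q$, and the sign of the effect on $A_k-c^\ast B_k$ is precisely what needs a proof. Until that monotonicity-under-perturbation statement is formulated and proved, the two nontrivial inequalities of Lemma \ref{93} remain unestablished, so the proposal is an outline rather than a proof. (Your treatment of the endpoint values $\tau\in\{\tfrac1w,1,w\}$ by monotonicity of $\tau\mapsto\mathcal U_\tau[\alpha]$ and continuity of the bounds is reasonable and would work once the open-interval bounds are in place.)
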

The following lemma is crucial in the proof of Theorem $\ref{120}$.
\begin{lemma}\label{304}
Let $\alpha\in[0,1)\setminus\mathbb{Q}$ with $$\liminf_{k\to\infty}\frac{\log q_{k+1}}{\log q_{k}}>1.$$ Then
\begin{equation*}
\dim_{\rm H}\left(\bigcup_{\tau>1}\mathcal{U}_{\tau}[\alpha]\right)=\dim_{\rm H}(\mathcal{U}_{1}[\alpha])=\frac{1}{w(\alpha)+1}.
\end{equation*}
\end{lemma}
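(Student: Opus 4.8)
The plan is to establish the chain of (in)equalities
$$\tfrac{1}{w(\alpha)+1}\ \le\ \dim_{\rm H}(\mathcal{U}_{1}[\alpha])\ \le\ \dim_{\rm H}\!\Big(\bigcup_{\tau>1}\mathcal{U}_{\tau}[\alpha]\Big)\ \le\ \tfrac{1}{w(\alpha)+1}.$$
The middle inequality is immediate since $\mathcal{U}_{1}[\alpha]\subset\bigcup_{\tau>1}\mathcal{U}_{\tau}[\alpha]$ (indeed $\mathcal{U}_{\tau}[\alpha]\subseteq\mathcal{U}_{\tau'}[\alpha]$ whenever $\tau\ge\tau'$, because a smaller exponent imposes a weaker approximation requirement — so the union over $\tau>1$ is just an increasing union). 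The real content is in the two outer inequalities, and both will be extracted from the quantitative formula of Lemma \ref{92} together with the growth hypothesis $\liminf_{k\to\infty}\frac{\log q_{k+1}}{\log q_{k}}>1$. First I would record the consequence of this hypothesis: writing $\beta:=\liminf_{k\to\infty}\frac{\log q_{k+1}}{\log q_{k}}>1$, one has $w(\alpha)=\limsup_{k}\frac{\log q_{k+1}}{\log q_{k}}\ge\beta>1$, and moreover $\langle q_{k}\alpha\rangle\asymp q_{k+1}^{-1}$ gives $\log(q_{k}\langle q_{k}\alpha\rangle^{-1})\sim\log q_{k+1}$ along $k$, so all the $\liminf$ expressions in Lemma \ref{92} become computable in terms of the ratios $\frac{\log q_{k+1}}{\log q_{k}}$.

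For the upper bound on $\dim_{\rm H}\big(\bigcup_{\tau>1}\mathcal{U}_{\tau}[\alpha]\big)$: by countable stability of Hausdorff dimension it suffices to bound $\dim_{\rm H}(\mathcal{U}_{\tau}[\alpha])$ for $\tau$ in a sequence decreasing to $1$, say $\tau\in(1,w(\alpha))$, and let $\tau\to1^{+}$. For such $\tau$, Lemma \ref{93} gives $\dim_{\rm H}(\mathcal{U}_{\tau}[\alpha])\le\frac{w/\tau-1}{w^{2}-1}$ where $w=w(\alpha)$; letting $\tau\to1^{+}$ this tends to $\frac{w-1}{w^{2}-1}=\frac{1}{w+1}$. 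Hence $\dim_{\rm H}\big(\bigcup_{\tau>1}\mathcal{U}_{\tau}[\alpha]\big)\le\frac{1}{w(\alpha)+1}$. (If $w(\alpha)=\infty$ this reads as $0$, consistent with the Liouville case; one should note the growth hypothesis forces $w(\alpha)\ge\beta>1$ so the right-hand bound of Lemma \ref{93} is applicable.)

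For the lower bound $\dim_{\rm H}(\mathcal{U}_{1}[\alpha])\ge\frac{1}{w(\alpha)+1}$: here I expect to invoke the $\tau=1$ endpoint, which is not directly covered by Lemma \ref{92}, so the argument must go through $\tau$ slightly below $1$ together with the monotonicity $\mathcal{U}_{\tau}[\alpha]\subseteq\mathcal{U}_{1}[\alpha]$ for $\tau<1$. For $\tau\in(\frac{1}{w},1)$ Lemma \ref{93} gives $\dim_{\rm H}(\mathcal{U}_{\tau}[\alpha])\ge\frac{w/\tau-1}{w^{2}-1}$, and letting $\tau\to1^{-}$ yields $\dim_{\rm H}(\mathcal{U}_{1}[\alpha])\ge\frac{w-1}{w^{2}-1}=\frac{1}{w+1}$. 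This closes the chain. \emph{The main obstacle} is that the bound just obtained for $\dim_{\rm H}(\mathcal{U}_{1}[\alpha])$ only uses the \emph{general} inequalities of Lemma \ref{93}, valid for all $\alpha$ with $w(\alpha)>1$ — but Lemma \ref{304} asserts an \emph{equality} at $\tau=1$, so I still need the matching upper bound $\dim_{\rm H}(\mathcal{U}_{1}[\alpha])\le\frac{1}{w(\alpha)+1}$, and \emph{this is where the growth hypothesis $\liminf_k\frac{\log q_{k+1}}{\log q_k}>1$ is essential}. The point is that this hypothesis prevents the $q_k$ from having "long stretches of slow growth," so that in the exact formula of Lemma \ref{92} (applied at exponents $\tau$ slightly above $1$, where the relevant subsequence $(n_k)$ is selected by $n_k^{\tau}\langle n_k\alpha\rangle<2$) the $\liminf$ of the quotient stabilizes to $\frac{1}{w(\alpha)+1}$ rather than dropping lower; concretely one checks that under the hypothesis every $q_k$ (for $k$ large) survives into the subsequence $(n_k)$ for $\tau$ near $1$, so that $\prod_{j<k}n_j\langle n_j\alpha\rangle^{1/\tau}$ telescopes against $n_k\langle n_k\alpha\rangle^{-1}\asymp q_{k+1}^{2}$, and taking $\tau\to1^{+}$ gives $\limsup$/$\liminf$ control pinning the dimension at $\frac{1}{w(\alpha)+1}$. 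I would carry this out by: (i) translating all products and $\langle n_k\alpha\rangle$'s into the $q_k$ via $\langle q_k\alpha\rangle\asymp q_{k+1}^{-1}$; (ii) using $\liminf\frac{\log q_{k+1}}{\log q_k}>1$ to show the selected subsequence is eventually all of $(q_k)$ for $\tau$ close to $1$; (iii) evaluating the resulting $\liminf$ and sending $\tau\to1^{+}$, invoking continuity of $\mathcal{U}_{\tau}[\alpha]$'s dimension at the endpoint via the squeeze from Lemma \ref{93} on the other side.
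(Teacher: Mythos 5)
Your proposed chain of inequalities rests on the inclusion $\mathcal{U}_{1}[\alpha]\subset\bigcup_{\tau>1}\mathcal{U}_{\tau}[\alpha]$, and this is backwards. As you yourself observe in the same sentence, $\mathcal{U}_{\tau}[\alpha]\subseteq\mathcal{U}_{\tau'}[\alpha]$ whenever $\tau\ge\tau'$ (larger $\tau$ means smaller $Q^{-\tau}$, a more stringent requirement, hence a smaller set). Applying this with $\tau>1=\tau'$ gives $\mathcal{U}_{\tau}[\alpha]\subseteq\mathcal{U}_{1}[\alpha]$ for every $\tau>1$, so in fact
$$\bigcup_{\tau>1}\mathcal{U}_{\tau}[\alpha]\ \subseteq\ \mathcal{U}_{1}[\alpha],$$
the opposite of what you claim. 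The correct chain to aim for is therefore
$$\tfrac{1}{w(\alpha)+1}\ \le\ \dim_{\rm H}\Bigl(\bigcup_{\tau>1}\mathcal{U}_{\tau}[\alpha]\Bigr)\ \le\ \dim_{\rm H}(\mathcal{U}_{1}[\alpha])\ \le\ \tfrac{1}{w(\alpha)+1},$$
so the two \emph{hard} directions are the outer ones: a lower bound for the union and an upper bound for $\mathcal{U}_{1}[\alpha]$. Your Lemma~\ref{93}--only computation establishes $\dim_{\rm H}\bigl(\bigcup_{\tau>1}\mathcal{U}_{\tau}[\alpha]\bigr)\le\frac{1}{w(\alpha)+1}$, which is correct reasoning but is the easy direction and not one of the two that closes the squeeze.

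Your claimed lower bound $\dim_{\rm H}(\mathcal{U}_{1}[\alpha])\ge\frac{1}{w(\alpha)+1}$ is also broken, for two reasons. First, you invoke ``$\mathcal{U}_{\tau}[\alpha]\subseteq\mathcal{U}_{1}[\alpha]$ for $\tau<1$,'' which again has the inclusion reversed: for $\tau<1$ one has $\mathcal{U}_{1}[\alpha]\subseteq\mathcal{U}_{\tau}[\alpha]$. Second, even with the inclusion corrected, a lower bound on $\dim_{\rm H}(\mathcal{U}_{\tau}[\alpha])$ for the \emph{larger} sets $\mathcal{U}_{\tau}[\alpha]$ ($\tau<1$) gives no information whatsoever about the smaller set $\mathcal{U}_{1}[\alpha]$. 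The endpoint $\tau=1$ of Lemma~\ref{93} only yields $\dim_{\rm H}(\mathcal{U}_{1}[\alpha])\le\frac{2}{w(\alpha)+1}$ directly, a factor of $2$ too weak; no part of the statement for $\tau=1$ follows from Lemma~\ref{93} alone. That is precisely where the growth hypothesis $\liminf_{k}\frac{\log q_{k+1}}{\log q_{k}}>1$ enters, and it enters on \emph{both} outer inequalities, not just one as your final paragraph suggests. To your credit, that final paragraph correctly identifies the mechanism: under the growth hypothesis, for $\tau$ in a punctured neighbourhood of $1$ the selection condition in Lemma~\ref{92} is satisfied by all $q_{k}$ with $k$ large, so the exact formula reduces (via $\langle q_{k}\alpha\rangle\asymp q_{k+1}^{-1}$) to a $\liminf$ in the $\log q_{k}$. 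The paper then bounds the tail sum $\sum_{j<k}\log q_{j}$ above (for $\tau<1$, giving the upper bound on $\mathcal{U}_{1}[\alpha]$) and below (for $\tau>1$, giving the lower bound on the union) by $\log q_{k}\cdot\sum_{j\ge1}(w'-\epsilon)^{-j}$, with $w'=\liminf_{k}\frac{\log q_{k+1}}{\log q_{k}}$, then lets $\tau\to1^{\mp}$. Fixing the inclusion direction, dropping the flawed ``lower bound on $\mathcal{U}_{1}$'' step, and carrying out the Lemma~\ref{92} computation for both $\tau\to1^{-}$ and $\tau\to1^{+}$ would turn your sketch into a correct proof along essentially the paper's lines.
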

\begin{proof}[\rm \textbf{Proof}]
We will use the following two important properties of continued fraction. \\
Property 1:
\begin{equation}\label{101}
\frac{1}{2q_{k+1}}<\frac{1}{q_{k+1}+q_{k}}<\langle q_{k}\alpha\rangle<\frac{1}{q_{k+1}}, \quad\ \forall\ k\in\mathbb{N}.
\end{equation}
Property 2:
\begin{equation*}
\langle q_{k}\alpha\rangle\leq\langle n\alpha\rangle,\ \forall\ k\in\mathbb{N}\ {\rm and}\ \forall\ 1\leq n<q_{k+1}.
\end{equation*}
The proof of properties 1 and 2 can be found in \cite[Chapters \Rmnum{1} and \Rmnum{2}]{AK3}. By the definition of $w(\alpha)$ and the above two properties, we immediately obtain that
\begin{equation}\label{103}
w(\alpha)=\limsup_{k\to+\infty}\frac{\log q_{k+1}}{\log q_{k}}.
\end{equation}
Then we will prove the upper and lower bound in Lemma $\ref{304}$ coincide. We consider two cases. If $w(\alpha)=+\infty$, it follows from Lemma $\ref{93}$ that
\begin{equation*}
\dim_{\rm H}(\mathcal{U}_{1}[\alpha])\leq\frac{2}{w(\alpha)+1}=0.
\end{equation*}
Therefore
\begin{equation*}
\dim_{\rm H}\left(\bigcup_{\tau>1}\mathcal{U}_{\tau}[\alpha]\right)=\dim_{\rm H}(\mathcal{U}_{1}[\alpha])=0.
\end{equation*}
If $w(\alpha)<+\infty$, by $\eqref{103}$, we know that $\liminf\limits_{k\to+\infty}\frac{\log q_{k+1}}{\log q_{k}}<+\infty$. For simplicity's sake, we denote $w'=\liminf\limits_{k\to+\infty}\frac{\log q_{k+1}}{\log q_{k}}$. Fix $\epsilon>0$ with $w'-\epsilon>1$, since $\liminf\limits_{k\to+\infty}\frac{\log q_{k+1}}{\log q_{k}}>w'-\epsilon$, we have
\begin{equation*}
\frac{\log q_{k+1}}{\log q_{k}}>w'-\epsilon, \quad\ \forall\ k\gg1.
\end{equation*}
Here and throughout, ``$k\gg1$'' stands for ``$k$ large enough''. Therefore,
\begin{equation}\label{111}
(w'-\epsilon)^{-1}\log q_{k+1}>\log q_{k}, \quad\ \forall\ k\gg1.
\end{equation}
Then we prove that
\begin{equation*}
\dim_{\rm H}(\mathcal{U}_{\tau}[\alpha])=\liminf_{k\to+\infty}\frac{\frac{1}{\tau}\log q_{k}+(\frac{1}{\tau}-1)\sum_{j=2}^{k-1}\log q_{j}}{\log q_{k}+\log q_{k+1}},\ \forall\ \tau\in\left(\frac{1}{w'},1\right)\cup(1,w').
\end{equation*}
For any $\tau\in(\frac{1}{w'},1)$, by Lemma $\ref{92}$, we know that
\begin{equation*}
\dim_{\rm H}(\mathcal{U}_{\tau}[\alpha])= \liminf\limits_{k\to+\infty}\frac{\log\left(n_{k}^{1+\frac{1}{\tau}}\prod_{j=1}^{k-1}n_{j}^{\frac{1}{\tau}}\langle n_{j}\alpha\rangle\right)}{\log(n_{k}\langle n_{k}\alpha\rangle^{-1})},
\end{equation*}
where $(n_{k})_{k=1}^{\infty}$ is the maximal subsequence of $(q_{k})_{k=1}^{\infty}$ such that $n_{k}\langle n_{k}\alpha\rangle^{\tau}<1$. Since
\begin{equation*}
\frac{1}{\tau}<w'=\liminf\limits_{k\to+\infty}\frac{\log q_{k+1}}{\log q_{k}},
\end{equation*}
we have
\begin{equation*}
q_{k}<q_{k+1}^{\tau}, \quad\ \forall\ k\gg1.
\end{equation*}
This together with $\eqref{101}$ gives
\begin{equation*}
q_{k}\langle q_{k}\alpha\rangle^{\tau}<q_{k}q_{k+1}^{-\tau}<q_{k+1}^{\tau}q_{k+1}^{-\tau}=1.
\end{equation*}
for all $k$ large enough. Thus,
\begin{equation*}
\begin{aligned}
&\quad\dim_{\rm H}(\mathcal{U}_{\tau}[\alpha])=\liminf\limits_{k\to+\infty}\frac{\log(q_{k}^{1+\frac{1}{\tau}}\prod_{j=1}^{k-1}q_{j}^{\frac{1}{\tau}}\langle q_{j}\alpha\rangle)}{\log(q_{k}\langle q_{k}\alpha\rangle^{-1})}\\
&=\liminf\limits_{k\to+\infty}\frac{\log\left(q_{k}^{1+\frac{1}{\tau}}\prod_{j=1}^{k-1}q_{j}^{\frac{1}{\tau}}q_{j+1}^{-1}\right)}{\log(q_{k}q_{k+1})}
=\liminf_{k\to+\infty}\frac{\frac{1}{\tau}\log q_{k}+(\frac{1}{\tau}-1)\sum_{j=2}^{k-1}\log q_{j}}{\log q_{k}+\log q_{k+1}}.
\end{aligned}
\end{equation*}
The second equality is due to $\eqref{101}$ and the super-exponentially increasing of $(q_{k})_{k=1}^{\infty}$. Similarly, for every $\tau\in(1,w')$, we have
\begin{equation*}
\dim_{\rm H}(\mathcal{U}_{\tau}[\alpha])=\liminf_{k\to+\infty}\frac{\frac{1}{\tau}\log q_{k}+(\frac{1}{\tau}-1)\sum_{j=2}^{k-1}\log q_{j}}{\log q_{k}+\log q_{k+1}}.
\end{equation*}
It follows from ($\ref{111}$) that for each $\tau\in(\frac{1}{w'},1)$,
\begin{equation*}
\begin{aligned}
\dim_{\rm H}(\mathcal{U}_{\tau}[\alpha])&=\liminf_{k\to+\infty}\frac{\frac{1}{\tau}\log q_{k}+(\frac{1}{\tau}-1)\sum_{j=2}^{k-1}\log q_{j}}{\log q_{k}+\log q_{k+1}}\\&\leq\liminf_{k\to+\infty}\frac{\frac{1}{\tau}\log q_{k}+(\frac{1}{\tau}-1)\log q_{k}\sum_{j=1}^{k-2}(w'-\epsilon)^{-j}}{\log q_{k}+\log q_{k+1}}\\&=\frac{\frac{1}{\tau}+(\frac{1}{\tau}-1)\frac{1}{w'-\epsilon-1}}{1+w(\alpha)}.
\end{aligned}
\end{equation*}
Furthermore, by ($\ref{111}$), for every $\tau\in(1,w')$,
\begin{equation*}
\begin{aligned}
\dim_{\rm H}(\mathcal{U}_{\tau}[\alpha])&=\liminf_{k\to+\infty}\frac{\frac{1}{\tau}\log q_{k}+(\frac{1}{\tau}-1)\sum_{j=2}^{k-1}\log q_{j}}{\log q_{k}+\log q_{k+1}}\\&\geq\liminf_{k\to+\infty}\frac{\frac{1}{\tau}\log q_{k}+(\frac{1}{\tau}-1)\log q_{k}\sum_{j=1}^{k-2}(w'-\epsilon)^{-j}}{\log q_{k}+\log q_{k+1}}\\&=\frac{\frac{1}{\tau}+(\frac{1}{\tau}-1)\frac{1}{w'-\epsilon-1}}{1+w(\alpha)}.
\end{aligned}
\end{equation*}
Since $\mathcal{U}_{1}[\alpha]\subset\mathcal{U}_{\tau}[\alpha]$ for any $\tau\in(\frac{1}{w'},1)$, we have
\begin{equation*}
\dim_{\rm H}(\mathcal{U}_{1}[\alpha])\leq\dim_{\rm H}(\mathcal{U}_{\tau}[\alpha])\leq\frac{\frac{1}{\tau}+(\frac{1}{\tau}-1)\frac{1}{w'-\epsilon-1}}{1+w(\alpha)},\ \forall\ \tau\in\left(\frac{1}{w'},1\right).
\end{equation*}
Letting $\tau\to1^{-}$, we obtain that
\begin{equation}\label{112}
\dim_{\rm H}(\mathcal{U}_{1}[\alpha])\leq\frac{1}{w(\alpha)+1}.
\end{equation}
Because $\mathcal{U}_{\tau}[\alpha]\subset\bigcup\limits_{\tau>1}\mathcal{U}_{\tau}[\alpha]$ for each $\tau\in(1,w')$, we have
\begin{equation*}
\dim_{\rm H}\left(\bigcup_{\tau>1}\mathcal{U}_{\tau}[\alpha]\right)\geq\dim_{\rm H}(\mathcal{U}_{\tau}[\alpha])\geq\frac{\frac{1}{\tau}+(\frac{1}{\tau}-1)\frac{1}{w'-\epsilon-1}}{1+w(\alpha)},\ \forall\ \tau\in(1,w').
\end{equation*}
Letting $\tau\to 1^{+}$, we obtain that
\begin{equation}\label{113}
\dim_{\rm H}\left(\bigcup_{\tau>1}\mathcal{U}_{\tau}[\alpha]\right)\geq\frac{1}{w(\alpha)+1}.
\end{equation}
The combination of ($\ref{112}$) and ($\ref{113}$) gives $$\dim_{\rm H}\left(\bigcup_{\tau>1}\mathcal{U}_{\tau}[\alpha]\right)=\dim_{\rm H}(\mathcal{U}_{1}[\alpha])=\frac{1}{w(\alpha)+1}.$$
\textrm{}
\end{proof}
Now, we are in a position to prove Theorem $\ref{120}$.
\begin{proof}[\rm \textbf{Proof of Theorem $\ref{120}$}]
Denote
\begin{equation*}
\Gamma_{\alpha}(1,1)=\{\gamma\in[0,1): S_{1}(\alpha,\gamma)<+\infty\}.
\end{equation*}
Recall that $S_{1}(\alpha,\gamma)=\sum\limits_{Q=1}^{+\infty}\min\limits_{1\leq q\leq Q}\langle q\alpha-\gamma\rangle$. By Corollary $\ref{41}$, we have
\begin{equation*}
\Omega_{\alpha}(1,1)=\Gamma_{\alpha}(1,1)\setminus\bigcup_{q\in\mathbb{N}}\bigcup_{p\in\mathbb{Z}}\{q\alpha+p\}.
\end{equation*}
Furthermore, it follows from Lemma $\ref{54}$ that
\begin{equation}\label{105}
\bigcup_{\tau>1}\mathcal{U}_{\tau}[\alpha]\subset\Gamma_{\alpha}(1,1)\subset\mathcal{U}_{1}[\alpha].
\end{equation}
By the countable stability of Hausdorff dimension and the fact that every countable set has Hausdorff dimension zero, we have
\begin{equation}\label{106}
\dim_{\rm H}(\Gamma_{\alpha}(1,1))=\dim_{\rm H}(\Omega_{\alpha}(1,1)).
\end{equation}
Combining ($\ref{105}$) and ($\ref{106}$), we obtain that
\begin{equation}\label{107}
\dim_{\rm H}\left(\bigcup_{\tau>1}\mathcal{U}_{\tau}[\alpha]\right)\leq\dim_{\rm H}(\Omega_{\alpha}(1,1))\leq\dim_{\rm H}(\mathcal{U}_{1}[\alpha]).
\end{equation}
(\rmnum{1}) The combination of ($\ref{107}$) and Lemma $\ref{93}$ gives
\begin{equation*}
\dim_{\rm H}(\Omega_{\alpha}(1,1))\leq\frac{2}{w(\alpha)+1}.
\end{equation*}
(\rmnum{2})
The upshot of ($\ref{107}$) and Lemma $\ref{304}$ is that $$\dim_{\rm H}(\Omega_{\alpha}(1,1))=\dim_{\rm H}\left(\bigcup_{\tau>1}\mathcal{U}_{\tau}[\alpha]\right)=\dim_{\rm H}(\mathcal{U}_{1}[\alpha])=\frac{1}{w(\alpha)+1}.$$
\textrm{}
\end{proof}

\section{Proof of Theorem $\ref{12}$}\label{500}
\begin{proof}[\rm \textbf{Proof of Theorem $\ref{12}$}]
Firstly, we show that
\begin{equation*}
([0,1)^{m\times n}\times[0,1)^{m})\setminus\boldsymbol{\rm Bad}(m,n)\subset\Lambda(m,n).
\end{equation*}
For each $(A,\boldsymbol{\gamma})\notin\boldsymbol{\rm Bad}(m,n)$, we have
\begin{equation*}
\liminf\limits_{\boldsymbol{q}\in\mathbb{Z}^{n},\|\boldsymbol{q}\|\to+\infty}\|\boldsymbol{q}\|^{n}\langle A\boldsymbol{q}-\boldsymbol{\gamma} \rangle^{m}=0.
\end{equation*}
So there exists a sequence $\{\boldsymbol{q}_{i}\}_{i=1}^{\infty}\subset \mathbb{Z}^{n}\setminus\{\boldsymbol{0}\}$ with $\|\boldsymbol{q}_{i}\|<\|\boldsymbol{q}_{i+1}\|$, satisfies
\begin{equation}\label{21}
\|\boldsymbol{q}_{i}\|^{n}\langle A\boldsymbol{q}_{i}-\boldsymbol{\gamma}\rangle^{m}<\frac{1}{2^{i}}
\end{equation}
for any $i\in\mathbb{N}$. Denote $\boldsymbol{q}_{0}=\boldsymbol{0}$. Define
\begin{equation}\label{22}
\psi(q)=\left(\frac{1}{2^{i+1}\|\boldsymbol{q}_{i+1}\|^{n}}\right)^{\frac{1}{m}}
\end{equation}
if $\|\boldsymbol{q}_{i}\|<q\leq\|\boldsymbol{q}_{i+1}\|$ for some $i\geq0$.
We know that the real positive function $\psi$ satisfies the following:
\begin{enumerate}[(1)]
\item $\psi$ is decreasing;
\item \begin{equation*}
  \begin{aligned}
 \sum\limits_{q=1}^{+\infty}q^{n-1}\psi(q)^{m}&=\sum\limits_{i=0}^{+\infty}\sum\limits_{q=\|\boldsymbol{q}_{i}\|+1}^{\|\boldsymbol{q}_{i+1}\|}q^{n-1}\psi(q)^{m}\\
 &\leq\sum\limits_{i=0}^{+\infty}\sum\limits_{q=\|\boldsymbol{q}_{i}\|+1}^{\|\boldsymbol{q}_{i+1}\|}\frac{1}{2^{i+1}\|\boldsymbol{q}_{i+1}\|} \\
 &\leq\sum\limits_{i=0}^{+\infty}\frac{1}{2^{i+1}}=1;
  \end{aligned}
 \end{equation*}
\item $\langle A\boldsymbol{q_{i}}-\boldsymbol{\gamma}\rangle<\psi(\|\boldsymbol{q_{i}}\|)$ for any $i\geq1$ by \eqref{21} and \eqref{22}.
\end{enumerate}
The above (1) and (2) gives that $\psi\in\mathcal{C}$, which implies that $(A,\boldsymbol{\gamma})\in W_{m,n}(\psi)$ together with (3). Thus $(A,\boldsymbol{\gamma})\in\Lambda(m,n)$. \\ Secondly, we prove that
\begin{equation*}
\Lambda(m,n)\subset([0,1)^{m\times n}\times[0,1)^{m})\setminus\boldsymbol{\rm Bad}(m,n).
\end{equation*}
For any $(A,\boldsymbol{\gamma})\in\Lambda(m,n)$, we have that there exists a decreasing function $\psi$ with $\sum\limits_{q=1}^{+\infty}q^{n-1}\psi(q)^{m}<+\infty$, such that $\langle A\boldsymbol{q}-\boldsymbol{\gamma}\rangle<\psi(\|\boldsymbol{q}\|)$ for infinitely many $\boldsymbol{q}\in\mathbb{Z}^{n}$. By Lemma $\ref{54}$, for any $\epsilon>0$, we have $\psi(q)^{m}<\epsilon\cdot q^{-n}$ whenever $q$ is large enough. Therefore we have
\begin{equation*}
\langle A\boldsymbol{q}-\boldsymbol{\gamma}\rangle^{m}<\epsilon \|\boldsymbol{q}\|^{-n}
\end{equation*}
for infinitely many $\boldsymbol{q}\in\mathbb{Z}^{n}$. It means that
\begin{equation*}
\liminf_{\boldsymbol{q}\in\mathbb{Z}^{n},\|\boldsymbol{q}\|\to+\infty}\|\boldsymbol{q}\|^{n}\langle A\boldsymbol{q}-\boldsymbol{\gamma}\rangle^{m}\leq\epsilon.
\end{equation*}
By the arbitrariness of $\epsilon$, we obtain that
\begin{equation*}
\liminf_{\boldsymbol{q}\in\mathbb{Z}^{n},\|\boldsymbol{q}\|\to+\infty}\|\boldsymbol{q}\|^{n}\langle A\boldsymbol{q}-\boldsymbol{\gamma}\rangle^{m}=0.
\end{equation*}
Hence, $(A,\boldsymbol{\gamma})\notin\boldsymbol{\rm Bad}(m,n)$.
\textrm{}
\end{proof}

\section{Proof of Theorem $\ref{14}$ and Corollary $\ref{15}$}\label{600}
In order to prove Theorem $\ref{14}$ and Corollary $\ref{15}$, we need the following lemma, which is called Baire category theorem.
\begin{lemma}[Baire category theorem]\label{16}
Assume that $(X,d)$ is a complete metric space. If $\{U_{n}\}_{n=1}^{\infty}$ is a sequence of dense and open sets in $X$, then $\bigcap\limits_{n=1}^{+\infty}U_{n}$ is dense in $X$. In particular, $X$ is of second category.
\end{lemma}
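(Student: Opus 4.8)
The plan is to prove the density statement by the classical nested ball argument and then deduce the second category assertion as an immediate consequence. First I would fix an arbitrary nonempty open set $W\subseteq X$ and aim to produce a point lying in $W\cap\bigcap_{n=1}^{\infty}U_{n}$; since $W$ is arbitrary, this shows that $\bigcap_{n=1}^{\infty}U_{n}$ meets every nonempty open set, i.e. is dense in $X$.

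To build such a point I would construct inductively a nested sequence of closed balls $\overline{B}(x_{n},r_{n})$. Since $U_{1}$ is dense and open, the set $W\cap U_{1}$ is nonempty and open, hence contains a closed ball $\overline{B}(x_{1},r_{1})$ with $0<r_{1}<1$. Given $x_{n}$ and $r_{n}$, the set $B(x_{n},r_{n})\cap U_{n+1}$ is nonempty (density of $U_{n+1}$) and open (openness of $U_{n+1}$), so it contains a closed ball $\overline{B}(x_{n+1},r_{n+1})$ with $0<r_{n+1}<r_{n}/2$ and $\overline{B}(x_{n+1},r_{n+1})\subseteq B(x_{n},r_{n})\cap U_{n+1}$. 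Then $r_{n}<2^{-n}$, and for all $m\geq n$ one has $x_{m}\in\overline{B}(x_{n},r_{n})$, so $(x_{n})_{n\geq1}$ is Cauchy; by completeness of $(X,d)$ it converges to some $x\in X$. Since each $\overline{B}(x_{n},r_{n})$ is closed and contains $x_{m}$ for all $m\geq n$, it contains the limit $x$. Hence $x\in\overline{B}(x_{1},r_{1})\subseteq W\cap U_{1}$ and $x\in\overline{B}(x_{n+1},r_{n+1})\subseteq U_{n+1}$ for every $n\geq1$, so $x\in W\cap\bigcap_{n=1}^{\infty}U_{n}$, which is what we wanted.

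For the last assertion, I would argue by contradiction: suppose $X$ is of first category, so $X=\bigcup_{n=1}^{\infty}F_{n}$ with each $F_{n}$ nowhere dense. Replacing $F_{n}$ by $\overline{F_{n}}$ we may assume each $F_{n}$ is closed with empty interior, whence $U_{n}:=X\setminus F_{n}$ is open and dense. By the part just proved, $\bigcap_{n=1}^{\infty}U_{n}$ is dense in $X$, in particular nonempty; but $\bigcap_{n=1}^{\infty}U_{n}=X\setminus\bigcup_{n=1}^{\infty}F_{n}=\emptyset$, a contradiction. Therefore $X$ is of second category.

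There is no genuine obstacle here — this is the textbook proof of the Baire category theorem. The only point demanding a little care is to work with \emph{closed} balls rather than open ones and to force the radii to tend to $0$, so that the limit of the centers is trapped inside every ball of the chain; once this bookkeeping is in place, completeness does the rest.
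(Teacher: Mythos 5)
Your proof is correct, and it is the standard nested closed ball argument for the Baire category theorem. The paper itself does not present a proof of this lemma; it simply records it as a classical fact and cites Munkres' \emph{Topology} (Theorem 48.2). So there is nothing in the paper to compare against line by line; your argument is exactly the textbook proof one would find in that reference. The only cosmetic slip is the radius bound: with $r_{1}<1$ and $r_{n+1}<r_{n}/2$ one gets $r_{n}<2^{-(n-1)}$ rather than $r_{n}<2^{-n}$, but this is immaterial since all that matters is $r_{n}\to 0$. One could also note that the ``in particular'' clause implicitly uses that $X$ is nonempty (otherwise a dense set may be empty), but this is a standing convention and does not affect the argument.
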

\begin{remark}
The proof of Baire category theorem can be found in \cite[Theorem 48.2]{JRM}.
\end{remark}
\begin{proof}[\rm \textbf{Proof of Theorem $\ref{14}$}]
(\rmnum{1}) Firstly, we prove that $\mathcal{C}(A,\boldsymbol{\gamma})$ is a $G_{\delta}$ set. In order to show that $\mathcal{C}(A,\boldsymbol{\gamma})$ is a $G_{\delta}$ set, we can write the set $\mathcal{C}(A,\boldsymbol{\gamma})$ as
\begin{equation*}
\mathcal{C}(A,\boldsymbol{\gamma})=\bigcap\limits_{k=1}^{+\infty}\mathcal{C}_{k}(A,\boldsymbol{\gamma}),
\end{equation*}
where
\begin{equation*}
\mathcal{C}_{k}(A,\boldsymbol{\gamma}):=\{\psi\in\mathcal{C}:{\rm the\ number\ of}\ \boldsymbol{q}\in\mathbb{Z}^{n}\ {\rm such\ that}\ \langle A \boldsymbol{q}-\boldsymbol{\gamma}\rangle<\psi(\|\boldsymbol{q}\|)\ {\rm is\ at\ least}\ k\}.
\end{equation*}
The left is to show that $\mathcal{C}_{k}(A,\boldsymbol{\gamma})$ is an open set in $\mathcal{C}$ for any $k\geq1$. In fact, for any $\psi\in\mathcal{C}_{k}(A,\boldsymbol{\gamma})$, there exists $\boldsymbol{q}_{1},\ldots,\boldsymbol{q}_{k}$ with $\langle A\boldsymbol{q}_{i}-\boldsymbol{\gamma}\rangle<\psi(\|\boldsymbol{q}_{i}\|)$, for all $1\leq i\leq k$. Let
\begin{equation*}
\epsilon=\frac{1}{2}\min\{\psi(\|\boldsymbol{q}_{i}\|)-\langle A\boldsymbol{q}_{i}-\boldsymbol{\gamma}\rangle: 1\leq i\leq k\}.
\end{equation*}
Since function $x\mapsto x^{\frac{1}{m}}, x\in[0,+\infty)$ is uniformly continuous, there exists $\delta>0$, such that for any $\varphi\in B(\psi, \delta)$, where $B(\psi,\delta)=\{\varphi\in\mathcal{C}: d(\varphi,\psi)<\delta\}$, we have
\begin{equation*}
|\varphi(\|\boldsymbol{q}_{i}\|)-\psi(\|\boldsymbol{q}_{i}\|)|<\epsilon
\end{equation*}
for each $1\leq i\leq k$. Therefore
\begin{equation*}
\varphi(\|\boldsymbol{q}_{i}\|)>\psi(\|\boldsymbol{q}_{i}\|)-\epsilon>\langle A\boldsymbol{q}_{i}-\boldsymbol{\gamma}\rangle
\end{equation*}
for every $1\leq i\leq k$, which means that $\varphi\in\mathcal{C}_{k}(A,\boldsymbol{\gamma})$. By the arbitrariness of $\varphi$, we obtain that $B(\psi,\delta)\subset\mathcal{C}_{k}(A,\boldsymbol{\gamma})$. Thus, $\mathcal{C}_{k}(A,\boldsymbol{\gamma})$ is an open set in $\mathcal{C}$. \\ Secondly, we show that $\mathcal{C}(A,\boldsymbol{\gamma})$ is dense in $\mathcal{C}$. That is, for any $\psi\in\mathcal{C}$ and any $\epsilon>0$, we need to prove that $B(\psi,\epsilon)\cap\mathcal{C}(A,\boldsymbol{\gamma})\neq\emptyset$. Because
\begin{equation*}
\liminf\limits_{\boldsymbol{q}\in\mathbb{Z}^{n},\|\boldsymbol{q}\|\to+\infty}\|\boldsymbol{q}\|^{n}\langle A\boldsymbol{q}-\boldsymbol{\gamma}\rangle^{m}=0,
\end{equation*}
there exists a sequence $\{\boldsymbol{q}_{i}\}_{i=1}^{\infty}\subset \mathbb{Z}^{n}\setminus\{\boldsymbol{0}\}$, such that
\begin{equation}\label{23}
\|\boldsymbol{q}_{i}\|^{n}\langle A\boldsymbol{q}_{i}-\boldsymbol{\gamma}\rangle^{m}<\frac{\epsilon}{2^{i}}
\end{equation}
for every $i\geq1$. Let $\boldsymbol{q}_{0}=\boldsymbol{0}$, define
\begin{equation}\label{24}
\varphi(q)=\left(\psi(q)^{m}+\frac{\epsilon}{2^{i+1}\|\boldsymbol{q}_{i+1}\|^{n}}\right)^{\frac{1}{m}},
\end{equation}
if $\|\boldsymbol{q}_{i}\|<q\leq\|\boldsymbol{q}_{i+1}\|$ for some $i\geq0$. We know that the real positive function $\varphi$ satisfies the following:
\begin{enumerate}[(1)]
\item $\varphi$ is decreasing;
\item \begin{equation*}
    \begin{aligned}
    \sum\limits_{q=1}^{+\infty}q^{n-1}|\varphi(q)^{m}-\psi(q)^{m}|&=\sum\limits_{i=0}^{+\infty}\sum\limits_{q=\|\boldsymbol{q}_{i}\|+1}^{\|\boldsymbol{q}_{i+1}\|}q^{n-1}|\varphi(q)^{m}-\psi(q)^{m}|\\
    &\leq\sum\limits_{i=0}^{+\infty}\sum\limits_{q=\|\boldsymbol{q}_{i}\|+1}^{\|\boldsymbol{q}_{i+1}\|}\frac{\epsilon}{2^{i+1}\|\boldsymbol{q}_{i+1}\|}\\ &< \sum\limits_{i=0}^{+\infty}\frac{\epsilon}{2^{i+1}}=\epsilon;
    \end{aligned}
  \end{equation*}
\item
$\langle A\boldsymbol{q_{i}}-\boldsymbol{\gamma}\rangle<\varphi(\|\boldsymbol{q_{i}}\|)$ for each $i\geq1$ by \eqref{23} and \eqref{24}.
\end{enumerate}
The above (1) and (2) give that $\varphi\in B(\psi,\epsilon)$, which implies that $\varphi\in B(\psi,\epsilon)\cap\mathcal{C}(A,\boldsymbol{\gamma})$. Thus, $\mathcal{C}(A,\boldsymbol{\gamma})$ is dense in $\mathcal{C}$ by the arbitrariness of $\psi$ and $\epsilon$.


(\rmnum{2}) In order to prove that $\mathcal{C}(A,\boldsymbol{\gamma})$ is not a $F_{\sigma}$ set, we show that ${\rm Int}(\mathcal{C}(A,\boldsymbol{\gamma}))=\emptyset$,
where ${\rm Int}(\mathcal{C}(A,\boldsymbol{\gamma}))$ denote the set of all interior points of $\mathcal{C}(A,\boldsymbol{\gamma})$. For any $\psi\in\mathcal{C}(A,\boldsymbol{\gamma})$ and any $\epsilon>0$, we would construct a function $\varphi\in B(\psi,\epsilon)\setminus\mathcal{C}(A,\boldsymbol{\gamma})$. Since $\sum\limits_{q=1}^{+\infty}q^{n-1}\psi(q)^{m}<+\infty$, there exists $N\geq2$, such that $\sum\limits_{q=N}^{+\infty}q^{n-1}\psi(q)^{m}<\epsilon$. Define
\begin{equation}\label{25}
\begin{aligned}
\varphi(q)=\begin{cases} \psi(q), &\text{if}\ q<N, \\  0, &\text{if}\ q\geq N. \end{cases}
\end{aligned}
\end{equation}
We know that the real non-negative function $\varphi$ satisfies the following:
\begin{enumerate}[(1)]
\item $\varphi$ is decreasing;
\item \begin{equation*}
 \begin{aligned}
 \sum\limits_{q=1}^{+\infty}q^{n-1}\left|\psi(q)^{m}-\varphi(q)^{m}\right|&=\sum\limits_{q=N}^{+\infty}q^{n-1}\left[\psi(q)^{m}-\varphi(q)^{m}\right] \\
 &=\sum\limits_{q=N}^{+\infty}q^{n-1}\psi(q)^{m}<\epsilon.
 \end{aligned}
\end{equation*}
\item $\varphi(\|\boldsymbol{q}\|)\leq\langle A\boldsymbol{q}-\boldsymbol{\gamma}\rangle$ for all $\boldsymbol{q}\in\mathbb{Z}^{n}$ with $\|\boldsymbol{q}\|\geq N$ by $\eqref{25}$.
\end{enumerate}
By (1) and (2), we immediately obtain that $\varphi\in B(\psi,\epsilon)$, which implies that $\varphi\in B(\psi,\epsilon)\setminus\mathcal{C}(A,\boldsymbol{\gamma})$ together with (3). It follows that $\psi\notin{\rm Int}(\mathcal{C}(A,\boldsymbol{\gamma}))$. By the arbitrariness of $\psi$, we obtain that ${\rm Int}(\mathcal{C}(A,\boldsymbol{\gamma}))=\emptyset$. Finally, we prove the conclusion by contradiction. Suppose that $\mathcal{C}(A,\boldsymbol{\gamma})$ is a $F_{\sigma}$ set in $\mathcal{C}$, then
\begin{equation*}
\mathcal{C}(A,\boldsymbol{\gamma})=\bigcup_{i=1}^{+\infty}F_{i},
\end{equation*}
where $\{F_{i}\}_{i=1}^{\infty}$ is a sequence of closed sets in $\mathcal{C}$. Since ${\rm Int}(\mathcal{C}(A,\boldsymbol{\gamma}))=\emptyset$, we obtain that ${\rm Int}(F_{i})=\emptyset$ for each $i\in\mathbb{N}$, which implies that $\mathcal{C}(A,\boldsymbol{\gamma})$ is of first category. However, by Theorem $\ref{14}$ (\rmnum{3}), we know that $\mathcal{C}(A,\boldsymbol{\gamma})$ is of second category, we get a contradiction. Therefore, $\mathcal{C}(A,\boldsymbol{\gamma})$ is not a $F_{\sigma}$ set in $\mathcal{C}$.
\textrm{}
\end{proof}

\begin{proof}[\rm \textbf{Proof of Corollary $\ref{15}$}]
For every $i\geq1$, we write $\mathcal{C}(A_{i},\boldsymbol{\gamma}_{i})$ as
\begin{equation*}
\mathcal{C}(A_{i},\boldsymbol{\gamma}_{i})=\bigcap_{k=1}^{+\infty}\mathcal{C}_{k}(A_{i},\boldsymbol{\gamma}_{i}),
\end{equation*}
where
\begin{equation*}
\mathcal{C}_{k}(A_{i},\boldsymbol{\gamma}_{i}):=\{\psi\in\mathcal{C}:{\rm the\ number\ of}\ \boldsymbol{q}\in\mathbb{Z}^{n}\ {\rm such\ that}\ \langle A_{i}\boldsymbol{q}-\boldsymbol{\gamma}_{i}\rangle<\psi(\|\boldsymbol{q}\|)\ {\rm is\ at\ least}\ k\}.
\end{equation*}
By Theorem $\ref{14}$, we know that $\mathcal{C}_{k}(A_{i},\boldsymbol{\gamma}_{i})$ is dense and open in $\mathcal{C}$ for all $k\geq1$. Thus,
\begin{equation*}
\bigcap_{i=1}^{+\infty}\mathcal{C}(A_{i},\boldsymbol{\gamma}_{i})=\bigcap_{i=1}^{+\infty}\bigcap_{k=1}^{+\infty}\mathcal{C}_{k}(A_{i},\boldsymbol{\gamma}_{i})
\end{equation*}
is a intersection of countable many dense and open sets in $\mathcal{C}$. By Lemma $\ref{16}$,
\begin{equation*}
\bigcap_{i=1}^{+\infty}\mathcal{C}(A_{i},\boldsymbol{\gamma}_{i})
\end{equation*}
is dense and of second category.
\textrm{}
\end{proof}

\subsection*{Acknowledgments}
The authors thank Professor Sanju Velani for constructive discussions, which are very helpful in improving the quality of the article. This work was supported by National Key R\&D Program of China (No. 2024YFA1013700), NSFC 12271176 and Guangdong Natural Science Foundation 2024A1515010946.

\end{document}